\UseRawInputEncoding  

\documentclass[11pt,a4paper,reqno]{amsart}

\usepackage[margin=2.5cm]{geometry} 
\footskip 28pt
\pagestyle{plain} 
\usepackage{setspace}
\usepackage{datetime}
\usepackage{xcolor}
\usepackage{array}
\usepackage{enumitem}
\usepackage{indentfirst}
\usepackage[normalem]{ulem}

\usepackage{amsmath,amsfonts,amssymb,mathrsfs}

\usepackage{graphicx}
\usepackage{float}      
\usepackage{makecell}

\usepackage[numbers,sort&compress]{natbib}
\bibliographystyle{plainnat}

\usepackage{subcaption}  
\captionsetup{labelfont=rm,figurename=Figure}
\captionsetup[subfigure]{justification=centering,font=small}

\usepackage{tikz}
\usetikzlibrary{%
	arrows,arrows.meta,positioning,calc,fit,patterns,plotmarks,
	shapes.geometric,shapes.misc,shapes.symbols,shapes.arrows,shapes.callouts,
	backgrounds,chains,topaths,trees,petri,mindmap,matrix,folding,fadings,through,
	scopes,decorations.fractals,decorations.shapes,decorations.text,
	decorations.pathmorphing,decorations.pathreplacing,decorations.footprints,
	decorations.markings,shadows
}

\tikzset{middlearrow/.style={
		decoration={markings, mark= at position 0.5 with {\arrow[very thick]{#1}} },
		postaction={decorate}
}}
\tikzset{lefttlearrow/.style={
		decoration={markings, mark= at position 0.7 with {\arrow[very thick]{#1}} },
		postaction={decorate}
}}
\tikzset{leftlearrow/.style={
		decoration={markings, mark= at position 0.55 with {\arrow[very thick]{#1}} },
		postaction={decorate}
}}

\usepackage{lipsum}
\makeatletter
\renewcommand\section{\@startsection{section}{1}%
	\z@{-1.0\linespacing\@plus-.7\linespacing}{1\linespacing}%
	{\normalfont\bfseries\large\centering}}
\renewcommand\subsection{\@startsection{subsection}{2}%
	\z@{-1\linespacing\@plus-.7\linespacing}{1\linespacing}%
	{\normalfont\large\bfseries\raggedright\hangindent=2em  \hangafter=1}}
\makeatother

\newtheorem{theorem}{Theorem}[section]

\newtheorem{definition}{Definition}
\newtheorem{lemma}[theorem]{Lemma}
\newtheorem{claim}{Claim}
\newtheorem{proposition}[theorem]{Proposition}
\newtheorem{observation}{Observation}

\newtheorem{problem}{Problem}


\usepackage{hyperref}
\hypersetup{
	colorlinks=true,
	linkcolor=black,
	urlcolor=black,
	citecolor=black,
	pdfborder={0 0 0}
}

\title{Arbitrary orientations of cycles in oriented graphs}

\author{Guanghui Wang}
\thanks{GHW: School of Mathematics,  Shandong University, Jinan 250100, China.   \texttt{ghwang@sdu.edu.cn}. The author was supported by National Natural Science Foundation of China (No.12231018) and State Key Laboratory of Cryptography and Digital Economy Security.}

\author{Yun Wang}
\thanks{YW: Data Science Institute, Shandong University, Jinan 250100, China. \texttt{yunwang@sdu.edu.cn}. The author was supported by China Postdoctoral Science Foundation (No. 2025M773101) and National Natural Science Foundation of China (No.12501489).}

\author{Zhiwei Zhang}
\thanks{ZWZ: Interdisciplinary Center, Shandong University, Jinan 250100, China. \texttt{zhiweizh@mail.sdu.edu.cn}.}   

\begin{document} 
	\maketitle
	\begin{abstract}
		{We show that every sufficiently large oriented graph $G$ with minimum indegree and outdegree both at least $(3|V(G)|-1)/8$ contains every orientation of a Hamilton cycle. This result improves the approximate bound established by Kelly and resolves a long-standing problem posed by H\"aggkvist and Thomason in 1995. The degree condition is tight and it can be improved to $(3|V(G)|-4)/8$ for Hamilton cycles that are nearly directed, generalizing a classic result by Keevash, K\"uhn and Osthus. Additionally, we derive a pancyclicity result for arbitrary orientations. More precisely, the above  degree condition suffices to guarantee the existence of cycles of every possible orientation and every possible length unless $G$ is isomorphic to one of the exceptional  oriented graphs. 
			
		}
	\end{abstract}
	
	\maketitle
	
	\noindent{\bf Keywords:}  \texttt{Orientation,  Hamilton cycle, pancyclicity, randomised embedding, robust outexpander}

	\section{Introduction}\label{SEC-introduction}
	
	Notation follows \cite{bang2009} so we only repeat a few definitions here (also see Section \ref{SEC-notation}). A \emph{digraph} is not allowed to have parallel edges or loops and an \emph{oriented graph} is a digraph with no cycle of length 2. The \emph{minimum semidegree} $\delta^0(G)$ of a digraph $G$ is the minimum of all the indegrees and outdegrees of the vertices in $G$. For a path or cycle in digraphs, we say that it is \emph{directed} if all its edges are oriented in the same direction and, it is \emph{antidirected} if it contains no directed path of length 2. Throughout the paper, we will write $n$ for the number of vertices of the (di)graph $G$ we are considering.

	A fundamental result of Dirac \cite{diracPLMS2} states that every graph $G$ with minimum degree $n/2$ contains a Hamilton cycle. A directed version of this result was obtained by Ghouila-Houri \cite{ghouilaCRASP25}, who showed that any digraph $G$ with minimum semidegree $\delta^0(G)\geqslant  n/2$ contains a directed  Hamilton cycle. In \cite{keevashJLMS79} Keevash, K{\"u}hn and Osthus proved that the minimum semidegree threshold for an oriented graph $G$ containing a directed Hamilton cycle turns out to be $(3n-4)/8$.
	
	Instead of asking for a directed Hamilton cycle in digraphs, one may ask under certain conditions, if it is possible to show a digraph containing every possible  orientation of a Hamilton cycle. For tournaments, Rosenfeld \cite{rosenfeldJCTB16} conjectured that the directed Hamilton cycle is the only orientation of a Hamilton cycle that can be avoided by tournaments on arbitrarily many vertices. This conjecture was settled by Thomason \cite{thomasonTAMS296} who proved that every tournament on $n\geqslant  2^{128}$ vertices contains every oriented  Hamilton cycle except possibly the directed Hamilton cycle. In \cite{havetJCTB80} Havet showed that the number of vertices above can be lowered to 68. The question for general digraphs and oriented graphs was considered by H\"aggkvist and Thomason in 1995. They   \cite{haggkvistJGT19} proved that a minimum semidegree $\delta^0(G)\geqslant  n/2+n^{5/6}$ ensures the existence of every possible orientation of a Hamilton cycle in a digraph $G$ and $\delta^0(G)\geqslant (5/12+o(1))n$  suffices if  $G$ is oriented \cite{haggkvist1997}. In \cite{debiasioEJC22},  Debiasio and Molla  got the exact minimum semidegree threshold for an antidirected Hamilton cycle in digraphs,  they showed that such a cycle exists if $\delta^0(G)\geqslant n/2+1$. Later, Debiasio, K\"uhn, Molla, Osthus and Taylor \cite{debiasioSJDM29} settled the problem by completely determining the exact threshold for every possible orientation of a Hamilton cycle in digraphs. They showed that $\delta^0(G)\geqslant  n/2$ will suffice if the orientation is not antidirected and they believe that it would be interesting to obtain an exact version of this result in oriented graphs. Kelly \cite{kellyEJC18} proved that  an oriented graph $G$ admits every possible orientation of a Hamilton cycle if $\delta^0(G)\geqslant  (3/8+o(1))n$. In this paper, we improve this approximate result by proving the following, which solves a problem of H\"aggkvist and Thomason in \cite{haggkvist1997}  (also see page 2 in \cite{haggkvistJGT19}). 
	
	\begin{theorem}\label{THM-arbitraryori}
		There exists an integer $n_0$ such that every oriented graph $G$ on $n\geqslant  n_0$ vertices with minimum semidegree  $\delta^0(G)\geqslant  (3n-1)/8$ contains every possible orientation of a Hamilton cycle.
	\end{theorem}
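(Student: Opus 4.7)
The plan is to follow the standard stability framework for exact minimum semidegree theorems: identify a small family of ``extremal'' configurations, and in each of the two resulting regimes use a different method. As a preliminary reduction, when the target orientation $C$ is the directed Hamilton cycle Keevash, K\"uhn and Osthus \cite{keevashJLMS79} already gives the result, since $(3n-1)/8>(3n-4)/8$, so I may assume $C$ contains at least one change of direction.

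First I would prove a stability lemma: there is a finite list of oriented graphs $H_1,\dots,H_k$, obtained by analysing the constructions that witness tightness at $(3n-1)/8$ (presumably balanced eight-part blow-ups with prescribed edge orientations), such that if the edit distance from $G$ to every blow-up of some $H_i$ exceeds $\varepsilon n^2$ then $G$ is a robust outexpander with linear expansion. In the non-extremal case I would then run Kelly's randomised embedding strategy at the sharper threshold: fix $C$, build an orientation-specific absorbing path capable of swallowing a small set of leftover vertices while preserving the orientation pattern of $C$, use the regularity/randomised-embedding machinery to extend this absorber into an almost-spanning path of the required orientation, and finally close up to a Hamilton cycle using robust outexpansion. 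The extra slack from $G$ being $\varepsilon$-far from extremal ensures that Kelly's $(3/8+\varepsilon)n$ argument has room to spare.

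The main obstacle, as usual for such exact theorems, will be the extremal case. Here $G$ is essentially a blow-up of some $H_i$ perturbed by $o(n^2)$ ``deviant'' edges; the rigid block structure blocks standard expansion methods, so one must instead use the deviant edges as bridges to splice the desired cycle $C$ onto the blocks. The strict inequality $\delta^0(G)\geq(3n-1)/8$ is critical here: it has to be leveraged to guarantee enough deviant edges in each block to realise every possible orientation of a Hamilton cycle in every extremal configuration $H_i$. Handling all orientations uniformly, particularly the antidirected one (where the available absorbers are most constrained), and verifying that the candidate extremal list $H_1,\dots,H_k$ is complete, will be the delicate technical heart of the argument.
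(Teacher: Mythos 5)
Your high-level frame (non-extremal $\Rightarrow$ expansion-type argument; extremal $\Rightarrow$ structure-specific embedding) matches the paper's, but what you have written is a plan rather than a proof, and the part you defer — the extremal case — is where essentially all of the content of this theorem lives. Concretely: the extremal configuration is not a ``balanced eight-part blow-up''; it is a four-part structure $(A,B,C,D)$ with $|A|,|B|,|C|,|D|\approx n/4$, $A$ and $C$ spanning near-regular tournaments, $B,D$ spanning a near-bipartite tournament, and $ABCD$ close to a blow-up of the directed $4$-cycle (the graph $G^\ast$ of Figure \ref{FIG-degreesharp}), and the dichotomy ``robust outexpander or $\varepsilon$-extremal'' at semidegree $3n/8-O(\varepsilon n)$ is itself a lemma that must be proved (Lemma \ref{LEM-extremal}). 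In the non-extremal case your justification is also off: being $\varepsilon$-far from extremal does not by itself give ``room to spare'' in Kelly's $(3/8+\varepsilon)n$ proof, since that proof uses the degree hypothesis, not distance from extremality; the correct route is that far-from-extremal plus $\delta^0\geqslant(3n-1)/8$ yields robust outexpansion, after which one can quote Taylor's theorem that robust outexpanders of linear semidegree contain every orientation of a Hamilton cycle — no new absorbing argument is needed there.

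The genuine gap is your treatment of the extremal case. Saying ``use the deviant edges as bridges'' and ``the strict inequality guarantees enough deviant edges in each block'' does not engage with the actual obstructions. First, in $G^\ast$ an antidirected path cannot use more than about $3n/4$ vertices, so for orientations with many direction changes one must locate \emph{special} edges (edges inside $E(W\cup Z,Y\cup Z)\cup E(X\cup Y,W\cup X)$); the paper's Lemma \ref{LEM-2matching} shows the exact bound $(3n-1)/8$ forces two independent special edges unless $G$ is a one-vertex extension of $G^\ast$ on $8s+3$ vertices — and in that exceptional case the Hamilton cycle cannot be antidirected because $n$ is odd, so one special edge suffices. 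Nothing in your sketch produces this counting, and it cannot be waved through: for $n=8s+6$ the graph $G^\ast$ has $\delta^0=(3n-2)/8$ and \emph{no} special edge at all, so ``enough deviant edges'' is exactly the knife-edge the constant $(3n-1)/8$ is calibrated to. Second, the embedding itself requires a case split on the number of sink vertices of $C$: when $C$ is close to directed one balances the parts and covers bad vertices with directed $12$-paths and finishes with the Blow-up Lemma; when $C$ has linearly many sinks one needs the Diregularity Lemma, a randomised embedding of segments of $C$ around triangles and matched cluster pairs, and the special-edge gadgets to pass between the $W$- and $Y$-sides. None of these mechanisms (nor substitutes for them) appear in your proposal, so as it stands the argument does not establish the theorem.
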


	The degree condition can be improved to $(3n-4)/8$ when the Hamilton cycle is close to being directed,  which generalizes the classic result by Keevash, K\"uhn and Osthus \cite{keevashJLMS79}. The following proposition shows that if the oriented Hamilton cycle has a long antidirected subpath, then the degree bound in Theorem \ref{THM-arbitraryori} is the best possible.

	\begin{proposition}\label{PROP-degreesharp}
		
		For any $n\geqslant  3$, there is an $n$-vertex oriented graph $G$  with minimum semidegree $\delta^0(G)= \lceil  (3n-1)/8\rceil-1$ which does not contain a cycle having an antidirected subpath of order larger than $3(n/4+1)$. In particular, it has no antidirected Hamilton cycle.
	\end{proposition}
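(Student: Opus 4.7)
The plan is to exhibit a concrete extremal oriented graph $G^{\ast}$ and then verify its two key properties: the exact semidegree $\delta^0(G^{\ast})=\lceil(3n-1)/8\rceil-1$, and the nonexistence of a cycle carrying an antidirected subpath on more than $3(n/4+1)$ vertices. A natural construction at the $3n/8$ threshold is a $4$-partite oriented graph in which the vertex set is partitioned into four classes $V_1, V_2, V_3, V_4$ of size approximately $n/4$ whose inter-class orientation is a blow-up of the directed $4$-cycle, together with internal near-regular oriented tournaments (or similarly near-regular oriented subgraphs) calibrated so that every vertex has in-degree and out-degree equal to $\lceil(3n-1)/8\rceil-1$. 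The exact class sizes and internal edge structure depend on the residue of $n$ modulo $8$ and require a brief case analysis.

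The first step, verifying the semidegree, is a direct counting argument: for each $v \in V_i$, one sums the external out-degree coming from the blow-up with the internal out-degree coming from the near-regular subgraph inside $V_i$; the symmetric computation for in-degrees produces the exact threshold.

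The substantive step, and the main obstacle, is the antidirected-subpath bound. Given an antidirected path $P = v_1 v_2 \ldots v_\ell$ in $G^{\ast}$, I would track the sequence of class indices $\pi(v_i) \in \mathbb{Z}_4$. Each external edge of the blow-up forces $\pi$ to shift by a fixed amount modulo $4$ (with the sign dictated by the forward/backward alternation along $P$), while each internal edge leaves $\pi$ unchanged. Since the internal vertices of an antidirected path alternate between sources (both path-edges out) and sinks (both path-edges in), the distribution of these sources and sinks across the four classes is rigidly constrained. The heart of the argument is to show that one of the four classes can contribute only $O(1)$ vertices to $P$ before the path is forced to terminate (because the alternation prevents it from completing a ``full loop'' through all four classes), so that $\ell$ is bounded above by the sizes of the remaining three classes plus a small additive constant, giving the stated bound $3(n/4+1)$. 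Any cycle containing such an antidirected subpath inherits this bound.

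The chief difficulty is that the internal near-regular tournaments introduced to raise the vertex degrees to the threshold could, in principle, allow an antidirected path to linger within a single class and thereby evade the rigidity argument based on the blow-up structure. Ruling this out requires a careful structural analysis of how the internal edges interact with the alternation of sources and sinks, and constitutes the technical heart of the proof; in particular one must verify that no mixture of inter-class and intra-class edges can simulate a ``fourth-class traversal'' that would exceed the $3(n/4+1)$ bound.
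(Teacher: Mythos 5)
Your construction is not the paper's and, more importantly, it does not work: the gap is in the choice of $G^{\ast}$ itself, not merely in the ``technical heart'' you defer. If you take a blow-up of the directed $4$-cycle on classes $V_1,\dots ,V_4$ of size about $n/4$ and insert a near-regular tournament inside \emph{every} class (which is what your degree computation forces, since each vertex needs roughly $n/8$ internal out-degree on top of the $n/4$ supplied by the blow-up), then the resulting graph contains antidirected paths of order $n-O(1)$, far exceeding $3(n/4+1)$. Concretely, build $u_1w_1u_2w_2u_3w_3\cdots$ with the $u_i$ sources and the $w_i$ sinks, where $u_1\in V_1$, $w_1\in V_2$, $u_2\in V_2$, $w_2\in V_3$, $u_3\in V_3$, $w_3\in V_4,\dots$: each edge $u_i\to w_i$ is a blow-up edge into the next class (always present, so $w_i$ may be any unused vertex there), and each edge $u_{i+1}\to w_i$ is an internal edge of that class, which you can always find as long as at least two vertices of the class are unused (choose $w_i$ to be an unused vertex having an unused internal in-neighbour; every tournament on at least two vertices has an edge). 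This antidirected path winds through all four classes and exhausts almost every vertex, so the rigidity you hope for --- that one class contributes only $O(1)$ vertices because the path ``cannot complete a full loop'' --- is false for your graph: the internal edges allow the class index of the sources to drift around $\mathbb{Z}_4$, which is exactly the scenario you flag as the chief difficulty and then hope to rule out.

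The paper's extremal graph is structurally asymmetric in a way your proposal misses: only the two opposite classes $A$ and $C$ carry (almost regular) internal tournaments, while $B$ and $D$ are independent sets and the missing degree there is supplied by an almost regular \emph{bipartite tournament between $B$ and $D$} (edges in both directions), on top of the cyclic pattern $A\to B\to C\to D\to A$, with class sizes tuned according to $n\bmod 8$ so that $\delta^0(G^{\ast})=\lceil (3n-1)/8\rceil-1$ exactly. With this structure the antidirected bound is a two-line observation rather than a delicate analysis: $N^+(A\cup D)\subseteq A\cup B$ and $N^-(A\cup B)\subseteq A\cup D$, so an antidirected path whose first edge leaves a vertex of $A\cup D$ alternates between $A\cup D$ (odd positions) and $A\cup B$ (even positions) and never meets $C$; symmetrically a path started in $B\cup C$ never meets $A$. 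Hence every antidirected path has order at most $3\lceil n/4\rceil\leqslant 3(n/4+1)$, and the statement about cycles (and the nonexistence of an antidirected Hamilton cycle) follows at once. To make your argument correct you must replace your construction by one with this asymmetry between the two pairs of opposite classes; no calibration of internal tournaments in all four classes can satisfy the antidirected-subpath bound.
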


	Let $G$ be the oriented graph depicted in Figure \ref{FIG-degreesharp}. We now  briefly prove Proposition \ref{PROP-degreesharp} by showing that every antidirected path of $G$ only uses the vertices in $W\cup X\cup Z$ or the vertices in $Y\cup X\cup Z$. Suppose $P=v_1v_2\cdots v_l$ is an antidirected path with $v_1v_2\in E(P)$, that is, the edge $v_1v_2$ is oriented from $v_1$ to $v_2$. If $v_1$ is embedded in $W\cup Z$, then $v_2$ must belong to $W\cup X$ as  $v_1v_2\in E(P)$ and then  $v_3\in W\cup Z$ as  $v_3v_2\in E(P)$. Continue this procedure until all vertices are embedded, we get that all vertices with odd indices must belong to $W\cup Z$ and vertices with even indices are in $W\cup X$. The case that $v_1$ is embedded in $X \cup Y$ can be verified similarly and this proves the proposition.

	\begin{figure}
		\centering
		\begin{tikzpicture}[black,line width=1pt,scale=1.3]
			\draw (0,1.6) circle (0.4);
			\draw (0,1.6) ellipse (1 and 0.6);
			\coordinate [label=center:$W$] (A) at (0,1.6);
			\draw (-2.3,0) ellipse (0.8 and 0.6);
			\coordinate [label=center:$Z$] (D) at (-2.3,0);
			\draw (0,-1.6) circle (0.4);
			\draw (0,-1.6) ellipse (1 and 0.6);
			\coordinate [label=center:$Y$] (C) at (0,-1.6);
			\draw (2.3,0) ellipse (0.8 and 0.6);
			\coordinate [label=center:$X$] (B) at (2.3,0);
			\draw [-stealth, line width=1.2pt] (0.05,1.2) -- (-0.05,1.2);
			\draw [-stealth, line width=1.2pt] (0.05,-1.2) -- (-0.05,-1.2);
			\draw[-stealth] [line width=2.5pt](1,1.3) -- (2,0.65);
			\draw[-stealth] [line width=2.5pt](-1,-1.3) -- (-2,-0.65);
			\draw[-stealth] [line width=2.5pt](2,-0.65) -- (1,-1.3);
			\draw[-stealth] [line width=2.5pt](-2,0.65) -- (-1,1.3);
			\filldraw[white](-1.45,0.1) circle (1pt)node[](u){};
			\path[draw, -stealth, line width=1.2pt] (u) edge[bend left=15] (1.45,0.1);      
			\filldraw[white](1.45,-0.1) circle (1pt)node[](v){};
			\path[draw, -stealth, line width=1.2pt] (v) edge[bend left=15] (-1.45,-0.1);
		\end{tikzpicture}
		\caption{\small The oriented graph $G$ in Proposition \ref{PROP-degreesharp}. The bold edges indicate that all possible edges are present and have the directed shown. The set $W$ has size $\lfloor n/4\rfloor$ and $|X|=|Z|=\lceil n/4\rceil$. Each of $W$ and $Y$ spans an almost regular tournament, that is, the indegree and outdegree of every vertex differ by at most one. The oriented graph induced by $X$ and $Z$ is an almost regular bipartite tournament. Table \ref{TAB-degreesharp} in Appendix \ref{APPSEC-table1} will help readers to better check every vertex of $G$ has the correct indegree and outdegree.} 
		\label{FIG-degreesharp}
	\end{figure}

	\medskip
	
	In this paper we extend	Theorem \ref{THM-arbitraryori} further to a pancyclicity result for arbitrary orientations. We first introduce some results on this topic. In \cite{kellyJCTB100}, Kelly, K\"uhn and Osthus showed that in oriented graphs the minimum semidegree condition $\delta^0(G)\geqslant (3n-4)/8$ gives not only a directed Hamilton cycle but also a directed cycle of every possible length.
	
	\begin{theorem}[\cite{kellyJCTB100}]\label{THM-directedpancyclic}
		There exists an integer $n_0$ such that every oriented graph $G$ on $n\geqslant n_0$ vertices with $\delta^0(G)\geqslant (3n-4)/8$ contains a directed cycle of length $t$ for all $3\leqslant t\leqslant n$.
	\end{theorem}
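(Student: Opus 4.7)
The Hamilton case $t = n$ is already handled by the Keevash--K\"uhn--Osthus result \cite{keevashJLMS79}, so the remaining task is to cover $3 \leqslant t \leqslant n-1$. The plan is a stability dichotomy. Fix a small $\varepsilon > 0$ and call $G$ \emph{extremal} if it is within $\varepsilon n^2$ arc-edits of the sharpness example of \cite{keevashJLMS79} for the directed Hamilton cycle threshold, and \emph{non-extremal} otherwise.

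In the non-extremal case, the minimum semidegree $\delta^0(G) \geqslant (3n-4)/8$ together with the failure of extremality should force $G$ to be a robust outexpander with strong parameters. Starting from a directed Hamilton cycle $C = v_1 v_2 \cdots v_n$ supplied by \cite{keevashJLMS79}, I would analyse the chords of $C$: each arc $v_i \to v_j$ of $G$ that is not in $C$ can be closed via the forward segment of $C$ from $v_j$ back to $v_i$, producing a directed cycle. Reading off the resulting cycle lengths, realising every $t \in \{3, \ldots, n-1\}$ reduces to finding chords of all ``jump lengths'' in $\{2, \ldots, n-2\}$ (as either forward or backward chords of $C$). Since $G$ has $\Omega(n^2)$ chords distributed over only $O(n)$ possible jump lengths, almost every length is realised by pigeonhole. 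For the few possibly missing jump lengths, I would exploit robust outexpansion to re-route a directed path to have any prescribed length and then close it into a cycle, possibly after first rotating $C$ to a different directed Hamilton cycle $C'$ in which the needed chord appears.

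In the extremal case, $G$ is close to a highly structured oriented graph whose vertex set splits into four parts with a rotational arc pattern between them, together with almost-regular tournaments inside two of the parts. Here any short directed cycle must cycle through the parts in a fixed rotational order, and small modifications inside the tournament parts shift cycle lengths by one at a time, producing cycles of every prescribed length in $\{3, \ldots, n-1\}$ by direct construction. The main effort is just to check that the near-extremal structure admits enough flexibility to cover every residue of $t$ modulo the rotational period.

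The main obstacle I anticipate is handling $t$ very close to $n$. In the non-extremal case, a random-subset argument (pass to $G[S]$ with $|S|=t$ and apply \cite{keevashJLMS79}) loses slack once $t = n - o(n)$, so the rotation/absorbing approach must be used with the full strength of the robust expansion. In the extremal case, a near-extremal oriented graph could be engineered to avoid a single chord distance, which would rule out exactly one value of $t$; controlling this likely requires iterating the stability dichotomy at a finer scale, or proving directly that under the exact threshold $(3n-4)/8$ no single chord distance can persistently be missing from every choice of Hamilton cycle.
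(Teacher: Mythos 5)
First, note that this paper does not prove Theorem \ref{THM-directedpancyclic} at all: it is quoted as a known result of Kelly, K\"uhn and Osthus \cite{kellyJCTB100}, so there is no in-paper proof to measure your sketch against; I can only judge the sketch on its own terms. Your overall plan (stability dichotomy: robust outexpander versus $\varepsilon$-extremal, with the Hamilton case delegated to \cite{keevashJLMS79}) is the right general strategy and is indeed the one used both in \cite{kellyJCTB100} and throughout the present paper. But as written the argument has genuine gaps. The central step of your non-extremal case is the pigeonhole claim that $\Omega(n^2)$ chords spread over $O(n)$ jump lengths realise ``almost every'' length. That inference is false: an average multiplicity of $\Omega(n)$ is perfectly consistent with most jump lengths being absent (for instance, in a blow-up of a short directed cycle every chord has jump length confined to a sparse set of residues, and near-extremal graphs with $\delta^0\approx 3n/8$ exhibit exactly this kind of modular rigidity). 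So the chord analysis gives nothing, and the whole burden falls on your fallback sentence ``exploit robust outexpansion to re-route a directed path to have any prescribed length,'' which is precisely the statement that has to be proved; the machinery that actually delivers it is of the type of Lemmas \ref{LEM-shortcexpander} and \ref{LEM-randomexpander} here (paths of prescribed length and orientation between prescribed endpoints in a robust diexpander, plus passing to a random linear-sized sub-expander for long cycles), none of which your sketch supplies or reduces to.

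The second gap is the range $t=n-o(n)$, which is where the exact constant $(3n-4)/8$ genuinely matters: the random-subset reduction loses an additive $\Theta(\sqrt{n})$ in semidegree and so cannot reach the exact threshold, and in the extremal case one must build an almost-spanning cycle in a graph that is only close to the four-class structure, i.e.\ one must handle the $O(\delta n)$ atypical vertices, balance the class sizes modulo the rotational period, and use regularity/Blow-up-type embedding --- the analogue of the work in Sections \ref{SEC-structureG}--\ref{SEC-fewsink} of this paper. Your final paragraph concedes that you do not know how to close this case (``likely requires iterating the stability dichotomy''), so the proposal stops short of a proof exactly at the point where the theorem is hard. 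A smaller inaccuracy: in the extremal structure it is not true that every short directed cycle winds through the four parts in rotational order, since $A$ and $C$ span almost regular tournaments and hence themselves contain cycles of all short lengths; this does not hurt you, but the real difficulty in the extremal case is not ``covering every residue of $t$'' for short $t$ (which is easy) but the long cycles just discussed.
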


	Later, Kelly \cite{kellyEJC18} proved that the above result 
	with the addition of an error term in the degree condition can be extended to arbitrary orientations of cycles.  
	
	\begin{theorem}[\cite{kellyEJC18}]\label{THM-kellyanyorianyl}
		Given $\varepsilon>0$,  there exists $n_0$ such that if $G$ is an oriented graph on $n\geqslant n_0$ vertices with $\delta^0(G)\geqslant (3/8+\varepsilon)n$, then $G$ contains a cycle of every possible orientation and of every possible length.
	\end{theorem}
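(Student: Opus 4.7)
My plan is to combine two ingredients: Theorem~\ref{THM-directedpancyclic}, which already supplies directed cycles of every length, and a random-subset reduction to the Hamilton-cycle result for arbitrary orientations under the same asymptotic degree hypothesis. Given a target orientation $\vec{C}_t$ on $t$ vertices and $3\leqslant t\leqslant n$, first dispose of the directed case: if $\vec{C}_t$ is the directed cycle then Theorem~\ref{THM-directedpancyclic} finishes the job since $(3/8+\varepsilon)n>(3n-4)/8$. So from now on assume $\vec{C}_t$ contains both a source and a sink.

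For $t$ bounded by some constant $t_0=t_0(\varepsilon)$, a short greedy embedding of $\vec{C}_t$ edge by edge succeeds: common in/out-neighbourhoods of any bounded set have size $\Omega(n)$ under the degree hypothesis, so any partial embedding extends. For $t\geqslant t_0$, the plan is to pass to a uniformly random subset $V'\subseteq V(G)$ with $|V'|=t$ and use Chernoff-type concentration, together with a short fix-up reassigning $o(t)$ vertices, to guarantee $\delta^0(G[V'])\geqslant (3/8+\varepsilon/2)|V'|$. On $G[V']$ I would then run Kelly's Hamilton-cycle framework: apply the Diregularity Lemma to obtain a reduced digraph $R$ with $\delta^0(R)\geqslant (3/8+\varepsilon/3)|R|$, use the resulting robust outexpansion of $R$ to locate a spanning closed walk whose blowup accommodates the prescribed source/sink pattern of $\vec{C}_t$, and realise $\vec{C}_t$ as a Hamilton cycle of $G[V']$ via a randomised vertex-by-vertex assignment across the clusters of the regularity partition. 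Exceptional vertices are collected at the end and inserted through short paths carrying the appropriate local orientation.

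The main obstacle will be the randomised embedding step itself, which is where Kelly's Hamilton-cycle argument concentrates most of its technical effort. The sequence of sources and sinks along $\vec{C}_t$ has to be matched to the ``capacity'' of each pair of clusters in $R$ to host consecutive cycle-vertices of the required type: an edge of $\vec{C}_t$ oriented one way demands sufficient outneighbours from the previous cluster into the next, and an edge oriented the other way demands sufficient inneighbours. The antidirected case is particularly delicate because the alternation of sources and sinks forces $R$ to admit a spanning bipartite-like structure, and transitions between short pieces of different orientations along $\vec{C}_t$ can introduce parity mismatches that must be corrected by splicing in directed detours of bounded length. Once these issues are handled for $V'$ in place of $V(G)$, pancyclicity across every orientation and every length $3\leqslant t\leqslant n$ follows uniformly.
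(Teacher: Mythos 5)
This theorem is imported by the paper from \cite{kellyEJC18} and is not proved there, so I can only compare your plan with the way the paper handles analogous statements. Your treatment of the long-cycle regime is essentially the standard reduction and matches what the paper itself does when it needs cycles of given linear length: pass to a random vertex subset on which the semidegree condition survives (cf.\ Lemma \ref{LEM-randompartition}, Lemma \ref{LEM-randomexpander} and the proof of Theorem \ref{THM-anyorianylength}), then invoke the Hamilton-orientation machinery. Note that with hypergeometric concentration this already works for all $t\geqslant t_0(\varepsilon)$ with $t_0$ a constant, so no separate ``medium'' regime is needed; of course the Hamilton step itself is only named in your sketch rather than proved, but that is exactly Kelly's theorem, so deferring to it is legitimate in spirit.

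The genuine gap is the bounded-length case. The claim that ``common in/out-neighbourhoods of any bounded set have size $\Omega(n)$'' is false at semidegree $(3/8+\varepsilon)n$: for two vertices $u,w$ one only has $d^+(u)+d^-(w)\geqslant (3/4+2\varepsilon)n<n$, which does not force $N^+(u)\cap N^-(w)\neq\emptyset$, and in graphs close to $G^{\ast}$ of Figure \ref{FIG-degreesharp} (take two vertices of $D$ lying in the same class of the bipartite tournament between $B$ and $D$) such common neighbourhoods are genuinely empty. Since closing a short cycle forces the final vertex to satisfy two adjacency constraints simultaneously, the greedy extension cannot finish; indeed even the transitive triangle has threshold $n/3+1$ (the $n/3$ blow-up of the directed triangle is an obstruction), the directed triangle at this semidegree rests on Caccetta--H\"aggkvist-type results (Shen, Hamburger--Haxell--Kostochka), and arbitrary orientations of constant-length cycles require Theorem \ref{THM-shortcycle}(i) with $\delta^0\geqslant (1/3+\varepsilon)n$, which is a nontrivial theorem rather than a greedy fact. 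This is precisely how the paper treats short and medium lengths in its own pancyclicity argument (Theorem \ref{THM-expanderanylength}): constant lengths via Theorem \ref{THM-shortcycle} and the triangle results, medium lengths via the robust-expander connection lemma (Lemma \ref{LEM-shortcexpander}), and only linear lengths via random substructures; in the extremal situation short cycles are instead embedded in the near-regular tournament $G[W]$. Replace your greedy step for $3\leqslant t\leqslant t_0(\varepsilon)$ by such tools; with that change, and granting Kelly's Hamilton-cycle argument for $t\geqslant t_0(\varepsilon)$, the reduction goes through.
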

	
	For more results on pancyclicity and vertex pancyclicity, we refer the reader to Section \ref{SEC-pancyclicity} and \cite{kellyEJC18,kellyJCTB100}. In this paper, we show that the minimum semidegree condition in Theorem \ref{THM-arbitraryori}  implies pancyclicity unless $G$ is isomorphic to an exception. More precisely, the following result says that the error term in the degree condition of Theorem \ref{THM-kellyanyorianyl} can be omitted. 
	
	\begin{theorem}\label{THM-anyorianylength}
		There exists an integer $n_0$ such that if $G$ is an oriented graph on $n\geqslant  n_0$ vertices with  $\delta^0(G)\geqslant  (3n-1)/8$, then one of the following holds.
		
		(i) $G$ contains a cycle of every possible orientation and of every possible length.
		
		(ii) There exists $v\in V(G)$ such that $G-v$ is isomorphic to the oriented graph of order $8s+2$ depicted in Figure \ref{FIG-degreesharp}.
		
		In particular, if  $\delta^0(G)\geqslant  3n/8$, then $G$ contains a cycle of every possible orientation and of every possible length.
	\end{theorem}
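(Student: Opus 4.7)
The plan is to follow the stability-based framework of \cite{kellyJCTB100} for directed pancyclicity, combined with Theorem \ref{THM-arbitraryori} to cover arbitrary orientations. Fix an orientation $C_t$ of a cycle on $3 \leq t \leq n$ vertices. The case $t = n$ is immediate from Theorem \ref{THM-arbitraryori}, so assume $t < n$. I would call $G$ \emph{$\varepsilon$-close to $G^{\ast}$} if $V(G)$ admits a partition $A' \sqcup B' \sqcup C' \sqcup D'$ (mirroring the sizes in Figure \ref{FIG-degreesharp}) whose oriented edge pattern agrees with the structure of $G^{\ast}$ up to at most $\varepsilon n^2$ discrepancies, and \emph{$\varepsilon$-far from $G^{\ast}$} otherwise.

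In the non-extremal case, the bound $\delta^0(G) \geq (3n-1)/8$ together with the absence of the extremal pattern should force $G$ to be a robust outexpander with strong parameters. Standard absorbing and expansion arguments then produce a subgraph $G' \subseteq G$ of order $(1-o(1))n$ with $\delta^0(G') \geq (3/8 + \varepsilon')|V(G')|$, so Kelly's Theorem \ref{THM-kellyanyorianyl} applied to $G'$ yields a cycle of length $t$ with orientation $C_t$. In the extremal case, one embeds $C_t$ explicitly: traverse the four parts of the partition in a pattern compatible with $C_t$, using the complete directed bipartite structure between consecutive parts and the near-regular tournaments inside $A'$ and $C'$, while the minimum semidegree bound absorbs the $\varepsilon n^2$ discrepancies. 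This embedding should succeed unless $G$ is forced into configuration (ii): the antidirected-path confinement observed in Proposition \ref{PROP-degreesharp} persists, and the unique way to attach one extra vertex to $G^{\ast}$ while maintaining $\delta^0(G) = (3n-1)/8$ is exactly the described family.

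The main obstacle is the extremal case analysis. One must classify how far $G$ can deviate from $G^{\ast}$ while still missing some oriented cycle and verify that the only obstruction is the explicit configuration in (ii). This is particularly delicate in the near-Hamilton range $(1-o(1))n < t < n$ and for antidirected orientations, where the restriction that antidirected paths lie in $A \cup B \cup D$ or $C \cup B \cup D$ is the decisive structural feature. Handling this range typically requires either cycle-surgery on a Hamilton cycle obtained from Theorem \ref{THM-arbitraryori} or a careful absorbing argument tailored to the orientation of $C_t$; a naive random-subset reduction does not work, since a size-$t$ random induced subgraph of $G$ only inherits $\delta^0 \approx 3t/8 - O(1)$, which just misses the threshold of Theorem \ref{THM-arbitraryori}.
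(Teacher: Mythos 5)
Your non-extremal argument has a genuine gap: you cannot in general produce a subgraph $G'\subseteq G$ of order $(1-o(1))n$ with $\delta^0(G')\geqslant (3/8+\varepsilon')|V(G')|$. A robust outexpander may be (nearly) $\lceil 3n/8\rceil$-regular in both directions, and deleting vertices can only decrease degrees, so for any induced $G'$ on $(1-o(1))n$ vertices one has $\delta^0(G')/|V(G')|\leqslant 3/8+o(1)$; the ratio can never be boosted to $3/8+\varepsilon'$, so Theorem \ref{THM-kellyanyorianyl} is not applicable this way. The paper instead exploits expansion directly (Theorem \ref{THM-expanderanylength}): $3$-cycles from the $0.346n$ semidegree bound, constant-length cycles from Theorem \ref{THM-shortcycle}(i) at threshold $(1/3+\varepsilon)n$, medium lengths from the linking lemma for robust diexpanders (Lemma \ref{LEM-shortcexpander}), and lengths $t\geqslant \nu^3 n/4$ by passing to a random induced subgraph of order exactly $t$ that remains a robust outexpander with linear semidegree (Lemma \ref{LEM-randomexpander}) and applying Taylor's Theorem \ref{THM-expanderanyori}, which needs only $\delta^0\geqslant\gamma |U|$ rather than $3|U|/8$. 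Relatedly, your closing claim that ``a naive random-subset reduction does not work'' misidentifies the actual mechanism: the reduction to a random induced subgraph of order $|C|$ is exactly what the paper does in the extremal case with many sink vertices, and it works because the subsequent embedding (Lemma \ref{LEM-specialorientation}) only requires $\delta^0\geqslant 3|G'|/8-O(\varepsilon n)$ together with the inherited structure \textbf{(G1)}--\textbf{(G4)} and two independent special edges, not the exact threshold of Theorem \ref{THM-arbitraryori}.

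Your extremal case is also far from a proof. ``Traverse the four parts in a pattern compatible with $C_t$ and let the semidegree absorb the discrepancies'' does not engage with the actual difficulties: the dichotomy on $\sigma(C_t)$ (few versus many sink vertices), covering the $O(\delta n)$ bad vertices, balancing the part sizes so that a blow-up-type embedding can close into a cycle, and---decisively for long antidirected stretches---the need for special edges in $E(W\cup Z,Y\cup Z)\cup E(X\cup Y,W\cup X)$. In particular, the exceptional configuration (ii) does not emerge from a vague ``classification of deviations'' but from a precise counting argument (Lemma \ref{LEM-2matching}): if no two independent special edges exist, the degree sum forces $n=8s+3$, $\delta^0(G)=(3n-1)/8$, and $G-v\cong G^{\ast}$, while one special edge still survives and suffices whenever $C_t$ is not antidirected. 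Your proposal would need this lemma (or an equivalent) plus the full embedding machinery of Sections \ref{SEC-fewsink} and \ref{SEC-manysink}, for short cycles the observation that $G[W\setminus\mathcal{B}]$ is nearly a regular tournament so Theorem \ref{THM-kellyanyorianyl} applies there, and the corrected non-extremal argument above; as written, both halves of the proof are missing their key steps.
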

	The oriented graph in Theorem \ref{THM-anyorianylength} (ii) has no antidirected cycle of order larger than $3(n/4+1)$. If not, let $L$ be such a long cycle and let $u$ be a source  of $L$ which is embedded into $W\cup Z$. Suppose $w\in V(L)$ is a vertex embedded into the set $Y$.  Clearly, the cycle $L$ has two antidirected paths $L[u,w]$ and $L[w,u]$ between $u$ and $w$ and one may assume w.l.o.g that $v\notin L[u,w]$. By the proof of Proposition \ref{PROP-degreesharp}, all vertices of $L[u,w]$ must belong to $W\cup X\cup Z$, contradicting the fact that $w$ is embedded in $Y$.

	For powers of a directed Hamilton cycle in digraphs, please see \cite{draganicJCTB158, draganicCPC30,bollobasJCTB50,debiasio2024} for more information.  For Hamilton cycles with prescribed  orientations in random digraphs, the reader is referred to \cite{friezeRSA54,friezeEJC27,frieze2019}. Another problem related to the topic of this paper is an oriented version of graph discrepancy, which 
	has received renewed attention in recent years. In general, given an oriented graph $G$, one is interested in finding a Hamilton cycle with
	as many edges as possible which are oriented in the direction of the cycle. For this topic, we refer the reader to \cite{freschiJCTB169,gishbolinerJGT103,guoarXiv2025}. For other topics of Hamilton cycles in digraphs, please see the survey paper  \cite{kuhnejc33}.

	\medskip
	
	{\bf The rest of the paper is organized as follows:} We start out with Section \ref{SEC-notation} which contains some extra definitions that will be used in the paper. In Section \ref{SEC-sketch}, we outline the proofs of Theorem \ref {THM-arbitraryori} and Theorem \ref{THM-anyorianylength}. The important tools and some extra results are presented in Section \ref{SEC-pre} and we describe the structure of the given oriented graph in Section \ref{SEC-structureG}. Then Sections \ref{SEC-fewsink} and \ref{SEC-manysink} prove Theorem \ref {THM-arbitraryori} in two cases depending on the number of sinks of the given Hamilton cycle. Finally, we prove Theorem \ref{THM-anyorianylength} in Section \ref{SEC-pancyclicity} and  Section \ref{SEC-remark} contains additional remarks and open problems.
	\section{Notation}\label{SEC-notation}
	
	In this section, we provide some basic terminology and notation.  For integers $a<b$, $[a]$ will denote the set $\{1,2,\ldots, a\}$ and $[a,b]=\{a,a+1,\ldots,b\}$. We will often write $t= a\pm b$ which means that $t$ is an integer in the interval $[a-b,a+b]$. Let $f$ and $g$ be two mappings from $\mathbb{N}$ to $\mathbb{R}^+$. We write $f(n)=O(g(n))$ if there exists some constant $K>0$ such that $f(n) \leqslant Kg(n)$ for all sufficiently large $n$.

	Let $G=(V,E)$ be a digraph.  The order of $G$ is the number of vertices in $G$,  denoted by $|V(G)|$ or $|G|$.  We will denote an edge oriented from $u$ to $v$ by $uv$. We use $G[X]$ to denote the subdigraph induced by a vertex set $X$. Let $G-X = G[V\backslash X]$.  
	For $v\in V(G)$, we denote the set of outneighbors and inneighbors of $v$ in $G$ by $N^+(v)$ and $N^-(v)$ respectively, and we write $d^+(v)=|N^+(v)|$ and $d^-(v)=|N^-(v)|$ for the outdegree and indegree of $v$, respectively. For $S\subseteq V(G)$, we write $N^-(v,S)=N^-(v)\cap S$, $N^+(v,S)=N^+(v)\cap S$. Set $d^-(v,S)=|N^-(v,S)|$, $d^+(v,S)=|N^+(v, S)|$ and $d(v,S)=d^+(v,S)+d^-(v,S)$. For two subsets $A,B$ of $V(G)$, we use $E(A,B)$ to denote the set of edges from $A$ to $B$ and write $E(A)$ for $E(A,A)$. Set $e(A,B) = |E(A,B)|$ and $e(A) = |E(A)|$. 
	
	We normally describe an oriented path or cycle $R$ by listing its vertices $v_1, v_2,\ldots, v_l$ in the clockwise order and write $R = v_1v_2\cdots v_l(v_1)$. The predecessor of $v_i$ on $R$ is the vertex $v_{i-1}$ and is denoted by $v_i^-$. Similarly, the successor of $v_i$,  denoted by $v_i^+$, is the vertex $v_{i+1}$. The length of $R$ is the number of its edges. A $k$-cycle (reps., $k$-path) is a cycle (reps., path) of order $k$. For path $R$, the vertices $v_1$ and  $v_l$ are  called the initial and  terminal of  $R$, respectively.  An $(A,B)$-path is a path with the initial vertex belonging to $A$ and  the terminal vertex  belonging to $B$.  We use $R[v_i, v_j] = v_iv_{i+1}\cdots v_j$ to denote the subpath of $R$ from $v_i$ to $v_j$.    Given sets $X_1,X_2,\ldots,X_l\subseteq V(G)$, we say that $R= v_1v_2\cdots v_l$ has form $X_1X_2\cdots X_l$ if $v_i\in X_i$ for all $i\in[l]$. For simplicity, we will say that $R$ has form $X^{l}$ if every vertex of $R$ belongs to $X$.  A vertex $v\in R$ is called a sink of $R$ if $d^+(v,R)=0$ and a source if $d^-(v,R)=0$. We say that a vertex $v\in R$ is a normal vertex of $R$ if it is neither a sink nor a source  of $R$. Let $\sigma (R)$ denote the number of sinks of  $R$.   We say that $R=v_1v_2\cdots v_l$ is an antidirected out-path if it is an antidirected path with $v_1v_2\in E(R)$.
	

	Let $C$ be an oriented cycle.   We define $dist_C(v_i,v_j)$ to be the distance between $v_i$ and $v_j$ on the cycle, that is, $dist_C(v_i,v_j)$ equals the length of the path $C[v_i,v_j]$.  For two subpaths $P_1$ and $P_2$ of $C$, the distance of them on $C$, denoted by $dist_C(P_1,P_2)$, is the distance between the terminal vertex of $P_1$ and the initial vertex of $P_2$. A sequence $(P_1,P_2,\ldots, P_k)$ of disjoint subpaths of $C$ is a path partition of $C$ if $dist_C(P_i,P_{i+1}) = 1$ for all $i\in[k]$. 
	
	\section{Proof sketch} \label{SEC-sketch}
	
	We first observe that any oriented graph satisfying the degree condition in Theorems \ref{THM-arbitraryori} and \ref{THM-anyorianylength} must be a robust outexpander or it is ``close to'' the oriented graph in Figure \ref{FIG-degreesharp}, see Theorem \ref{LEM:non-expander-case}. Roughly speaking, a digraph is a robust outexpander if every vertex set $S$ of reasonable size has an outneighborhood which is at least a little larger than $S$ itself. Taylor \cite{taylor2013} proved that every sufficiently large robust outexpander of linear minimum semidegree contains every possible orientation of a Hamilton cycle. This allows us to restrict our attention to the ``close to'' case. 
	
	In the paper, we say that $G$ is ``close to'' the extremal oriented graph if there is a partition $(W,X,Y,Z)$ of $V(G)$ satisfying conditions \ref{EP1}-\ref{EP7}  as stated in Section \ref{SEC-structureG}. Roughly speaking, \ref{EP1} and \ref{EP3} imply that $WXYZW$ is close to an $n/4$-blowup of the directed 4-cycle.  Furthermore, $G$ has $O(\delta n)$ bad vertices (which have small degrees) by \ref{EP3}-\ref{EP5}  and the degrees of those bad vertices satisfy an acceptable lower bound due to \ref{EP6} and \ref{EP7}. The condition \ref{EP4} ensures that each of $G[W]$ and $G[Y]$ spans an almost regular tournament.  Finally, for every good vertex in $Z$, \ref{EP5} dictates that the number of its inneighbors and outneighbors within $X$ is approximately equal to $|X|/2$.

	We will next divide the proof of Theorem \ref{THM-arbitraryori} into two cases depending on the number of sinks of the Hamilton cycle $C$ and the proofs are provided in Sections \ref{SEC-fewsink} and \ref{SEC-manysink} respectively.

	\subsection{$C$ has few sinks}

	In this case there are many directed 13-paths on  $C$ which can be used to balance the sizes of $W,X,Y,Z$ and to cover bad vertices. Indeed, Lemma \ref{LEM-cyclepartationfew} shows that there exist disjoint short subpaths $L_1,L_2,\ldots,L_l$ of $C$ which together contain all sinks and sources  of $C$ and  $dist_C(L_i,L_{i+1})\equiv 0\pmod 4$ for all $i$.  Moreover, there are sufficiently many directed 13-paths  among these $l$ paths.

	We first embed several directed 13-paths into $G$ to balance the sizes of $X,Z$ and cover all bad vertices of $G$, see Lemma \ref{LEM-XZbalbance}, which is the main difficulty in this case. Then we move on to greedily embed the paths containing sinks or sources into $G[W]$. Note that this is possible as $G[W]$ is close to a regular tournament and the number of vertices on those paths are quite small compared to the size of $W$. After this we embed several directed 13-paths into $G$ again to further balance the sizes of $W,X$ and of $W,Y$ respectively.  Finally, embed the remaining paths in $\{L_1,L_2,\ldots, L_l\}$ and remove all internal vertices of the paths $L_i$ from $G$. Suppose  $(W^{\prime},X^{\prime},Y^{\prime},Z^{\prime})$ is obtained from $(W,X,Y,Z)$ by  performing the removing operation. Then we embed the remaining subpaths  of $C$  by applying the Blow-up Lemma. It should be noted that every remaining path is a directed path of order 3 modulo 4 as $dist_C(L_i,L_{i+1})\equiv 0\pmod 4$. Further, in the above process we can embed all paths $L_i$ such that their endvertices are good vertices in $W$.  Thus the endvertices of remaining subpaths can be embedded properly in $X^{\prime}\cup Z^{\prime}$ when we apply the Blow-up Lemma.  We illustrate the process described above in Figure \ref{FIG-embedfewsink}. 
	
\begin{figure}[H]
	\centering

	\begin{subfigure}[t]{1\linewidth}
		\centering
		\begin{tikzpicture}[black,line width=1pt,scale=0.7]
			\coordinate [label=center:$L_1$] () at (0,1);
			\coordinate [label=center:$L_2$] () at (0,-3);
			\coordinate [label=center:$R_1$] () at (7.7,-1);
			\coordinate [label=center:$R_2$] () at (-7.7,-1);
			\filldraw[black](-5,0) circle (1.5pt)node[label=above: $a_1$](a1){};
			\node at (5,0) [diamond,fill=black,draw,inner sep=0.5mm][label=above: $b_1$](b1){};
			\filldraw[black](5,-2) circle (1.5pt)node[label=below: $a_2$](a2){};
			\node at (-5,-2) [rectangle,draw,inner sep=0.7mm][label=below:$b_2$](b2){};
			\node at (0,0) [rectangle,draw,inner sep=0.7mm][](p1){};
			\filldraw[black](2.5,0) circle (1.5pt)node[](p2){};
			\node at (-2.5,0) [diamond,fill=black,draw,inner sep=0.5mm][](p3){};
			\filldraw[black](2.5,-2) circle (1.5pt)node[](p4){};
			\filldraw[black, fill=black](0,-2) circle (2pt)node[](p5){};
			\filldraw[black](-2.5,-2) circle (1.5pt)node[](p6){};
			\foreach \i/\j in {b1/p2,p2/p1,p3/p1,p3/a1,a2/p4,p4/p5,p5/p6,p6/b2}{\path[draw, -stealth,line width=1pt] (\i)--(\j);}
			\draw [-stealth,blue,line width=2pt] (b1) to [out=0,in=90]  (7,-1) to  [out=-90,in=0] (a2);
			
			\draw [-stealth,red,dashed,line width=1pt] (a1) to [out=180,in=90]  (-7,-1) to  [out=-90,in=180] (b2);
		\end{tikzpicture}
		\caption*{\small (a)}
\end{subfigure}
\begin{subfigure}[t]{1\linewidth}
	\centering
	\begin{tikzpicture}[black,line width=1pt,scale=0.8]
		\draw (0,3.2) ellipse (2.7 and 0.8);
		\draw (-4,0) ellipse (0.6 and 2);
		\draw (4,0) ellipse (0.6 and 2);
		\draw (0,-3.2) ellipse (2.7 and 0.8);
		\coordinate [label=center:$X^{\prime}$] (X) at (5,0);
		\coordinate [label=center:$Z^{\prime}$] (Z) at (-5,0);
		\coordinate [label=center:$W^{\prime}$] (W) at (3.2,3.2);
		\coordinate [label=center:$Y^{\prime}$] (Y) at (3
		.2,-3.2);

		\coordinate (x1) at (4,0.35);
		\coordinate (x2) at (4,1.05);
		\coordinate (x3) at (4,-0.35);
		\coordinate (x4) at (4,-1.05);
		\coordinate (y1) at (0.4,-3.2);
		\coordinate (y2) at (1.2,-3.2);
		\coordinate (y3) at (-0.4,-3.2);
		\coordinate (y4) at (-1.2,-3.2);
		\coordinate (z1) at (-4,0.35);
		\coordinate (z2) at (-4,1.05);
		\coordinate (z3) at (-4,-0.35);
		\coordinate (z4) at (-4,-1.05);
		\coordinate (a3) at (-0.25,3.2);\coordinate (b3) at (-0.95,3.2);
		
		\coordinate (x11) at (3.9,0.41);
		\coordinate (x21) at (3.9,1.15);
		\coordinate (x31) at (3.9,-0.29);
		\coordinate (x41) at (3.9,-0.99);
		\coordinate (y11) at (0.5,-3.1);
		\coordinate (y21) at (1.3,-3.1);
		\coordinate (y31) at (-0.3,-3.1);
		\coordinate (y41) at (-1.1,-3.1);
		\coordinate (z11) at (-3.9,0.25);
		\coordinate (z21) at (-3.9,0.95);
		\coordinate (z31) at (-3.9,-0.45);
		\coordinate (z41) at (-3.9,-1.15);
		\coordinate (a31) at (-0.3,3.15);
		\coordinate (b31) at (-1.05,3.15);
		\node at (-1.7,3.2) [diamond,fill=black,draw,inner sep=0.5mm][label=left: $b_1$](b1){};
		
		\node at (1.25,3.2) [rectangle,draw,inner sep=0.7mm][label=above:$b_2$](b2){};
		
		\coordinate (a1) at (1.75,3.2);
		\coordinate (a2) at (0.6,3.2);
		\coordinate (a11) at (1.8,3.25);
		\coordinate (a21) at (0.55,3.15);
		\foreach \i/\j in {a1/x21,x2/y41,y4/z41,z4/b2}{\path[draw, -stealth, red, dashed,  line width=1pt] (\i)--(\j);}
		\foreach \i/\j in {b1/x41,x4/y21,y2/z21,z2/b31,b3/x31,x3/y11,y1/z11,z1/a31,a3/x11,x1/y31,y3/z31,z3/a21}{\path[draw, -stealth, blue, line width=2pt] (\i)--(\j);}
		\node at (0,5.5) [rectangle,draw,inner sep=0.7mm][](p1){};
		\filldraw[black](-2.2,5) circle (1.5pt)node[](p2){};
		\node at (2.2,5) [diamond,fill=black,draw,inner sep=0.5mm][](p3){};
		\filldraw[black](0.3,4.4) circle (1.5pt)node[](p4){};
		\filldraw[black, fill=white](0.9,4.9) circle (2pt)node[](p5){};
		\filldraw[black](1.5,4.4) circle (1.5pt)node[](p6){};
		\foreach \i/\j/\k/\l in {b1/160/-120/p2,p2/50/180/p1,p3/130/0/p1,p3/-50/40/a11,a2/160/-120/p4,p4/70/-160/p5,p5/-20/130/p6,p6/-50/40/b2}{\draw [-stealth] (\i) to [out=\j,in=\k]  (\l);}
		\foreach \i/\n in {(-0.25,3.2)/a3,(-0.95,3.2)/b3,(4,0.35)/x1,(4,1.05)/x2,(4,-0.35)/x3,(4,-1.05)/x4,(-4,0.35)/z1,(-4,1.05)/z2,(-4,-0.35)/z3,(-4,-1.05)/z4,(0.4,-3.2)/y1,(1.2,-3.2)/y2,(-0.4,-3.2)/y3,(-1.2,-3.2)/y4}{\filldraw[black]\i circle (1.5pt)node[](\n){};}
		\filldraw[black](1.75,3.2) circle (1.5pt)node[label=right: $a_1$](a1){};
		\filldraw[black](0.6,3.2) circle (1.5pt)node[label=above: $a_2$](a2){};
	\end{tikzpicture}
	\caption*{(b)}
\end{subfigure}
	\caption{\small An illustration of how to embed $C$ when it contains few sinks. The black diamonds,  white squares and black circles indicate the sources, sinks  and normal vertices of $C$, respectively. The white circle on the path $L_2$ in (b) indicates that it is a bad vertex of $G$. In the proof we use  directed 13-paths to cover bad vertices but here we use a 5-path $L_2$ for an illustration. Note that the blue fat directed path $R_1$ has the same direction as the cycle $C$ but the red dashed directed path $R_2$ has the opposite direction. } 
		\label{FIG-embedfewsink}
\end{figure}

\subsection{$C$ has many sinks}

In this case, the cycle $C$ may contain a long antidirected subpath. However, as mentioned in Proposition \ref{PROP-degreesharp}, it is highly non-trivial to embed an antidirected path of order more than $3(n/4+1)$. In the proof, special edges are useful to overcome this difficulty. 
Here, an edge $e$ is \emph{special} for the partition $(W,X,Y,Z)$ of $V(G)$ if it belongs to $E(W\cup Z,Y\cup Z)\cup E(X\cup Y,W\cup X)$. It is not difficult to check that if there is a special edge from $Y$ to $X$, then one may embed a long antidirected path in $G[W\cup X\cup Z]$ first and then the special edge can help us to embed the remaining part of the path in $G[Y]$. In Section \ref{SEC:2matching}, we first claim that the degree condition $\delta^0(G)\geqslant (3n-1)/8$ guarantees the existence of special edges and each special edge can be extended to a connection path between $W$ and $Y$. It is worth noting that the directed 3-paths of $C$ in appropriate positions also play a crucial role in embedding. Indeed, Theorem \ref{THM-specialmanysink} shows that there is a copy of $C$ in $G$ if one of the following holds: 

(i) there are two disjoint special edges for the partition $(W,X,Y,Z)$; 

(ii) the partition has a special edge and $C$ contains a directed 3-path.


\medskip
\begin{figure}[H]
\centering
\begin{tikzpicture}[black,line width=1pt,scale=0.8]
	\draw[line cap=round, draw = gray!30,line width =0.45cm] (-5,0.9) -- (5,0.9);
	\draw[line cap=round, draw = gray!30,line width =0.45cm] (-5.1,0) -- (5.1,0);
	\draw[line cap=round, draw = gray!30,line width =0.45cm] (-5.1,-2) -- (5.1,-2);
	\draw[line cap=round, draw = gray!30,line width =0.45cm] (-5.1,-2) -- (0,-3.8);   
	\draw[line cap=round, draw = gray!30,line width =0.45cm] (5.1,-2) -- (0,-3.8);
	\draw[line cap=round, draw = gray!30,line width =0.45cm] (-5,1.1) -- (0,3.8);
	\draw[line cap=round, draw = gray!30,line width =0.45cm] (5,1.1) -- (0,3.8);
	\draw[line cap=round, draw = gray!30,line width =0.45cm] (-5,2) -- (-3,4);
	\draw[line cap=round, draw = gray!30,line width =0.45cm] (5,2) -- (2,4);
	
	\draw[draw=gray!30,fill =gray!30] (-5,1.35) arc (90:270:0.35);
	\draw[draw=gray!30,fill =gray!30] (5,0.65) arc (270:450:0.35);
	
	\draw [line cap=round,gray!40,line width=0.2cm]  (4.2,3.01) -- (4.4,2.4) -- (3.77,2.33); 
	\draw [line cap=round,gray!40,line width=0.2cm]  (-4.12,3.42) -- (-3.5,3.5) -- (-3.58,2.88);         
	\draw [line cap=round,gray!40,line width=0.2cm]  (4,0.4) -- (4.5,0) -- (4,-0.4); 
	\draw [line cap=round,gray!40,line width=0.2cm]  (0.25,1.3) -- (-0.25,0.9) -- (0.25,0.5);       
	\draw [line cap=round,gray!40,line width=0.2cm]  (-0.25,-1.6) -- (0.25,-2) -- (-0.25,-2.4); 
	\draw [line cap=round,gray!40,line width=0.2cm]  (-2.19,-2.61) -- (-2.77,-2.82) -- (-2.46,-3.36); 
	\draw [line cap=round,gray!40,line width=0.2cm]  (2.65,-2.475) -- (2.33,-2.98) -- (2.91,-3.2);
	\draw [line cap=round,gray!40,line width=0.2cm]  (-2.64,2.82) -- (-2.06,2.7) -- (-2.3,2.13);  
	\draw [line cap=round,gray!40,line width=0.2cm]  (2.24,3.02) -- (2.5,2.44) -- (1.88,2.33);   
	
	\draw[draw=black] (-3.5,4) rectangle (3.5,5); 
	\draw[ draw=black] (-3.5,-4) rectangle (3.5,-5);
	\draw[ draw=black] (-5,-3.5) rectangle (-6,3.5);
	\draw[draw=black] (5,-3.5) rectangle (6,3.5);
	
	\foreach \i/\j in {1,...,7}
	{  \draw [black] (-4.5+\i,5) -- (-4.5+\i,4);
		\draw [black] (-3.5+\i,-5) -- (-3.5+\i,-4);
		\draw [black] (-6,-3.5+\i) -- (-5,-3.5+\i);
		\draw [black] (6,-3.5+\i) -- (5,-3.5+\i);}

	\coordinate (W11) at (-3.15,4.15);
	\coordinate (W12) at (-3,4); 
	\coordinate (W13) at (-2.85,3.85); 
	\coordinate (Z11) at (-5.15,2.15);
	\coordinate (Z12) at (-5,2);
	\coordinate (Z13) at (-4.85,1.85);
	
	\coordinate (W71) at (2.15,4.15);
	\coordinate (W72) at (2,4); 
	\coordinate (W73) at (1.85,3.85); 
	\coordinate (X11) at (5.15,2.15);
	\coordinate (X12) at (5,2);
	\coordinate (X13) at (4.85,1.85);
	
	\coordinate (W31) at (0,3.9);
	\coordinate (W32) at (0,3.6);
	\coordinate (X31) at (5.1,1.1);
	\coordinate (X32) at (5.1,0.9);
	\coordinate (Z31) at (-5.25,1.15);
	\coordinate (Z32) at (-5.1,1.05);
	\coordinate (Z33) at (-5.1,0.75); 
	
	\coordinate (X41) at (5.2,0.2);
	\coordinate (X42) at (5.2,0.1); 
	\coordinate (X43) at (5.2,-0.2); 
	\coordinate (Z41) at (-5.2,0.2);
	\coordinate (Z42) at (-5.2,-0.05);
	
	\coordinate (Y41) at (0,-3.6);
	\coordinate (Y42) at (0,-3.9);
	\coordinate (X61) at (5.1,-1.8);
	\coordinate (X62) at (5.1,-2.0); 
	\coordinate (X63) at (5.25,-2.15); 
	\coordinate (Z61) at (-5.2,-1.8);
	\coordinate (Z62) at (-5.2,-2); 
	\foreach \i/\j in {W11/Z11,Z11/W12,W12/Z12,Z12/W13,W13/Z13,W71/X11,X11/W72,W72/X12,W73/X13,X41/Z41,X42/Z42,Z42/X43,W32/Z32,Z32/X32,X32/Z33,X31/W31,W31/Z31,Z62/Y42,Y42/X63,X63/Y41,X62/Z61,Z61/X61} {
		\path[line cap=round,draw = blue,line width=1pt] (\i)--(\j);
	}

	\node at (4,4.5) [black] {$W$};
	\node at (4,-4.5) [black] {$Y$};
	\node at (-6.4,3) [black] {$Z_k$};
	\node at (6.45,3) [black] {$X_k$};
	\node at (-6.45,-2) [black] {$Z_1$};
	\node at (6.45,-2) [black] {$X_1$};
	\node at (-6.45,-3.1) [black] {$Z_0$};
	\node at (6.45,-3.1) [black] {$X_0$};
	\node at (-6.4,0) [black] {$Z_{k^\ast}$};
	\node at (6.45,0) [black] {$X_{k^\ast}$};
	
	\foreach \i in {0,1,2} {
		\filldraw[black](6.4,1.35+0.15*\i) circle (0.4pt); 
		\filldraw[black](6.4,-0.85-0.15*\i) circle (0.4pt); 
		\filldraw[black](-6.4,1.35+0.15*\i) circle (0.4pt); 
		\filldraw[black](-6.4,-0.85-0.15*\i) circle (0.4pt); 
	}
\end{tikzpicture}
\caption{\small An illustration of how randomised embedding method may be applied to $G$.}
\label{FIG-wind}
\end{figure}

A further difficulty arises from the asymmetry of the sets $X$ and $Z$. To see this, \ref{EP5} shows that  almost all vertices of $Z$ have many in- and outneighbors in $X$. However, possibly every vertex of $X$ has indegree zero or outdegree zero in $Z$. For example, let $X_1,X_2$ be a partition of $X$ such that $ZX_1X_2Z$ is a blowup of the directed 3-cycle.  Then every vertex of $X_1$ (resp., $X_2$) has outdegree (resp., indegree) zero in $Z$. To overcome this main difficulty,  we first establish further partitions of $G$ and $C$
in Sections \ref{SEC:lemmaG} and \ref{SEC:lemmaC}, respectively, these results are referred to as ``Lemma for $G$'' and ``Lemma for $C$''.   More precisely, we apply the Diregularity Lemma to  $G[X\cup Z]$. Using the degree condition in \ref{EP5}, we show that there is an almost perfect matching between the clusters of $X$ and $Z$, say $Z_1X_1,Z_2X_2,\ldots,Z_{k^\ast}X_{k^\ast}$ and $X_{k^\ast+1}Z_{k^\ast+1},X_{k^\ast+2}Z_{k^\ast+2},\ldots,X_kZ_k$ are edges in the matching from $Z$ to $X$ and from $X$ to $Z$, respectively. We further randomly choose disjoint sets $W_1,W_2,\ldots, W_k$ and $Y_1,Y_2,\ldots, Y_k$ of $W$ and $Y$, respectively, such that the sets $W_i,X_i,Y_i,Z_i$ with $i\in[k]$ have the same size. We also construct a path partition $(Q_0P_1Q_1P_2Q_2\cdots P_tQ_t\cdots)$  of $C$  such that every tuple $(Q_{i-1},P_i,Q_i)$ can be divided into three types, see Definition \ref{DEF-Ptype}. Roughly speaking, $(Q_{i-1},P_i,Q_i)$ is of type I if we can not describe the exact orientation of $P_i$; it is of type II if $P_i$ is of constant order which can be embedded into $G[W]$ greedily and it is of type III if $P_i$ is an antidirected path.

Based on the above partitions, now we show how to  cover almost all vertices in $X\cup Z$, for further details, see  Claim \ref{CLM-embedpath} in Section \ref{SEC:embedC}. Inspired by the techniques used in \cite{kellyEJC18, taylor2013}, the paths of type I will be randomly embedded around a triangle $Z_iX_iY_i$ or $X_iZ_iW_i$ and the paths of type III will be embedded around an edge $Z_iX_i, Z_iW_{i}$ or $W_{i}X_i$. Here, ``a path $P$ is randomly embedded around $F$'' means that we start from a random vertex $V_i\in V(F)$  and assign the next vertex of $P$ to either the successor or the predecessor of $V_i$ in $F$ according to the orientation of the edge. It is worthwhile mentioning that the way of splitting  $C$  is totally different from the way used in \cite{kellyEJC18, taylor2013}. Moreover, in this paper, we embed paths not only around a triangle but also around an edge.

Then we use two short subpaths of $C$ which contain many sinks or many directed subpaths to cover bad vertices in $G$ and the remaining vertices in $X\cup Z$, see Steps 1 and 3 in the proof of Claim \ref{CLM-connectcycle}. After this,  the semidegree of the remaining $G[W]$ (resp., $G[Y]$) is still large enough. Then Lemmas \ref{LEM-semitoexpander} and \ref{LEM-Connect} imply that the remaining vertices  in $W\cup Y$ can be covered by two oriented Hamilton paths with given endvertices in the remaining $G[W]$ and $G[Y]$, respectively.  Therefore, one may obtain an embedding of $C$ in $G$.

\medskip

Now we mention the differences between the two cases. When $C$ has few sinks, the cycle $C$ has many directed 13-paths which can be used to cover bad vertices. However, if $C$ has many sinks, then it is no longer guaranteed that $C$ has many directed or antidirected paths. To overcome this, we will use both sinks and short directed paths to cover bad vertices.   Another difference is that when $C$ has few sinks we need to balance the sizes of $X$ and $Z$ first, but we do not proceed this when $C$ has many sinks. This is because when $C$ has few sinks the cycle is close to being directed and every directed path uses the same number of vertices in $X$ and $Z$. However, for the case that $C$ has many sinks, we have enough sinks and sources to cover the remaining vertices in $X$ and $Z$ no matter whether they have the same size.

\medskip

\textbf{Some ideas in the proof of Theorem \ref{THM-anyorianylength}.} Similar with the proof of Theorem \ref{THM-arbitraryori}, we prove Theorem \ref{THM-anyorianylength} by using the stability technique where we split the proof into two cases depending on whether $G$ has expansion property. For the case that  $G$ has no expansion property, Theorem \ref{THM-extremal} implies that $V(G)$ has a partition $(W,X,Y,Z)$ satisfying \ref{EP1}-\ref{EP7}. Thus if $|C|<n/5$, then it seems to be not difficult to embed $C$ into $G[W]$ as $G[W]$ is close to a regular tournament of order $|W|\approx n/4$. When $|C|\geqslant n/5$, we can randomly choose a  subgraph $G^\ast$ in $G$ with $|G^\ast|=|C|$ which inherits the properties. Then the problem of embedding $C$ into $G$ is equivalent to the problem of finding a Hamilton cycle $C$ in $G^\ast$, which can be done by Theorem \ref{THM-arbitraryori}. If the given oriented graph $G$ has expansion property, by applying several simple results on expansion we will show that a minimum semidegree $\delta^0(G)\geqslant 
0.346n$ is sufficient to guarantee the existence of any oriented cycle, see Theorem \ref{THM-expanderanylength}.

\section{Preparation}\label{SEC-pre}

\subsection{Probabilistic tools}

\begin{theorem}[{Chernoff Bound 1}]  \label{THM-chernoff1}
Suppose that $X_1,X_2, \ldots, X_s$ are independent random variables with $\mathbb{P}[X_i=1]=p$ and $\mathbb{P}[X_i=0]=1-p$. Let $X=\sum_{i=1}^sX_i$. Then for any $0\leqslant t\leqslant sp$,
$$\mathbb{P}[|X-\mathbb{E}[X]|>t] <  2e^{-t^2/(3\mathbb{E}[X])}.$$
\end{theorem}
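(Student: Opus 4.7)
The plan is to prove both tails via the standard exponential moment method and combine them with a union bound. First I would fix $\lambda > 0$ and apply Markov's inequality to the non-negative random variable $e^{\lambda X}$, obtaining
$$\mathbb{P}[X - \mathbb{E}[X] > t] \leqslant e^{-\lambda(\mathbb{E}[X] + t)}\, \mathbb{E}[e^{\lambda X}].$$
By independence of the $X_i$, the moment generating function factors as $\mathbb{E}[e^{\lambda X}] = \prod_{i=1}^{s}\mathbb{E}[e^{\lambda X_i}] = (1 - p + pe^{\lambda})^{s}$, and the elementary inequality $1 + x \leqslant e^{x}$ gives $\mathbb{E}[e^{\lambda X}] \leqslant e^{\mathbb{E}[X](e^{\lambda} - 1)}$.

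Writing $\mu = \mathbb{E}[X] = sp$ and $\delta = t/\mu \in (0, 1]$, the optimal choice $\lambda = \ln(1 + \delta)$ then produces the standard multiplicative Chernoff bound
$$\mathbb{P}[X \geqslant (1 + \delta)\mu] \leqslant \left(\frac{e^{\delta}}{(1+\delta)^{1+\delta}}\right)^{\mu}.$$
The final step of the upper tail is a short calculus check that the exponent $\delta - (1+\delta)\ln(1+\delta)$ is bounded above by $-\delta^{2}/3$ on $(0, 1]$, which translates directly to $\mathbb{P}[X - \mu > t] < e^{-t^{2}/(3\mu)}$. The condition $t \leqslant sp = \mu$ in the statement is precisely what pins $\delta$ to the interval where this one-sided bound is available with the constant $1/3$.

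For the lower tail I would repeat the argument with $\lambda < 0$ (equivalently, apply the upper-tail computation to the variables $1 - X_i$), yielding the slightly sharper bound $\mathbb{P}[X \leqslant (1-\delta)\mu] \leqslant (e^{-\delta}/(1-\delta)^{1-\delta})^{\mu} \leqslant e^{-\delta^{2}\mu/2} \leqslant e^{-\delta^{2}\mu/3}$ for $\delta \in [0, 1)$. A union bound over the two one-sided deviation events produces the factor of $2$ in the statement, giving $\mathbb{P}[|X - \mu| > t] < 2e^{-t^{2}/(3\mu)}$. There is no real obstacle here: the argument is the textbook Bernstein--Chernoff computation, and the only mildly technical ingredient is the calculus optimisation in $\lambda$ together with verifying the elementary inequality on the exponent for $\delta \in [0, 1]$.
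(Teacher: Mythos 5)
Your proof is correct, and it is the standard exponential-moment (Bernstein--Chernoff) argument; the paper itself states this bound as a known probabilistic tool without giving any proof, so there is nothing to contrast it with. The one step you defer to "a short calculus check", namely $\delta-(1+\delta)\ln(1+\delta)\leqslant -\delta^2/3$ on $(0,1]$, does hold (the difference has derivative $2\delta/3-\ln(1+\delta)$, which is nonpositive there since it vanishes at $0$, is concave up only after $\delta=1/2$, and is still negative at $\delta=1$), and the hypothesis $t\leqslant sp$ is exactly what confines $\delta$ to this interval, as you note; the lower-tail bound and the union bound are likewise standard and fine.
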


The following lemma is an immediate consequence of  Chernoff Bound 1.
\begin{lemma}[\cite{debiasioEJC22}]\label{LEM-randompartition}
For any $\varepsilon > 0$, there exists $n_0$ such that if $G$ is a digraph on $n \geqslant  n_0$ vertices, then every set $S$ of $V(G)$ with $|S|\geqslant t$ contains a subset $T$ of order $t$  satisfying the following: 
\begin{align*}|d^{\pm}(v,T) - \frac{|T|}{|S|}d^{\pm}(v,S)| \leqslant  \varepsilon n \mbox{ and }|d^{\pm}(v,S\backslash T)- \frac{|S\backslash T|}{|S|}d^{\pm}(v,S)| \leqslant  \varepsilon n \mbox{ for each } v\in V(G).\end{align*}
\end{lemma}

\begin{theorem} [{Chernoff Bound 2}]\label{THM-chernoff2}
Let $X$ be a random variable determined by $s$ independent  trials $X_1,X_2,\ldots, X_s$ such that changing the outcome of any one trial can affect $X$ by at most $c$. Then for any $\lambda\geqslant  0$, $$\mathbb{P}[|X-\mathbb{E}(X)|>\lambda]<2e^{-\lambda^2/2c^2s}.$$
\end{theorem}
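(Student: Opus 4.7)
The plan is to derive this tail bound as the classical bounded-differences inequality of McDiarmid, whose standard proof goes through a Doob martingale and then an Azuma--Hoeffding step. First I would introduce the filtration $\mathcal{F}_i := \sigma(X_1,\ldots,X_i)$ for $0\leqslant i\leqslant s$, and define the Doob martingale $Y_i := \mathbb{E}[X\mid \mathcal{F}_i]$, so that $Y_0 = \mathbb{E}[X]$ and $Y_s = X$. The telescoping identity $X-\mathbb{E}[X] = \sum_{i=1}^{s}(Y_i - Y_{i-1})$ then rewrites the deviation as a sum of martingale differences $D_i := Y_i - Y_{i-1}$, each of which has conditional mean zero given $\mathcal{F}_{i-1}$.

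The first key step is to check the almost-sure bound $|D_i|\leqslant c$. Here I would use a coupling argument: given $\mathcal{F}_{i-1}$, one can write $Y_i$ and $Y_{i-1}$ as two conditional expectations of $X$ which differ only in whether $X_i$ is averaged out or conditioned on. Since altering the outcome of trial $i$ alone changes $X$ by at most $c$ by hypothesis, pairing up the two expectations trial by trial yields $|Y_i - Y_{i-1}|\leqslant c$ pointwise.

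The second key step is the exponential moment estimate. For any $h>0$, the convexity inequality $e^{hx}\leqslant \tfrac{1}{2}(1+x/c)e^{hc} + \tfrac{1}{2}(1-x/c)e^{-hc}$ valid for $|x|\leqslant c$ together with $\mathbb{E}[D_i\mid \mathcal{F}_{i-1}] = 0$ gives $\mathbb{E}\bigl[e^{hD_i}\,\bigm|\,\mathcal{F}_{i-1}\bigr]\leqslant \cosh(hc)\leqslant e^{h^2c^2/2}$. Multiplying these conditional bounds along the filtration yields $\mathbb{E}\bigl[e^{h(X-\mathbb{E}[X])}\bigr]\leqslant e^{sh^2c^2/2}$. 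Markov's inequality then produces $\mathbb{P}[X-\mathbb{E}[X]>\lambda]\leqslant e^{-h\lambda + sh^2c^2/2}$, and the optimal choice $h = \lambda/(sc^2)$ delivers the one-sided bound $e^{-\lambda^2/(2sc^2)}$. Applying the same argument to $-X$ and taking a union bound introduces the factor of $2$ and yields the stated two-sided inequality.

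I do not anticipate any real obstacle here; the only place the hypothesis is genuinely used is the martingale-difference bound $|D_i|\leqslant c$, and the remainder is the textbook Azuma--Hoeffding computation. One could alternatively cite McDiarmid's inequality directly, but writing out the three lines above makes the paper self-contained and matches the style in which Chernoff Bound~1 was stated.
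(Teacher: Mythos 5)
Your argument is correct: it is the standard Doob-martingale/Azuma--Hoeffding derivation of the bounded-differences (McDiarmid) inequality, and every step checks out — the bound $|Y_i-Y_{i-1}|\leqslant c$ does follow from the hypothesis via the independence of the trials, the convexity estimate gives $\mathbb{E}[e^{hD_i}\mid\mathcal{F}_{i-1}]\leqslant e^{h^2c^2/2}$, and the optimization $h=\lambda/(sc^2)$ yields exactly the exponent $-\lambda^2/(2sc^2)$ in the statement. There is nothing to compare against on the paper's side: the paper states Chernoff Bound 2 (like Chernoff Bound 1) as a known black-box tool and gives no proof, so your write-up is simply a correct self-contained substitute. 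Two harmless remarks: your derivation gives ``$\leqslant$'' rather than the cosmetic strict ``$<$'' in the statement, which is immaterial for every application in the paper; and since the hypothesis is the full bounded-differences condition, one could even invoke McDiarmid's inequality to get the stronger exponent $-2\lambda^2/(sc^2)$, of which the stated bound is a weakening.
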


We use Chernoff Bound 2 to prove the following lemma, which allows us to embed a collection of short subpaths of the given Hamilton cycle into  the reduced oriented graph $R$ in such a way that each of the clusters receives a similar number of vertices. When we talk about ``a path $P$ is  randomly embedded around $F$'' means that we start from the specified initial vertex $V_i\in V(F)$ and assign the next vertex of $P$ to either the successor or the predecessor of $V_i$ in $F$ according to the orientation of the edge. This randomised embedding method is also used in \cite{kellyEJC18,taylor2013}. 

\begin{lemma}\label{LEM-randomwind}
Let $R$ be a digraph and let $F = V_1V_2\cdots V_kV_1$ be a cycle in $R$. For any $\varepsilon>0$, there exists $n_0$ such that  for any $n\geqslant n_0$ and a collection of oriented paths $\{P_1,P_2,\ldots,P_s\}$ with $\sum_{i=1}^s |P_i|\leqslant n$  and $|P_i|\leqslant  n^{1/3}$ for each $i\in[s]$, there is a randomised embedding around $F$ of these $s$ paths satisfying the following. Define $a(i)$ to be the number of vertices in $\bigcup_{i=1}^s V(P_i)$ assigned to $V_i$ by this embedding. Then for all $V_i \in V(R)$, we have 
$$\left|a(i) - \frac{\sum_{i=1}^s |P_i|}{k}\right| \leqslant  \varepsilon n.$$
\end{lemma}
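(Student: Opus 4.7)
The plan is to combine a rotational-symmetry argument for the expectation with Chernoff Bound 2 (Theorem \ref{THM-chernoff2}) for the concentration. I would construct the random embedding as follows: for each $j\in[s]$, draw the initial cluster $V_{i_j}$ of $P_j$ independently and uniformly at random from $V(F)$, and then complete the embedding of $P_j$ by the deterministic wind-around rule described just before the lemma, namely, given that the current vertex of $P_j$ has been assigned to $V_i$, assign the next vertex to $V_i^+$ or $V_i^-$ according to whether the corresponding edge of $P_j$ is forward or backward.

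I would first compute $\mathbb{E}[a(i)]$. Fix $j\in[s]$ and a vertex $u\in V(P_j)$. Once the orientation pattern of $P_j$ is fixed, the cluster to which $u$ is assigned is a deterministic function of $V_{i_j}$, and rotating $V_{i_j}$ by one position along $F$ rotates the image of $u$ by the same position. Hence that image is uniformly distributed on $V(F)$, so if $X_j(i)$ denotes the number of vertices of $P_j$ assigned to $V_i$, then $\mathbb{E}[X_j(i)]=|P_j|/k$ and therefore $\mathbb{E}[a(i)]=\sum_{j\in[s]}|P_j|/k$.

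Next I would apply Theorem \ref{THM-chernoff2} to $a(i)=\sum_{j\in[s]} X_j(i)$, viewed as a function of the $s$ independent trials $V_{i_1},\ldots,V_{i_s}$. Changing a single trial alters $a(i)$ by at most $\max_j|P_j|\leqslant n^{1/3}$, so with $c=n^{1/3}$, $s\leqslant\sum_{j\in[s]}|P_j|<n$, and $\lambda=\varepsilon n$ one obtains
$$\mathbb{P}\bigl[|a(i)-\mathbb{E}[a(i)]|>\varepsilon n\bigr]<2\exp\!\left(-\frac{\varepsilon^2 n^2}{2n^{2/3}s}\right)\leqslant 2\exp\!\left(-\frac{\varepsilon^2 n^{1/3}}{2}\right).$$
A union bound over the $k\leqslant n$ clusters of $F$ yields a total failure probability of at most $2n\exp(-\varepsilon^2 n^{1/3}/2)$, which is strictly less than $1$ for all $n\geqslant n_0(\varepsilon)$. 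Hence a realisation of the random embedding with the required property must exist.

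I do not anticipate any serious obstacle. The one point that requires a little care is the symmetry argument giving $\mathbb{E}[X_j(i)]=|P_j|/k$: it relies on the fact that the wind-around rule depends only on the orientation pattern of $P_j$ and not on which specific cluster is the starting one, so that a cyclic shift of $V_{i_j}$ along $F$ induces the same cyclic shift of every subsequently assigned cluster. The two numerical constraints in the hypothesis, $|P_j|\leqslant n^{1/3}$ and $\sum_{j\in[s]}|P_j|<n$, are calibrated precisely so that the concentration exponent $\varepsilon^2 n^{1/3}/2$ dominates $\log n$ and the union bound closes.
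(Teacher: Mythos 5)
Your proposal is correct and follows essentially the same route as the paper: assign each path's initial vertex to a uniformly random cluster of $F$, observe that changing one assignment alters $a(i)$ by at most $n^{1/3}$, apply Chernoff Bound 2 with $\lambda=\varepsilon n$, and take a union bound over the $k$ clusters. Your explicit rotational-symmetry justification of $\mathbb{E}[a(i)]=\sum_{j\in[s]}|P_j|/k$ simply spells out what the paper dismisses as ``clearly''.
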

\begin{proof} We assign the initial vertex of each $P_i$ to one of $V_1,V_2,\ldots, V_k$ independently and uniformly at random. Observe that the assignment of each $P_i$ can change the number of vertices assigned to any vertex of $R$ by at most $|P_i|\leqslant n^{1/3}$. Clearly  $\mathbb{E}[a(i)] = \sum_{i=1}^s |P_i| / k$  and $s\leqslant n$. 
By Theorem \ref{THM-chernoff2}, we have 
\begin{align*}
	\mathbb{P}[|a(i) - \sum_{i=1}^s |P_i| / k| > \varepsilon n]  <  2 e ^ {-(\varepsilon n)^2/2 (n^{1/3})^2  n} <  1/k,
\end{align*}
as $n$ is sufficiently large. Thus the probability that there exists some $V_i$ with $i\in[k]$ which does not have almost the expected number of vertices assigned to it is less than 1. Thus with positive probability the above randomised embedding   satisfies the conclusion of the lemma.
\end{proof}

\subsection{The Diregularity Lemma and Blow-up Lemma}
The \emph{density} of a bipartite graph $G=(X,Y)$ with vertex classes $X,Y$ is defined to be $$\rho(X,Y)=\frac{e(X,Y)}{|X||Y|},$$ where $e(X,Y)$ is the number of edges between $X$ and $Y$ in the graph $G$. 

Given $\varepsilon>0, d\in[0,1]$, we say that a bipartite graph $G=(X,Y)$ is

$\bullet$ \emph{$\varepsilon$-regular} if for all sets $A\subseteq X$ and $B\subseteq Y$ with $|A|\geqslant  \varepsilon|X|, |B|\geqslant  \varepsilon|Y|$ we have $$|\rho(A,B)-\rho(X,Y)|<\varepsilon;$$

$\bullet$ \emph{$(\varepsilon,d)$-regular} if it is $\varepsilon$-regular with density $\rho(X,Y)\geqslant  d$;

$\bullet$ \emph{$(\varepsilon,d)$-superregular} if it is $(\varepsilon,d)$-regular and additionally the degree $d_G(x,Y)\geqslant  d|Y|$ for every $x\in X$ and $d_G(y,X)\geqslant  d|X|$ for every $y\in Y$.

\medskip

Let $G=(X,Y)$ be a bipartite digraph where all edges are oriented from $X$ to $Y$. We define the pair $(X,Y)$ as (super)regular if it satisfies the (super)regularity condition in the underlying graph of  $G$.

The following simple observation is well-known, which shows that removing a small proportion of vertices from a regular pair $(A,B)$ can make it  become superregular.

\begin{lemma}\label{LEM-regularsuper}
Let $(X,Y)$ be an $(\varepsilon,d)$-regular pair. Then there exists $A\subseteq X, B\subseteq Y$ of size $|A|=(1-\varepsilon)|X|$ and $|B|= (1-\varepsilon)|Y|$ such that $(A,B)$ is $(2\varepsilon,d-2\varepsilon)$-superregular.
\end{lemma}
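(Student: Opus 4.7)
The plan is to identify, on each side of $(X,Y)$, the small set of vertices whose degree into the opposite class is abnormally low, remove them, and then verify that the surviving pair is both dense-enough at every vertex (the superregular condition) and still regular on the slightly smaller ground set (via Proposition \ref{PROP-regulartosuper}).

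First I would apply Proposition \ref{PROP-fewvxswithsmalld} with $A=X$ and $B=Y$. This yields a set $X_{\mathrm{bad}}\subseteq X$ of size at most $\varepsilon|X|$ containing every vertex of $X$ with fewer than $(d-\varepsilon)|Y|$ neighbours in $Y$, and symmetrically a set $Y_{\mathrm{bad}}\subseteq Y$ of size at most $\varepsilon|Y|$. I would then choose $A\subseteq X\setminus X_{\mathrm{bad}}$ and $B\subseteq Y\setminus Y_{\mathrm{bad}}$ with the exact target sizes $|A|=(1-\varepsilon)|X|$ and $|B|=(1-\varepsilon)|Y|$, which is possible because $|X\setminus X_{\mathrm{bad}}|\geqslant (1-\varepsilon)|X|$ and likewise for $Y$; any surplus vertices simply remain outside $A$ or $B$ with no harmful effect.

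For the superregular degree condition, fix any $x\in A$. By construction $x\notin X_{\mathrm{bad}}$, so $d(x,Y)\geqslant (d-\varepsilon)|Y|$. At most $\varepsilon|Y|$ vertices of $Y$ were discarded when passing from $Y$ to $B$, hence
\[
d(x,B)\;\geqslant\;(d-\varepsilon)|Y|-\varepsilon|Y|\;=\;(d-2\varepsilon)|Y|\;\geqslant\;(d-2\varepsilon)|B|,
\]
and the symmetric bound holds for every $y\in B$. For the regularity half of the conclusion, apply Proposition \ref{PROP-regulartosuper} with $\alpha=1-\varepsilon$: the pair $(A,B)$ is $\varepsilon/(1-\varepsilon)$-regular with density exceeding $d-\varepsilon$. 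Since we may assume $\varepsilon\leqslant 1/2$ (otherwise the statement is vacuous as $d-2\varepsilon<0$ imposes no density requirement), $\varepsilon/(1-\varepsilon)\leqslant 2\varepsilon$ and the density exceeds $d-2\varepsilon$, so $(A,B)$ is $(2\varepsilon,d-2\varepsilon)$-regular.

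There is no real obstacle here; the only mildly delicate point is the exact size prescription, which is handled by noting that the bad sets are strictly smaller than the removal budget, so we have freedom to throw away additional good vertices to hit the targets $(1-\varepsilon)|X|$ and $(1-\varepsilon)|Y|$. Since such extra removals only \emph{increase} the minimum degree bound on the remaining vertices and cannot destroy regularity on the enlarged surviving set, the argument above goes through without modification.
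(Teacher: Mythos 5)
Your argument is essentially the paper's own proof: discard the at most $\varepsilon|X|$ and $\varepsilon|Y|$ low-degree vertices supplied by Proposition \ref{PROP-fewvxswithsmalld}, fix $A,B$ of the prescribed sizes inside the surviving sets, verify the degree condition by the same computation $d(x,B)\geqslant (d-\varepsilon)|Y|-\varepsilon|Y|$, and invoke Proposition \ref{PROP-regulartosuper} for the regularity part. The one slip is your application of Proposition \ref{PROP-regulartosuper} with $\alpha=1-\varepsilon$: its hypothesis requires $\varepsilon\leqslant\alpha\leqslant 1/2$, which fails for $\alpha=1-\varepsilon$ whenever $\varepsilon<1/2$, so that step is not covered by the proposition as stated; the fix is simply to take $\alpha=1/2$ (legitimate since $|A|=(1-\varepsilon)|X|\geqslant|X|/2$ when $\varepsilon\leqslant 1/2$), which already yields that $(A,B)$ is $2\varepsilon$-regular with density greater than $d-\varepsilon\geqslant d-2\varepsilon$, exactly as in the paper.
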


We will use the following degree form of the Diregularity Lemma with ``refinement property''.

\begin{lemma}[Diregularity Lemma, \cite{keevashJLMS79}]
\label{LEM-reguler}
For every $\varepsilon \in (0,1)$ and all numbers $M^{\prime}$, $M^{\prime\prime}$ there are numbers $M$ and $n_0$ such that if 

$\bullet$ $G$ is a digraph on $n \geqslant  n_0$ vertices,

$\bullet$ $U_0,U_1,\ldots,U_{M^{\prime\prime}}$ is a partition of $V(G)$, and 

$\bullet$ $d \in [0,1]$ is any real number,

then there is a partition $V_0,V_1,\ldots,V_k$ 
of $V(G)$ and a spanning subdigraph $G^\prime$ of $G$ such that the following statements hold:

$\bullet$ $M^{\prime}\leqslant  k \leqslant  M$,

$\bullet$ $|V_0| \leqslant  \varepsilon n$ and $|V_1|=\dots=|V_k|=m$,

$\bullet$ for each $V_i$ with $i\in[k]$ there exists some $U_j$ containing $V_i$,

$\bullet$  for each $x \in V(G)$, $d^\pm(x,G^{\prime}) > d^\pm(x)  - (d+\varepsilon)n$,

$\bullet$ for all $i\in[k]$ the digraph $G^{\prime}[V_i]$ is empty,

$\bullet$ for all $1 \leqslant  i\neq j \leqslant  k$ the bipartite graph $G^{\prime}[V_i,V_j]$ whose vertex classes are $V_i$ and $V_j$ and whose edges are all from $V_i$ to $V_j$ in $G^{\prime}$ is $\varepsilon$-regular and has density either $0$ or at least $d$.
\end{lemma}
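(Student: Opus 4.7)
The plan is to derive this directed refinement version from the classical (undirected) Szemer\'edi Regularity Lemma with refinement by a standard auxiliary bipartite-graph trick, followed by a cleaning step to produce $G'$.

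First, I would introduce the auxiliary bipartite graph $H$ on vertex set $V^+ \sqcup V^-$, where $V^+$ and $V^-$ are two disjoint copies of $V(G)$, and $u^+ v^-$ is an edge of $H$ precisely when $uv$ is an arc of $G$. The key observation is that if a common partition of $V(G)$ is placed on both $V^+$ and $V^-$, then $\varepsilon$-regularity of a bipartite pair in $H$ corresponds exactly to $\varepsilon$-regularity of the associated ordered directed pair in $G$. This reduces the directed problem to a single application of an undirected regularity lemma.

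Second, I would apply the undirected Szemer\'edi Regularity Lemma with refinement to $H$ with parameter $\varepsilon' = \varepsilon/5$ and cluster lower bound $M'$, taking as the initial partition the copy of $U_0, U_1, \dots, U_{M''}$ on each side. The output on $V^+$ and on $V^-$ will, in general, be two different partitions of $V(G)$. I would then take their common refinement and verify, using Proposition~\ref{PROP-regulartosuper}, that regularity is preserved after a uniform rescaling of $\varepsilon'$ (absorbing the factor into the original choice). This yields a single partition $V_0, V_1, \dots, V_k$ refining $U_0, \dots, U_{M''}$, with $M' \leq k \leq M$, $|V_0| \leq \varepsilon n$, and all ordered pairs $G[V_i, V_j]$ being $\varepsilon$-regular.

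Third, I would construct $G'$ from $G$ by deleting: \textbf{(a)} all arcs incident to $V_0$; \textbf{(b)} all arcs inside any cluster $V_i$; and \textbf{(c)} for every ordered pair $(V_i, V_j)$ whose directed density is less than $d$, all arcs from $V_i$ to $V_j$. For any vertex $x$, the out-arcs lost satisfy
\[
d^+(x,G) - d^+(x,G') \;\leq\; |V_0| + m + d(n-|V_0|) \;\leq\; \varepsilon n + m + dn \;<\; (d+\varepsilon)n,
\]
after adjusting constants in the choice of $\varepsilon'$; the argument for in-degrees is identical. Pairs left untouched have density either $0$ or at least $d$ by construction, giving the final clause.

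The main obstacle is the reconciliation in the second step: the refinement of two independently produced regular partitions could in principle destroy regularity, and one must ensure the cluster count remains at most $M$ despite taking a common refinement. The standard resolution is either to apply the refinement version of the lemma iteratively -- once for out-arcs with the given $U_j$'s, then again using the resulting partition as the input refinement for in-arcs -- or to directly adapt the index-increment proof of Szemer\'edi's lemma to keep track of both densities $\rho(V_i,V_j)$ and $\rho(V_j,V_i)$ in a single energy function; either route is routine once the bookkeeping is carefully set up.
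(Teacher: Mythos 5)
The paper does not prove this lemma at all: it is quoted verbatim as the degree form of the Diregularity Lemma from Keevash, K\"uhn and Osthus \cite{keevashJLMS79} (which in turn rests on the digraph regularity lemma of Alon and Shapira plus a standard cleaning argument), so there is no in-paper proof to compare against. Judged on its own terms, your overall strategy (a directed regularity partition followed by a cleaning step producing $G'$) is the right one, but two steps fail as written.

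First, the reconciliation of the two partitions. A common refinement of the partitions produced on $V^+$ and $V^-$ does not inherit regularity: a part of the refinement may occupy an arbitrarily small proportion of the cluster containing it, so Proposition~\ref{PROP-regulartosuper} (which needs $|A|\geqslant \alpha|X|$ with $\alpha$ bounded away from $0$) gives nothing, and the parts are no longer of equal size. Your first proposed repair, applying the refinement version twice (once for out-arcs, then for in-arcs with the first partition as the initial partition), fails for the same reason: the clusters of the second partition are only a $1/M$-fraction of those of the first, with $M$ depending on $\varepsilon$, so out-regularity is not inherited, and trying to choose the first regularity parameter small enough to compensate is circular because the second $M$ depends on the first. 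The only workable route is the last one you mention: run the index-increment argument once with the energy summed over \emph{ordered} pairs of parts, i.e.\ prove the digraph regularity lemma with refinement directly (this is exactly Alon--Shapira); at that point the auxiliary bipartite graph and the common refinement are not needed at all, so your ``main obstacle'' is in fact the whole proof.

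Second, irregular pairs. No version of the regularity lemma makes \emph{all} pairs $\varepsilon$-regular (the half-graph forces $\Omega(k)$ irregular pairs in any equitable partition), so the assertion in your second step that every ordered pair $G[V_i,V_j]$ is $\varepsilon$-regular is unavailable, and your cleaning step never deletes the edges of irregular pairs. Without this the final bullet of the lemma fails; and if one does delete them, the per-vertex degree bound can fail, since a single cluster may lie in linearly many irregular pairs and its vertices would then lose up to $n$ arcs. The missing idea is the standard one: apply the directed lemma with a smaller parameter $\varepsilon'$, delete the arcs of irregular pairs, and move into $V_0$ every cluster meeting more than $\sqrt{\varepsilon'}k$ irregular pairs; Markov's inequality keeps $|V_0|\leqslant\varepsilon n$ while each surviving vertex loses at most $\sqrt{\varepsilon'}n$ further arcs. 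Finally, do not delete the arcs incident to $V_0$: the degree condition is required for every $x\in G$, including $x\in V_0$, and your rule strips such vertices of all their arcs; keeping these arcs is harmless because the last two bullets only concern clusters $V_i$ with $i\geqslant 1$.
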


The last condition of Lemma \ref{LEM-reguler} shows that all pairs of clusters are $\varepsilon$-regular in both directions, but possibly with different densities. For given clusters $V_1,V_2,\ldots,V_k$ and the  digraph $G^\prime$, the reduced digraph $R^\prime$ with parameters $(\varepsilon, d)$ is the digraph  whose vertex set is $\{V_1,V_2,\ldots,V_k\}$ and in which $V_iV_j$ is an edge of $R^\prime$ if and only if the bipartite graph $G^{\prime}[V_i,V_j]$ is $(\varepsilon,d)$-regular. Note that when $G$ is an oriented graph its reduced digraph $R^\prime$ is not necessarily oriented. 

The next lemma (which is essentially from \cite{kellyCPC17,keevashJLMS79}) shows that if we retain precisely one direction with suitable probability for each 2-cycle in $R^\prime$, then we obtain a spanning \emph{oriented} subgraph $R\subseteq R^\prime$ possessing desirable properties. Its  proof is provided in Appendix \ref{APPSEC-proforien}.

\begin{lemma}\label{LEM-orientedreduced}
For each $0<\varepsilon <1$, there exists  $n_0$ such that the following holds. Suppose  $0\leqslant d\leqslant 1$ is a constant with $\varepsilon \leqslant d/2$. Let $G$ be an oriented graph on $n\geqslant n_0$ vertices and let $R^\prime$ be the reduced digraph with parameters $(\varepsilon,d)$ obtained by applying the Diregularity Lemma to $G$. Then $R^\prime$ has a spanning oriented subgraph $R$ such that  for every vertex $V_i\in R$, 
\begin{equation}\label{EQU-orientreduced}
	d_R^+(V_i) \geqslant \left(\frac{e_G(V_i,V(G))}{|G||V_i|}-(d+3\varepsilon)\right)|R|\mbox{ and } d_R^-(V_i) \geqslant\left(\frac{e_G(V(G),V_i)}{|G||V_i|}- (d+3\varepsilon)\right)|R|.
\end{equation}
In particular, $\delta^0(R) \geqslant  (\delta^0(G)/|G| - (d+3\varepsilon))|R|$.
\end{lemma}

To prove Theorem \ref{THM-arbitraryori}, we will need the Blow-up Lemma of Koml\'os, S\'arkoz\"y and  Szemer\'edi \cite{komlosC17}. It allows that there are certain vertices $y$ to be embedded into $V_i$ whose images are \textit{a priori} restricted to certain sets $S_y \subseteq V_i$ for every $i$. Here, we use $\Delta(H)$ to denote the maximum degree of $H$.

\begin{lemma}[Blow-up Lemma, \cite{komlosC17}]
\label{LEM-blowup}
For every $d, \Delta, c > 0$ and $k\in \mathbb{N}$, there exist constants $\varepsilon_0=\varepsilon_0(d,\Delta,c,k)$  and $\alpha=\alpha(d,\Delta,c,k)$ such that the following holds. Let $n_1,n_2,\dots,n_k$ be positive integers, $0<\varepsilon<\varepsilon_0$, and let $G$ be a $k$-partite graph with vertex classes $V_1,V_2,\dots,V_k$ where $|V_i|=n_i$ for $i\in[k]$. Let $J$ be a graph on vertex set $\{V_1,V_2,\dots,V_k\}$ such that $G[V_i,V_j]$ is $(\varepsilon,d)$-superregular whenever $V_iV_j\in E(J)$. Suppose that $H$ is a $k$-partite graph with vertex classes $W_1,W_2,\dots,W_k$ of size at most $n_1,n_2,\dots,n_k$, respectively, with  $\Delta(H) \leqslant  \Delta$. Suppose further that there exists a graph homomorphism $\phi: V(H) \rightarrow V(J)$ such that $|\phi^{-1}(i)| \leqslant  n_i$ for every $i \in [k]$. Moreover, suppose that in each class $W_i$, there is a set of at most $\alpha n_i$ special vertices $x$, each equipped with a set $S_x \subseteq V_i$ with $|S_x| \geqslant  c n_i$. Then there is an embedding of $H$ into $G$ such that every special vertex $x$ is mapped to a vertex in $S_x$.
\end{lemma}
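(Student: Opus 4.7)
The plan is to prove the Blow-up Lemma by a two-phase randomised greedy embedding in the spirit of Koml\'os-S\'ark\"ozy-Szemer\'edi. First I would select in each class $W_i$ a small \emph{buffer} $B_i\subseteq W_i$ of size $\alpha' n_i$ with $\alpha'\ll \alpha$, chosen so that (i) $B_i$ contains no special vertex and (ii) $B=\bigcup_i B_i$ is an independent set in $H$; such a choice exists greedily because $\Delta(H)\leqslant \Delta$ and the special vertices occupy only an $\alpha$-fraction of each class. Fix an arbitrary order of the non-buffer vertices of $H$ and embed them one by one: when processing $y\in W_i$, map $y$ to a vertex chosen uniformly at random from its \emph{candidate set} $A_y\subseteq V_i$, where $A_y$ consists of the still-free vertices of $V_i$ that are common neighbours (in the $(\varepsilon,d)$-superregular pairs indexed by $J$) of all already-embedded $H$-neighbours of $y$, further intersected with $S_y$ when $y$ is special.

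The technical heart of the argument is to show that, with high probability, $|A_y|=\Omega(n_{\phi(y)})$ holds throughout the greedy phase for every still-unembedded $y$. The ``static'' lower bound follows by iterating Proposition~\ref{PROP-fewvxswithsmalld} on the at most $\Delta$ already-embedded neighbours of $y$: their common neighbourhood has size at least $(d-O(\varepsilon))^{\Delta} n_{\phi(y)}$, multiplied by a further $\Omega(c)$ factor for a special $y$ because $|S_y|\geqslant c n_{\phi(y)}$. The dynamic shrinkage of $A_y$ due to vertices of $V_{\phi(y)}$ already occupied is controlled by Azuma-Hoeffding concentration applied to the Doob martingale of the greedy process (each random choice changes $|A_y|$ by at most $1$), together with a union bound over the $O(n)$ still-unembedded vertices and the $O(n^{\Delta})$ possible subsets of already-embedded neighbours. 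Choosing $\alpha'$ small compared with the constants produced above guarantees that, throughout this phase, the candidate sets never collapse and the process never gets stuck.

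The final phase embeds the buffer $B$. Because $B$ is $H$-independent, by the time any $x\in B_i$ is processed all its $H$-neighbours already carry images, and it suffices to find a perfect matching in each auxiliary bipartite graph $\mathcal{H}_i$ connecting $x\in B_i$ to the still-free $v\in V_i$ that lie in the common neighbourhood of the images of the $H$-neighbours of $x$ (and in $S_x$ if $x$ is special). The main obstacle, and the most delicate step, is to verify Hall's condition for $\mathcal{H}_i$ simultaneously over all $i$. This is done by a small-set/large-set dichotomy: for $|S|\leqslant \varepsilon |V_i|$ one uses that each individual $x\in S$ retains $\Omega(\alpha' n_i)$ admissible neighbours, since only an $\alpha'$-fraction of $V_i$ was reserved for the buffer and the static lower bound on $|A_x|$ survives; for larger $S$, a Hall violation would force a bipartite pair of linear-sized sets with abnormally few edges between them, contradicting the superregularity of the pairs via Proposition~\ref{PROP-fewvxswithsmalld}. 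A perfect matching in each $\mathcal{H}_i$ then yields the embedding of $B$, completing the proof.
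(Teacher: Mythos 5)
The paper does not prove this lemma at all: it is imported verbatim (in the version with restricted image sets $S_x$) from Koml\'os--S\'ark\"ozy--Szemer\'edi \cite{komlosC17}, so there is no internal proof to compare against, and your outline is in any case an attempt at the original randomised-greedy-plus-buffer strategy of that paper rather than a different route.

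As a proof, however, the sketch has genuine gaps at exactly the points where the Blow-up Lemma is hard. First, the bounded-differences claim ``each random choice changes $|A_y|$ by at most $1$'' is false: when an $H$-neighbour of $y$ is embedded, $A_y$ is intersected with the neighbourhood of the chosen image, which shrinks it multiplicatively (by roughly a factor $d$), not by one vertex; the martingale argument must treat the at most $\Delta$ neighbour-exposures separately (using regularity and avoidance of the exceptional sets of Proposition~\ref{PROP-fewvxswithsmalld}) from the one-vertex deletions caused by non-neighbours, and as stated the Azuma/union-bound step does not go through. Second, and more seriously, the Hall-condition verification for the buffer phase cannot be reduced to superregularity of $G$ in the way you claim: the auxiliary graph $\mathcal{H}_i$ joins vertices of $H$ to free vertices of $V_i$, and its edges depend on where the $H$-neighbours of the buffer vertices were embedded, so it is not a pair of subsets of a superregular pair and Proposition~\ref{PROP-fewvxswithsmalld} does not apply to it. What is actually needed is a second concentration statement, proved along the random greedy phase, that every free vertex $v\in V_i$ remains a candidate image for a positive proportion of the buffer vertices of $W_i$; similarly, the claim that a single buffer vertex retains $\Omega(\alpha' n_i)$ admissible free images requires controlling the intersection of a common neighbourhood with the very small free set left at the end of the main phase, which ``choose $\alpha'$ small'' does not provide. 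In the original argument these issues are handled by taking the buffer to be a fixed positive proportion of each class, distinguishing ideal from actual candidate sets, and using a queue that embeds a vertex immediately once its candidate set becomes dangerously small; without these ingredients the process can get stuck before the matching phase. Since the paper only cites the lemma, reproving it is not required here, but the proposal as written does not constitute a proof.
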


\subsection{Robust expander}

Robust expanders, introduced by K\"uhn, Osthus and Treglown \cite{kuhnJCTB100}, have formed a key feature in many results involving Hamilton cycles. Below we give the definition of the robust expander and some brief background.
\begin{definition}
Let $\nu\in(0,1)$. For an $n$-vertex digraph $G$ and $S\subseteq V(G)$, the $\nu$-outneighborhood of $S$ in $G$, denoted by $RN_{\nu,G}^+(S)$, is the set of vertices that have at least $\nu n$ inneighbors in $S$. Given $0<\nu\leqslant  \tau <1$, we say that $G$ is a robust $(\nu,\tau)$-outexpander if $$|RN_{\nu,G}^+(S)|\geqslant  |S|+\nu n$$ for every $S$ satisfying $\tau n< |S|< (1-\tau)n$. 
\end{definition}




\begin{lemma}[\cite{kuhnAM237}]\label{LEM-semitoexpander}
Let $0<1/n\ll\nu\ll\tau\leqslant \varepsilon/2\leqslant  1$. Every $n$-vertex oriented graph $G$  with $\delta^0(G)\geqslant  (3/8+\varepsilon)n$  is a robust $(\nu,\tau)$-outexpander.
\end{lemma}

Taylor \cite{taylor2013} proved that every sufficiently large robust outexpander of linear minimum semidegree contains any oriented  Hamilton cycle.

\begin{theorem}[\cite{taylor2013}]\label{THM-expanderanyori}
Let $1/n\ll \nu\leqslant  \tau\ll \gamma <1$ and let $G$ be an $n$-vertex digraph with $\delta^0(G)\geqslant  \gamma n$. If  $G$ is a robust $(\nu,\tau)$-outexpander, then it contains every possible orientation of a Hamilton cycle.
\end{theorem}

The following lemma of DeBiaso and Treglown~\cite{debiasioarXiv2025} allows us to find any not too short oriented path between any pair of vertices in a robust outexpander.

\begin{lemma}[\cite{debiasioarXiv2025}]\label{LEM-Connect}
Let $1/n \ll\varepsilon\ll  \nu \ll \tau \ll \gamma \ll 1$ and let $G$ be an $n$-vertex digraph  with $\delta^0(G)\geqslant \gamma n$. If $G$ is a robust $(\nu,\tau)$-outexpander, then for any distinct $x,y \in V(G)$, and any $k$-vertex oriented path $P$ with $1/\varepsilon\leqslant k \leqslant n$, there is a copy of $P$ in $G$ that starts from $x$ and ends at $y$.
\end{lemma}



\section{The structure of $G$}\label{SEC-structureG}

We now come to the first ingredient in the proof of Theorem \ref{THM-arbitraryori}, which is Theorem \ref{THM-extremal}, saying that every sufficiently large oriented graph $G$ with high minimum semidegree that fails to be a robust outexpander must structurally ``close to''  a well-behaved configuration.  To formally say what we mean by ``close to'' a structure  we need the following definition. 

Let $(W,X,Y,Z)$ be a partition of $V(G)$.  For simplicity, we  refer to $Z$ and $X$ as the ``predecessor'' and ``successor'' of $W$, respectively, and write $W^-=Z, W^+=X, W^{++}=Y$.  Analogous definitions hold for $X,Y$ and $Z$.

\begin{definition}[$\delta$-extremal]\label{DEF-extremal}
Let $G$ be an $n$-vertex oriented graph. We say that a partition  $(W,X,Y,Z)$ of $V(G)$ is $\delta$-extremal if the following statements hold:
\begin{enumerate}[label =\upshape \textbf{(EP\arabic{enumi})}, ref =\upshape (EP\arabic{enumi})]
	\setlength{\itemindent}{1.5em}
	\item $|W|,|X|,|Y|,|Z|=n/4\pm O(\delta n)$;\label{EP1}
	
	\item $e(W\cup Z,Y\cup Z)= O(\delta n^2)$;\label{EP2}

	\item For each $J\in \{W,X,Y,Z\}$ and $\sigma \in \{+,-\}$, all but $O(\delta n)$ vertices $v\in J$  satisfy $d^\sigma(v, J^\sigma)\geqslant |J^\sigma|-O(\delta n)$;\label{EP3}
	
	\item For $J\in\{W,Y\}$, all but $O(\delta n)$  vertices $v\in J$ satisfy $d^\pm(v,J)\geqslant |J|/2-O(\delta n)$;\label{EP4}
	
	\item $d^\pm(v,X)\geqslant |X|/2-O(\delta n)$ for all but  $O(\delta n)$  vertices $v\in Z$; \label{EP5}

	\item  For $J\in\{W,Y\}$ and $\sigma\in\{+,-\}$, every $v\in J$ satisfies $d^\sigma(v,J\cup J^\sigma)\geqslant n/50-O(\delta n)$;\label{EP6}
	
	\item For $J\in\{X,Z\}$ and $\sigma\in\{+,-\}$, every $v\in J$ satisfies  $d^\sigma(v,J^\sigma\cup J^{\sigma\sigma})\geqslant n/50-O(\delta n)$.\label{EP7}
\end{enumerate}
\end{definition}

We refer to a vertex $u\in J$ as a \emph{good} vertex  if it fulfills all degree conditions for $J$ in \ref{EP3}-\ref{EP5}. Otherwise, it is a \emph{bad} vertex of $J$. Whether a vertex is a bad vertex depends on the partition of $V(G)$, however, for the sake of convenience, we sometimes refer to it as a bad vertex of $G$ when no ambiguity arises.

\medskip

The following theorem is the main result in this section. 
\begin{theorem}\label{THM-extremal}
Let $0<1/n\ll\nu \leqslant \tau \ll\varepsilon \ll \delta\ll 1$ and let $G$ be an $n$-vertex oriented graph. If  $\delta^0(G)\geqslant 3n/8-O(\varepsilon n)$, then either $G$ is a robust $(\nu,\tau)$-outexpander or $V(G)$ has a $\delta$-extremal partition $(W,X,Y,Z)$. 
\end{theorem}

Theorem \ref{THM-extremal} establishes that if $G$ has no expansion property,  then $V(G)$ admits a partition $(W,X,Y,Z)$ such that  $WXYZW$ is close to an $n/4$-blowup of the directed 4-cycle, as guaranteed by \ref{EP1} and \ref{EP3}.  Furthermore,   \ref{EP3}-\ref{EP5} imply that $G$ has only $O(\delta n)$ bad vertices and additionally,  the degrees of those bad vertices satisfy an acceptable lower bound by \ref{EP6} and \ref{EP7}. The condition\ref{EP4} ensures that each of $G[W]$ and $G[Y]$ spans an ``almost regular tournament''.  Finally, for every good vertex in $Z$, \ref{EP5} dictates that the number of both its inneighbors and outneighbors within $X$ is approximately equal to $|X|/2$.

\medskip

To prove Theorem \ref{THM-extremal}, it suffices to prove the following three lemmas. 

\begin{lemma}\label{LEM:non-expander-case}
Let $0<1/n\ll\nu \leqslant \tau \ll\varepsilon\ll 1$ and let $G$ be an $n$-vertex oriented graph with  $\delta^0(G)\geqslant 3n/8-O(\varepsilon n)$. If $G$ is not a robust $(\nu,\tau)$-outexpander, then there is a partition $(W,X,Y,Z)$ of $V(G)$ satisfying $|W|,|X|,|Y|,|Z|=n/4\pm O(\varepsilon n)$ and $e(W\cup Z,Y\cup Z)\leqslant \varepsilon^2 n^2$.
\end{lemma}

\begin{lemma}\label{LEM-findbad}
Let $0<1/n\ll\varepsilon\ll \delta \ll 1$ and let $G$ be an $n$-vertex oriented graph with $\delta^0(G)\geqslant 3n/8-O(\varepsilon n)$. If $(W,X,Y,Z)$ is a partition of $V(G)$ satisfying $|W|,|X|,|Y|,|Z|=n/4\pm O(\varepsilon n)$ and $e(W\cup Z,Y\cup Z)\leqslant \varepsilon^2 n^2$, then \ref{EP1}-\ref{EP5}  hold for the partition.
\end{lemma}

\begin{lemma}\label{LEM-distributebad}
Let $0<1/n\ll\varepsilon\ll \delta \ll 1$ and let $G$ be an $n$-vertex oriented graph with $\delta^0(G)\geqslant 3n/8-O(\varepsilon n)$. Any partition of $V(G)$ satisfying \ref{EP1}-\ref{EP5} can be converted to a new partition that meets \ref{EP1}-\ref{EP7} by reassigning at most $O(\delta n)$ vertices.
\end{lemma}

We next end this section by giving the proofs of Lemmas \ref{LEM:non-expander-case}-\ref{LEM-distributebad}.

\begin{proof}[\textbf{Proof of Lemma \ref{LEM:non-expander-case}}]
Since $G$ is not a robust $(\nu,\tau)$-outexpander,  there exists a set $S\subseteq V(G)$ with $\tau n< |S|< (1-\tau)n$ such that $|RN_{\nu,G}^+(S)|<|S|+\nu n$. Set $W=RN_{\nu,G}^+(S)\cap S$, $X=RN_{\nu,G}^+(S)\backslash  S$, $Y=V(G)\backslash  (RN_{\nu,G}^+(S)\cup S)$ and $Z=S\backslash  RN_{\nu,G}^+(S)$. Next we claim that  $(W,X,Y,Z)$ is the desired partition.  

Since every vertex not in $RN_{\nu,G}^+(S)$ has indegree less than $\nu n$ in $S$, we have $e(W\cup Z,Y\cup Z)\leqslant \nu n^2$ and thus the second statement of the lemma holds as $\nu\ll \varepsilon$. Moreover, the inequality $|RN_{\nu,G}^+(S)|<|S|+\nu n$ implies that 
\begin{align}\label{EQ-BD}
	|X|<|Z|+\nu n.
\end{align} 

Now we show that  $W,X,Y$ and $Z$ are almost equal in size by proving the following claims.
\begin{claim}\label{CLM-bcd}
	$|X|+|Y|+|Z| \geqslant  3n/4 - O(\varepsilon n)$ and thus $|W|\leqslant  n/4+O(\varepsilon n)$.
\end{claim}
\begin{proof}
	First observe that $Y\cup Z \neq \emptyset$ since otherwise $n =|RN_{\nu, G}^+(S)|<|S|+ \nu n < (1-\tau)n + \nu n$, contradicting the fact $\nu\leqslant \tau$.  Let $u$ be a vertex in $Y\cup Z$ with $d^-(u,Y)\leqslant  |Y|/2$. More precisely, let $u$ be any vertex of $Z$ if $Y=\emptyset$ and otherwise let $u$ be a vertex of $Y$ with minimum indegree in $G[Y]$. It follows by $u\notin RN_{\nu,G}^+(S)$ that $d^-(u,S)=d^-(u,W\cup Z)<\nu n$. Then the degree condition  and  (\ref{EQ-BD}) imply that  
	$$3n/4-O(\varepsilon n) \leqslant  2 d^-(v) < |Y|+2|X| + 2\nu n< |X|+|Y|+|Z|+3\nu n,$$ 
	which yields the desired conclusion as  $\nu \ll \varepsilon$.
\end{proof}

\begin{claim}\label{CLM-abc}
	$|W| + |X| + |Y| \geqslant  3n/4 - O(\varepsilon n)$ and thus $|Z|\leqslant  n/4+O(\varepsilon n)$.
\end{claim}

\begin{proof}
	We may assume  $Z\neq\emptyset$ since otherwise there is nothing to prove. It follows by $Z=S\backslash RN_{\nu,G}^+(S)$ that  $d^-(v,Z)\leqslant  d^-(v,S)<\nu n$ for each $v\in Z$. Then $e(Z)<|Z|\nu n$ and there exists $u\in Z$ such that $d(u,Z)< 2\nu n$. Since $G$ is oriented and $\delta^0(G)\geqslant 3n/8-O(\varepsilon n)$, we have  $3n/4-O(\varepsilon n)\leqslant d(u)< 2 \nu n + |W| + |X| + |Y|$ and this proves the claim.
\end{proof}

\begin{claim}\label{CLM-abd}
	$|W| + |X| + |Z| \geqslant  3n/4 - O(\varepsilon n)$ and thus $|Y|\leqslant  n/4+O(\varepsilon n)$.
\end{claim}

\begin{proof}
	Recall that $d^-(v,S)<\nu n$ for each $v\notin RN_{\nu, G}^+(S)$. Calculating the outdegrees of  vertices in $S$, we have 
	$$|S|(3n/8-O(\varepsilon n)) < |S||RN_{\nu, G}^+(S)| + \nu n |V(G)\backslash RN_{\nu, G}^+(S)|<|S|(|S|+\nu n) + \nu n|S|/\tau,$$  the last inequality holds by  $|RN_{\nu, G}^+(S)|<|S|+\nu n$ and $|V(G)\backslash RN_{\nu, G}^+(S)|<  n <|S|/\tau$. Then by Claim \ref{CLM-abc}, we have $|W|=|S|-|Z| > 3n/8-O(\varepsilon n) - \nu n (1 + 1/ \tau)-|Z|\geqslant n/9$. This implies that   $e(W,Y\cup Z)\leqslant \nu n^2\leqslant 9\nu n |W|$. Computing all outdegrees of vertices in $W$, we have  $|W|(3n/8-O(\varepsilon n))   \leqslant|W|^2/2 + 9\nu n |W| + |W||X|$. Hence there exists $u\in W$ with $3n/4-O(\varepsilon n)\leqslant  2d^+(u) \leqslant  |W| + 18\nu n + 2|X|$, which implies the claim by (\ref{EQ-BD}).
\end{proof}

It follows by (\ref{EQ-BD})  and Claim \ref{CLM-abc} that $|X|\leqslant  n/4+ O(\varepsilon n)$. Moreover, we have $|X|\geqslant n/4- O(\varepsilon n)$ by Claims  \ref{CLM-bcd}-\ref{CLM-abd} and $n=|W|+|X|+|Y|+|Z|$. In the same way, we have $|W|,|Y|,|Z|= n/4 \pm O(\varepsilon n)$, which completes the proof.
\end{proof}

Before proving Lemma \ref{LEM-findbad}, we need the following result. 

\begin{lemma}\label{LEM:edges-to-degree}
Let $\xi $ be a real with $\xi  \ll 1$. Suppose that  $G$ is a digraph and  $A,B$ are two disjoint subsets of $V(G)$. 

\emph{(i)} If $e(A,B)\geqslant|A||B|-O(\xi n^2)$, then there are at most $\xi ^{1/3} n$ vertices $a$ in $A$ with $d^+(a,B)\leqslant |B|-\xi^{1/2} n$  and at most $\xi ^{1/3} n$  vertices $b$ in  $B$ with $d^-(b,A)\leqslant |A|-\xi^{1/2}  n$.

\emph{(ii)} If $e(A,B)\leqslant O(\xi n^2)$, then there are at most $\xi ^{1/3} n$ vertices $a$ in $A$ with $d^+(a,B)\geqslant \xi^{1/2}  n$ and at most $\xi ^{1/3} n$  vertices $b$ in  $B$ with $d^-(b,A)\geqslant\xi^{1/2} n$.
\end{lemma}
\begin{proof}
Observe that if $e(A,B)\leqslant O(\xi n^2)$, then in the complement digraph $\overline{G}$  of $G$ we have $e_{\overline{G}}(A,B)\geqslant |A||B|-O(\xi n^2)$. Thus (ii) follows directly from the first statement. Moreover, by symmetry, it suffices to prove the former statement of (i). Suppose to the contrary that there are at least  $\xi ^{1/3} n$ vertices $a \in A$ with $d^+(a, B) \leqslant |B| - \xi^{1/2}  n$. Since each such $a$ has at least $\xi^{1/2}  n$ missing edges to $B$,  the total number of missing edges from $A$ to $B$ is at least $\xi ^{5/6}n^2$. This contradicts the assumption that $e(A, B) \geqslant |A||B| - O(\xi  n^2)$, and then the claim holds.
\end{proof}

\begin{proof}[\textbf{Proof of Lemma \ref{LEM-findbad}}]
Recall that  $(W,X,Y,Z)$ is a partition of $V(G)$ with $|W|,|X|,|Y|,|Z|=n/4\pm O(\varepsilon n)$ and $e(W\cup Z,Y\cup Z)\leqslant \varepsilon^2 n^2$.
Applying Lemma \ref{LEM:edges-to-degree} (ii) with $A=W\cup Z$ and $B=Y\cup Z$, there are three sets $W^L\subseteq W,Y^L\subseteq Y,Z^L\subseteq Z$ satisfying

\begin{enumerate}[label =\upshape \textbf{(\roman{enumi})}, ref=\upshape {(\roman{enumi})}]
	\setlength{\itemindent}{1.5em}
	\item $|W^L|,|Y^L|,|Z^L|\leqslant 2 \varepsilon^{2/3} n$; \label{L1}
	\item $d^+(u,Y\cup Z) \leqslant \varepsilon n$ for each $u\in (W\backslash W^L)\cup (Z\backslash Z^L)$; \label{L2}
	\item $d^-(v,W\cup Z) \leqslant  \varepsilon  n$ for each $v\in (Y\backslash Y^L)\cup (Z\backslash Z^L)$. \label{L3}
\end{enumerate}
Next we finish the proof by proving the following two claims and some additional arguments.

\begin{claim}\label{CLM-dgood}
	All vertices of $Z\backslash Z^L$ are good.
\end{claim}
\begin{proof}
	Let $z$ be any vertex in $Z\backslash Z^L$. It follows by \ref{L2}-\ref{L3} that  $d(z) \leqslant d^+(z,W)+|X|+d^-(z,Y)+2\varepsilon n$. Since  $d(z)\geqslant 2\delta^0(G)$  and  $|W|,|X|,|Y|=n/4\pm  O(\varepsilon n)$, we have $d^+(z,W)$, $d^-(z,Y)\geqslant n/4 -O(\varepsilon n)$. Moreover, the degree condition and \ref{L3} imply that $3n/8-O(\varepsilon n)\leqslant d^-(z)\leqslant  \varepsilon n + |Y|+ d^-(z,X)$ and thus $d^-(z,X)\geqslant n/8-O(\varepsilon n)$. By symmetry,  we have $d^+(z,X) \geqslant n/8 -O(\varepsilon n)$. Thus $z$ is a good vertex since $|W|,|X|,|Y|,|Z|=n/4\pm O(\varepsilon n)$  and $\varepsilon \ll\delta$, which   completes the proof.
\end{proof}

\begin{claim}\label{CLM-agood}
	There are at most $\delta n$ bad vertices in $W$ and $e(W,X)\geqslant |W||X|-O(\varepsilon^{1/4}n^2)$.
\end{claim}
\begin{proof}
	Let $w$ be any vertex in $W\backslash W^L$. By \ref{L2} and the degree condition, we have $3n/8 -O(\varepsilon n) \leqslant d^+(w)\leqslant  d^+(w,W) + |X|+\varepsilon n$. Then  $|X| =n/4\pm O(\varepsilon n)$ implies that $d^+(w,W)\geqslant n/8 -O(\varepsilon n)$ and thus $e(W)\geqslant n^2/32-O(\varepsilon^{2/3} n^2)$ by \ref{L1}. Moreover, we have $d^-(w,W)\leqslant n/8 +O(\varepsilon n)$ as $G$ is oriented.  Therefore, there are at most $\varepsilon^{1/4} n$ vertices $u\in W$ with $d^-(u,W)<n/8 - \varepsilon^{1/3} n$  since otherwise
	\begin{align*}
		e(W)&\leqslant \varepsilon^{1/4}n(n/8-\varepsilon^{1/3} n)+ |W^L||W|+(|W\backslash W^L|-\varepsilon^{1/4}n)(n/8+O(\varepsilon n))\\
		&\leqslant n^2/32-\varepsilon^{7/12}n^2+2\varepsilon^{2/3}n^2+O(\varepsilon n^2)\\
		&< n^2/32-O(\varepsilon^{2/3} n^2),
	\end{align*}
	a contradiction. Let $W^{\prime}$ be the set of vertex in $W$ with $d^+(u,W)<n/8-O(\varepsilon n)$ or $d^-(u,W)<n/8-\varepsilon^{1/3} n$. By the above arguments, we have $|W^{\prime}|\leqslant |W^L|+\varepsilon^{1/4}n\leqslant 2\varepsilon^{1/4}n$.  Moreover,  for any $v \in W\backslash (W^{\prime}\cup W^L)$,  \ref{L2} gives that  $d^+(v) \leqslant \varepsilon n + d^+(v,X)+(|W|-d^-(v,W))$ and thus $d^+(v,X)\geqslant n/4 -2\varepsilon^{1/3}n$ by the degree condition. Therefore, by $|W|,|X|= n/4\pm O(\varepsilon n)$ and $|W^{\prime}|\leqslant 2\varepsilon^{1/4}n$, we have $$e(W,X)\geqslant n^2/16-O(\varepsilon^{1/4}n^2)\geqslant |W||X|-O(\varepsilon^{1/4}n^2).$$ Furthermore,  for each $z\in Z\backslash Z^L$ we have $d^+(z,W)\geqslant n/4-O(\varepsilon n)$ by the proof of Claim \ref{CLM-dgood}. Then \ref{L1}  implies that $e(Z,W) \geqslant |Z||W| -O(\varepsilon^{2/3} n^2)$. By Lemma \ref{LEM:edges-to-degree} (i), there are at most $\varepsilon^{2/9} n$ vertices of $W$ have indegree less than $n/4-O(\varepsilon^{1/3}n)$ in $Z$. Then $W$ has at most $|W^{\prime}\cup W^L|+\varepsilon^{2/9} n<\delta n$ bad vertices by the fact  $|W|,|X|,|Y|,|Z|=n/4\pm O(\varepsilon n)$.
\end{proof}
Swapping the roles of $W$ and $Y$, $X$ and $Z$, and  interchanging ``$+$'' and ``$-$'' in the proof of Claim \ref{CLM-agood}, $Y$ has at most $\delta n$ bad vertices and $e(X,Y)\geqslant |X||Y|-O(\varepsilon^{1/4}n^2)$. Applying Lemma \ref{LEM:edges-to-degree} (i) with $e(W,X)$ and $e(X,Y)$ respectively, there are at most $\delta n$ vertices $x\in X$ with $d^-(x,W)<|W|-\delta n$ or $d^+(x,Y)<|Y|-\delta n$, which implies that $X$ has at most $\delta n$ bad vertices. This completes the proof.
\end{proof}

\begin{proof}[\textbf{Proof of Lemma \ref{LEM-distributebad}}]

Let $(W,X,Y,Z)$ be a partition of $V(G)$ satisfying \ref{EP1}-\ref{EP5} and let  $B$ be the set of bad vertices in the partition. First observe that  
$|B|= O(\delta n)$ due to \ref{EP3}-\ref{EP5}, thus any partition that arises from  $(W,X,Y,Z)$ by reassigning bad vertices will still satisfy \ref{EP1}-\ref{EP5}.  It therefore suffices to establish that the new partition satisfies \ref{EP6}-\ref{EP7} after the reassignment of every bad vertex. 

Let $v$ be any vertex of $B$. By the degree condition and the fact that $V(G)=W\cup X\cup Y\cup Z$, we get that either $d^+(v,W\cup X)\geqslant n/50-O(\delta n)$ or $d^+(v,Y\cup Z)\geqslant n/50-O(\delta n)$ and, either $d^-(v,W\cup Z)\geqslant n/50-O(\delta n)$ or $d^-(v,X\cup Y)\geqslant n/50-O(\delta n)$. Reassign the vertex  $v$ according to the following rules:

\begin{itemize}
	\item Assign $v$ to $W$ if $d^+(v,W\cup X), d^-(v,W\cup Z)\geqslant n/50-O(\delta n)$;
	
	\item  Assign $v$ to $X$ if $d^+(v,Y\cup Z), d^-(v,W\cup Z)\geqslant n/50-O(\delta n)$;
	
	\item Assign $v$ to $Y$ if $d^+(v,Y\cup Z), d^-(v,X\cup Y)\geqslant n/50-O(\delta n)$;
	
	\item Assign $v$ to $Z$ if $d^+(v,W\cup X), d^-(v,X\cup Y)\geqslant n/50-O(\delta n)$.
\end{itemize}

It is not difficult to check that the partition after the reassignment satisfies \ref{EP6}-\ref{EP7}, which completes the proof.
\end{proof}

\section{$C$ has few sinks}\label{SEC-fewsink}

Keevash, K\"uhn and Osthus \cite{keevashJLMS79} showed that every sufficiently large oriented graph $G$ on $n$ vertices whose minimum semidegree is at least $(3n-4)/8$ contains a directed Hamilton cycle. In this section, we extend this result to all oriented Hamilton cycles with few sinks.

\begin{theorem}\label{THM-fewsink}
There exists a constant $\xi\ll 1$ such that if $G$ is a sufficiently large $n$-vertex oriented graph with $\delta^0(G)\geqslant (3n-4)/8$, then $G$ contains every $n$-vertex oriented cycle $C$ with $\sigma(C)\leqslant \xi n$.  In particular, $G$ has a directed Hamilton cycle. 
\end{theorem}

Indeed, in the proof we need $\xi \ll \varepsilon,\alpha$, where $\varepsilon$ and $\alpha$ are given in the Blow-up Lemma. Before proving this theorem, we show two key lemmas to the proof. A collection of disjoint paths $\mathcal{P}=\{P_1,P_2\ldots, P_p\}$ is said to be a \emph{$\delta$-balanced path system}  for a partition $(W,X,Y,Z)$ if  
\begin{enumerate}[label =\upshape \textbf{(PS\arabic{enumi})}, ref=\upshape (PS\arabic{enumi})]

\setlength{\itemindent}{1.5em}
\item  $p=O(\delta n)$ and  $|X\backslash V(\mathcal{P})| = |Z\backslash V(\mathcal{P})|$; \label{PS1} 

\item  Each $P_i$ is a directed 13-path with its endvertices being good vertices of  $W$. \label{PS2}
\end{enumerate}

\begin{lemma}\label{LEM-XZbalbance}
Let $0<1/n\ll\nu\ll \tau\ll \delta\ll 1$ and let $G$ be an $n$-vertex oriented graph with   $\delta^0(G)\geqslant (3n-4)/8$. If $G$ is not a robust $(\nu,\tau)$-outexpander, then $V(G)$ has $\delta$-extremal partition that admits  a $\delta$-balanced path system.
\end{lemma}

\begin{lemma}\label{LEM-balbancecycle}
There exists a constant $\xi\ll1$ such that the following holds. Let $0<1/n\ll \delta\ll \xi\ll1$ and let $G$ be an $n$-vertex oriented graph.  If $V(G)$ has a $\delta$-extremal partition with a $\delta$-balanced path system, then $G$ contains any $n$-vertex oriented cycle $C$ with $\sigma(C)\leqslant \xi n$.
\end{lemma}

Equipped with Lemmas \ref{LEM-XZbalbance} and \ref{LEM-balbancecycle}, we are ready to prove Theorem \ref{THM-fewsink}.

\begin{proof}[\textbf{Proof of Theorem \ref{THM-fewsink}}]
Let $\xi$ be the constant given in Lemma \ref{LEM-balbancecycle} and let $0<1/n \ll \nu \ll \tau \ll \varepsilon \ll \delta \ll \xi \ll 1$.  Suppose $C$ is any $n$-vertex oriented cycle with $\sigma(C)\leqslant \xi n$. By Theorem \ref{THM-expanderanyori}, it suffices  to consider the case that $G$ has no expansion property. It follows by   Lemma \ref{LEM-XZbalbance} that $V(G)$ has a $\delta$-extremal partition   with a $\delta$-balanced path system. Then we are done  by Lemma \ref{LEM-balbancecycle}.
\end{proof}

The proofs of Lemmas \ref{LEM-XZbalbance} and \ref{LEM-balbancecycle}  are provided in the subsequent two subsections, respectively.

\subsection{Proof of Lemma \ref{LEM-XZbalbance}}

Clearly, Lemma \ref{LEM-XZbalbance} follows immediately by the following two results.

\begin{lemma}\label{LEM-matchlarge}
Let $0<1/n \ll \delta \ll 1$ and let $G$ be an $n$-veretx oriented graph. Suppose $(W,X,Y,Z)$ is a $\delta$-extremal partition  of $V(G)$ with $|X|\geqslant |Z|$. If $E(X\cup Y,W \cup X)$ contains a matching of size $|X|-|Z|$, then $G$ contains a $\delta$-balanced path system for $(W,X,Y,Z)$. 
\end{lemma}

\begin{lemma}\label{LEM-assignmatch}
Let $0<1/n\ll \nu\ll\tau\ll \delta \ll 1$ and let $G$ be an $n$-vertex oriented graph with $\delta^0(G)\geqslant (3n-4)/8$. If $G$ is not a robust $(\nu,\tau)$-outexpander, then $V(G)$ has a $\delta$-extremal partition $(W,X,Y,Z)$ with $|X|\geqslant |Z|$ such that $E(X\cup Y,W\cup X)$ contains a matching of size $|X|-|Z|$.
\end{lemma}

First we give a proof of Lemma \ref{LEM-matchlarge}.

\begin{proof}[\textbf{Proof of Lemma \ref{LEM-matchlarge}}]
Set $l=|X|-|Z|$. Clearly, $l\geqslant 0$ and $l=O(\delta n)$ by \ref{EP1}. If $l=0$, then $\mathcal{P}=\emptyset$ is a trivial $\delta$-balanced path system. So we may assume $l>0$ in the following. By the condition, there is a matching $M$ of size $l$ in $E(X\cup Y,W \cup X)$. It then suffices to show  that for each edge of $M$, there is a directed 13-path $P_i$ that contains the edge and both endvertices of $P_i$ as good vertices of $W$. Moreover, the 13-paths are pairwise disjoint and each $P_i$ reduces the difference between $|X|$ and $|Z|$ by exactly one, that is, $|V(P_i)\cap X|=|V(P_i)\cap Z|+1$.

Next we construct $P_1,P_2,\ldots, P_l$ sequentially.  Let $ U $ be the set of bad vertices in  $G$ and all used vertices in 13-paths. Then $| U |\leqslant O(\delta n)+13l=O(\delta n)$ due to \ref{EP3}-\ref{EP5}. Let $uv$ be any edge of $M$. Since $M$ is a matching in $E(X\cup Y,W\cup X)$, it follows that $u\in X\cup Y$ and $v\in W\cup X$.  Consider first the case that $v\in W$. By \ref{EP6} and  $| U |=O(\delta n)$,  one can select a vertex  $v_1$ in $N^+(v,W\cup X)\backslash  U $. Note that $v_1$ is a good vertex as $v_1 \notin  U $. Then by \ref{EP3}, if $v_1\in W$, we can further pick an unused vertex $v_2$ in $N^+(v_1,X)\backslash U $ and, if $v_1 \in X$, we can instead choose $v_2$ in $N^+(v_1,Y)\backslash U $.  We iterate this process to construct a directed path $P_v$ starting at $v$,  which takes one of two forms: $W^2XYZW$ or $WXYZW^2$. Similarly, for the case $v\in X$, there is a path $P_v$ with form $XYZW^3$ or $XZW^4$.

By symmetry, by considering inneighbors of vertices, there is a directed path $P_u$ ending at $u$ with form $W^2XYZWX$ or $W^3XYZX$ if  $u\in X$ and, with form $W^4XY^2$ or $WXYZWXY$ if $u\in Y$. Set $P_i=P_uP_v$.  Clearly, $P_i$ is a directed 13-path with endvertices in $W$ and $|V(P_i)\cap X|=|V(P_i)\cap Z|+1$.  Moreover, all vertices in $V(P_i)\backslash \{u,v\}$ are good as they do not belong to $ U $. In particular, the endvertices of $P_i$ are good vertices of $W$. By the construction of $P_i$, we always select vertices while avoiding those used previously, thus, each 
$P_i$ is disjoint from all 13-paths constructed earlier. Therefore, $\mathcal{P}=\{P_1,P_2,\ldots,P_l\}$ is a path system satifying  \ref{PS1} and \ref{PS2}.
\end{proof}


For a partition $(W,X,Y,Z)$ of $V(G)$, we define $L_{WX}$ and $L_{XY}$ to be two sets of vertices with ``large'' degrees. More precisely, we have 
\begin{equation*}
\begin{split}
	L_{WX}= \{x \in W\cup X : d^-(x,X\cup Y) \geqslant n/50-O(\delta n)\},\\
	L_{XY} = \{x \in X\cup Y : d^+(x,W\cup X)\geqslant n/50-O(\delta n)\}.
\end{split}
\end{equation*}  
Let $K$ be a subset of $L_{WX}\cup L_{XY}$ with $|K|=\min\{|L_{WX}\cup L_{XY}|,n/200\}$. 
Let $M$ be a maximum matching in $E(X\cup Y,W\cup X)$ and let $e(M)$ be the size of $M$. Then 
\begin{equation}\label{EQ:eM-L}
e(M)\geqslant \min\{|L_{WX}\cup L_{XY}|,n/200\}.
\end{equation}
Indeed, since every $x\in K$ has both outdegree and indegree at most $|K|\leqslant n/200$ in $G[K]$, one may greedily find a matching of $E(X\cup Y,W\cup X)$ with size $|K|$ such that every edge in the matching covers exactly one vertex in $K$ the definition of $L_{WX}$ and $L_{XY}$. 


\medskip

Now we give a proof of Lemma \ref{LEM-assignmatch}. 

\begin{proof}[\textbf{Proof of Lemma \ref{LEM-assignmatch}}]

By Theorem \ref{THM-extremal}, $V(G)$ has a $\delta$-extremal partition $(W^\prime,X^\prime,Y^\prime,Z^\prime)$. Note that $W^\prime$ and $Y^\prime$ are symmetric by  Definition \ref{DEF-extremal}, moreover, $X^\prime$ and $Z^\prime$ are symmetric if \ref{EP5} is disregarded. In this proof since we do not invoke \ref{EP5}, we may w.l.o.g assume that $|X^\prime|\geqslant |Z^\prime|$ by reversing all edges of $G$ and swapping the labels of $W^\prime$ and $Y^\prime$ as well as those of $X^\prime$ and $Z^\prime$ if necessary. Then $|X^\prime|-|Z^\prime|=O(\delta n)$ by \ref{EP1}.

Let $M^\prime$ be a maximum matching in $E(X^\prime\cup Y^\prime,W^\prime\cup X^\prime)$. If $e(M^{\prime})\geqslant |X^\prime|-|Z^\prime|$, then there is nothing to prove as $(W^\prime,X^\prime,Y^\prime,Z^\prime)$ is the desired partition.  Thus we may assume that $e(M^{\prime})<|X^\prime|-|Z^\prime|$. On the other hand, as $|X^\prime|-|Z^\prime|=O(\delta n)\ll n/200$,  we have $e(M^\prime)\geqslant \min\{|L_{W^\prime X ^\prime}\cup L_{X^\prime Y^\prime}|,|X^\prime|-|Z^\prime|\}$ by (\ref{EQ:eM-L}). Therefore, $$|L_{W^\prime X^\prime}\cup L_{X^\prime Y^\prime}|<|X^\prime|-|Z^\prime|=O(\delta n).$$

\begin{claim}\label{CLM-decrease1}
	After reassigning vertices in $L_{W^\prime X^\prime}\cup L_{X^\prime Y^\prime}$, there is a new $\delta$-extremal partition $(W,X,Y,Z)$ such that either $|X|=|Z|$ or, $|X|> |Z|$ and  $L_{WX}\cup L_{XY}=\emptyset$.
\end{claim}

\begin{proof}
	Let $v$ be any vertex in $L_{W^{\prime}X^{\prime}}$. It follows by the definition that $v\in W^{\prime}\cup X^{\prime}$ and $d^-(v,X^{\prime}\cup Y^{\prime})\geqslant n/50-O(\delta n)$. First we assume that $d^+(v,W^{\prime}\cup X^{\prime})<n/50-O(\delta n)$. By the degree condition, we have $d^+(v,Y^{\prime}\cup Z^{\prime})\geqslant n/50 -O(\delta n)$ and then we reassign $v$ into $Y^{\prime}$. The resulting partition $(W,X,Y,Z)$ of $V(G)$ clearly meets \ref{EP1}-\ref{EP7}. Since $v\in Y$ and $d^+(v,W\cup X)=d^+(v,W^{\prime}\cup X^{\prime})<n/50-O(\delta n)$, the vertex $v$ does not belong to $L_{W X}\cup L_{X Y}$. Moreover,  if $v\in W^\prime$, then $|X^\prime|-|Z^\prime|=|X|-|Z|$ and,  $|X^\prime|-|Z^\prime|=|X|-|Z|+1$ when $v\in X^\prime$. This means that in this case  reassigning  $v$ decreases $|L_{W^\prime X^\prime}\cup L_{X^\prime Y^\prime}|$ by exactly one and the difference between $X^\prime$ and $Z^\prime$ is reduced by at most one.
	
	Now we consider the case that  $d^+(v,W^\prime\cup X^\prime)\geqslant n/50-O(\delta n)$.  If $v\in W^\prime$ or $v\in X^\prime$ and $|X^\prime|-|Z^\prime|\geqslant 2$, then we remove $v$ to the set $Z^{\prime}$. For the case  $v\in X^\prime$ and $|X^\prime|-|Z^\prime|=1$, we have $d^-(v,W^\prime\cup Z^\prime)\geqslant n/50-O(\delta n)$ by \ref{EP7} and then we remove $v$ to $W^{\prime}$. In both cases, let  $(W,X,Y,Z)$ be the resulting partition of $V(G)$.  Clearly, $(W,X,Y,Z)$ is $\delta$-extremal for $V(G)$. Moreover,  in the former case the size of $L_{W^\prime X^\prime}\cup L_{X^\prime Y^\prime}$ decreases by exactly one and, in the latter case the difference between $X^\prime$ and $Z^\prime$ reduces to zero.
	
	The similar operation described above can be applied to the vertices in $L_{X^\prime Y^\prime}$. Recall that  $|L_{W^\prime X^\prime}\cup L_{X^\prime Y^\prime}|<|X^\prime|-|Z^\prime|$. Then we may get the desired partition by reassigning vertices in $L_{W^\prime X^\prime}\cup L_{X^\prime Y^\prime}$. 
\end{proof}

Next we claim that there is a matching of size $|X|-|Z|$ in $E(X \cup Y, W \cup X )$. If $|X|=|Z|$, then there is nothing to prove. Thus we may assume that  $|X|>|Z|$ and $L_{W X }\cup L_{X Y }=\emptyset$. Let $M $ be a maximum  matching in $E(X \cup Y, W \cup X )$. Observe that the edges of $M $ can be  partitioned into four types:  from $X$ to $W$, from $Y$ to $W$, from $Y$ to $X$ and the edges in $G[X]$. For simplicity, we say that the matching from $S$ to $T$ matches $S_T\subseteq S$ to $T_S\subseteq T$ for $S,T\in\{W,X,Y\}$ and, the vertex set of $G[X]$ induced by $M$ is $X_X$. Clearly, $|S_T|=|T_S|$ for $S,T\in\{W,X,Y\}$ and $e(M ) = |W_X| + |W_Y| + |X_X|/2 + |X_Y|$. 

Since $M $ is a maximum matching, every edge in $E(Y ,W )$ must be incident to a vertex in  $W_Y\cup W_X\cup Y_W\cup Y_X$. Moreover, as $L_{W  X }\cup L_{X  Y } =\emptyset$, every vertex in $Y_W\cup Y_X$ has at most $n/50$ outneighbors in $W $ and every vertex in $W_Y\cup W_X$ has at most $n/50$ inneighbors in $Y $. Therefore, 
\begin{align*}
	e(Y ,W ) &\leqslant e(Y_W\cup Y_X, W ) + e(Y ,W_X\cup W_Y) \\       &\leqslant (|W_Y\cup X_Y|+|W_X \cup W_Y|)n/50 \\  
	&\leqslant e(M )|W |/5.
\end{align*}
The last inequality holds from $|W |>n/5$ (due to \ref{EP1}) and $|W_Y\cup X_Y|+|W_X \cup W_Y|\leqslant 2e(M )$. Similarly, we have $e(X ,W ) \leqslant e(M )|W |/5$. Considering the sum of indegrees of vertices in $W $, we have $\sum_{w\in W }d^-(w)\leqslant |W |(|W |-1)/2+|W ||Z |+2e(M )|W |/5$. Then there exists a vertex $w \in W $ with $d^-(w) \leqslant (|W |-1)/2 + |Z | + 2e(M )/5$. In the same way, there is a vertex $y\in Y $ with $d^+(y) \leqslant (|Y |-1) / 2 + |Z | +  2e(M)/5$. 

It follows by $L(W X)\cup L(X Y) =\emptyset$ that $\delta^0(G[X])\leqslant n/50$. Note that every edge of $G[X ]$ must be incident to a vertex in $X_W\cup X_X\cup X_Y$. Then
$e(X ) \leqslant |X_W\cup X_X\cup X_Y| n/50 \leqslant e(M ) |X |/5.$
Thus, there exists a vertex $x\in X $ such that $d(x) \leqslant |W | + |Y | + |Z | + e(M )/5$. Therefore, $$d^-(w) +d^+(y) + d(x) \leqslant \frac{3}{2}(|W |+|Y |+2|Z |)-1 + e(M ).$$ By the degree condition and  $n = |W | + |X | + |Y | + |Z |$, we have  $(3n-4)/2 \leqslant d^-(w) +d^+(y) + d(x)\leqslant  3(n+|Z |-|X |)/2-1 + e(M )$. Thus $|X | - |Z | \leqslant  2e(M )/3 + 2/3$. If  $e(M )<|X | - |Z |$, then  $e(M)+1 \leqslant |X |-|Z | \leqslant 2e(M)/3 + 2/3$, which is impossible. Therefore, $e(M )\geqslant |X | - |Z |$ and then $(W ,X ,Y ,Z )$ is the desired partition.
\end{proof}

\subsection{Proof of Lemma \ref{LEM-balbancecycle}}

Before proving Lemma \ref{LEM-balbancecycle}, we first present a path partition of the cycle $C$, which shows that there is a small number of short subpaths of $C$ with nice properties such that deleting all   \emph{internal} vertices of these  paths produces a family of directed paths whose lengths are modulo 4. 

\begin{lemma}\label{LEM-cyclepartationfew}
Let $0 < 1/n \ll \xi \ll 1$ and let $C$ be any $n$-vertex  oriented cycle with $\sigma(C)\leqslant \xi n$.   There is a path partition of $C=(L_1R_1L_2R_2\cdots L_lR_l)$ satisfying the following:
\begin{enumerate}[label = \upshape \textbf{(\roman*)}, ref = \upshape (\roman*)]
	\setlength{\itemindent}{1.5em}
	\item  $\sqrt{\xi} n\leqslant l\leqslant 2\sqrt{\xi} n$ and $|L_i|,|R_i|\geqslant 3$ for each $i\in[l]$;\label{i}
	
	\item $\sum_{i=1}^l |L_i|\leqslant 13\sqrt{\xi} n+20\xi n$ and there are exactly $\sqrt{\xi} n$ directed 13-paths $L_i$; \label{ii}
	
	\item Each  $R_i$ is a directed path of order 3 modulo 4, and all of its vertices are  normal vertices of $C$. \label{iii}
\end{enumerate}
\end{lemma}
By (iii), deleting all internal vertices of $L_i$ and $L_{i+1}$ produces a directed path that contains $R_i$ and whose length is modulo 4. 

\medskip

Equipped with Lemma \ref{LEM-cyclepartationfew}, we now give a proof of  Lemma \ref{LEM-balbancecycle}.

\begin{proof}[\textbf{Proof of Lemma \ref{LEM-balbancecycle}}]

Set $d=1/2,\Delta=2,c=1/2,k=4$, there exist $\varepsilon_0, \alpha$ such that Lemma \ref{LEM-blowup} holds for all $\varepsilon < \varepsilon_0$  and $\alpha$. Let $0<1/n\ll \delta \ll \xi \ll \alpha,\varepsilon\ll 1$. 

Let $(W,X,Y,Z)$ be a $\delta$-extremal partition of $V(G)$ and let $\mathcal{P}$ be its $\delta$-balanced path system. By \ref{PS1}-\ref{PS2}, we have $|V(\mathcal{P})|=13|\mathcal{P}|=O(\delta n)$. We use $B$ to denote the set of bad vertices in $V(G)\backslash V(\mathcal{P})$. Clearly, $|B|=O(\delta n)$ by \ref{EP3}-\ref{EP5}.  Let $C$ be any $n$-vertex oriented cycle with $\sigma(C)\leqslant \xi n$ and let $(L_1R_1L_2R_2\cdots L_lR_l)$ be its path partition given by Lemma \ref{LEM-cyclepartationfew}. 

Note that there are exactly $\sqrt{\xi} n$ directed 13-paths among $\{L_1,L_2,\ldots, L_l\}$. We first embed $|\mathcal{P}|=O(\delta n)$ of them onto the paths in $\mathcal{P}$. It follows by \ref{PS1} that $$|X\backslash V(\mathcal{P})|=|Z\backslash V(\mathcal{P})|.$$ Set $r=\sqrt{\xi} n-|\mathcal{P}|$. 
Assume w.l.o.g that $L_1,L_2,\ldots,L_r$ are the remaining directed 13-paths.  For those paths $L_i$ that are not directed 13-paths, we greedily embed them into $G[W\backslash (V(\mathcal{P})\cup B)]$. This is possible since the sum of orders of those paths is at most $20\xi n$ (by Lemma \ref{LEM-cyclepartationfew} (ii)) and $G[W\backslash (V(\mathcal{P})\cup B)]$ still has a large semidegree by \ref{EP4} and the fact that $|V(\mathcal{P})\cup B|=O(\delta n)$.

Let $U$ be the set of used vertices in $V(G)$. In the following, the set $U$  will be updated each time we embed a vertex of $C$ onto a vertex in $V(G)$. At this stage, we have $|U|\leqslant |V(\mathcal{P})|+20\xi n=O(\xi n)$. To embed $C$,  it suffices to further embed $L_1,L_2,\ldots, L_r$ and $R_1,R_2,\ldots,R_l$. Indeed, we will proceed in the following three steps. In Step 1, some of  paths $L_i$ with $i\in[r]$ will be used to cover the vertices in $B$ and the embedding will not disrupt the balance between $X$ and $Z$. In other words, the sizes of $X$ and $Z$ are equal after deleting all internal vertices of the embedded paths. The remaining paths in $\{L_1,L_2,\ldots, L_r\}$ will be used to further balance the sizes of $W,X,Y$ and $Z$ in Step 2. Finally, the paths $R_i$ with $i\in[l]$ will be embedded into $G$ by using the Blow-up Lemma in Step 3.

\medskip
\noindent\textbf{Step 1: Embed $|B|=O(\delta n)$ paths from  $\{L_1,L_2,\ldots, L_r\}$ to cover vertices in $B$.}
\medskip

Let $v$ be any vertex of $B$. We first consider the case that $v\in B\cap W$. Recall that $|B|=O(\delta n)$ and $|U|=O(\xi n)$. Then by \ref{EP6}, we can choose a vertex $v_1$ in $N^+(v,W\cup X)\backslash  (B\cup U)$. Note that $v_1$ is good as $v_1\notin B$. In the same way, if $v_1\in W$,   we can choose a vertex $v_2$ in $N^+(v_1,X) \backslash  (B\cup U)$ by \ref{EP3} and, choose $v_2\in N^+(v_1,Y) \backslash  (B\cup U)$ if $v_1\in X$.  We continue in this way until we get a directed path $P_v$ with form $W^2XYZW^2$ or $WXYZW^3$ which starts from $v$.   By symmetry, there is a directed path $P_v^{\prime}$ avoiding all used vertices and all bad vertices except $v$ with form $W^2XYZW^2$ or $W^3XYZW$ which ends at $v$.  Thus, $P_v^{\prime}$ and $P_v$  form a directed 13-path in $G$ that contains $v$ and whose endvertices lie in 
$W$.  Moreover, each of the 13-paths uses an equal number of vertices from sets $X$ and $Z$. 

The proof is analogous for the case $v\in X\cup Y\cup Z$ so we omit the details here.  Choose $|B|$ paths from $\{L_1,L_2,\ldots,L_r\}$ and embed them onto the 13-paths  we found above. By the above arguments, we have  $$\mbox{the number of remaining vertices in } X \mbox{ and } Z \mbox{ is still equal.}$$  

Set $s=r-|B|=\sqrt{\xi}n-|\mathcal{P}|-|B|$. Clearly, $s\geqslant \sqrt{\xi}n/2$ as $|\mathcal{P}|+|B|=O(\delta n)$ and $\delta\ll\xi$.  For simplicity, let $L_1,L_2,\ldots, L_s$ be the remaining $s$ directed 13-paths. 






\medskip
\noindent\textbf{Step 2: Embed paths $L_1,L_2,\ldots, L_s$  to balance the sizes of $W,X,Y$ and $Z$.}
\medskip


So far, we have successfully embedded $l-s$ paths, where all endvertices of such paths belong to $W$. Let $W^{\prime},X^{\prime},Y^{\prime},Z^{\prime}$ be the remaining sets obtained from $W,X,Y,Z$ by deleting all internal vertices of the $l-s$ paths. The argument in Step 1 shows that $|X^{\prime}|=|Z^{\prime}|$. 

Recall that each $L_i$ with $i\in[s]$ is a directed path of order 13 and each $R_i$ with $i\in[l]$ is a directed path of order 3 modulo 4 by Lemma \ref{LEM-cyclepartationfew} (iii).  Then there are $13s+3l+4h$ vertices of $C$  that have not been embedded for some $h\in \mathbb{N}$. On the other hand, as $W^{\prime}$ contains all endvertices of the $l-s$ embedde paths (of order at least 3), $G$ has  $|X^{\prime}|+|Y^{\prime}|+|Z^{\prime}|+(|W^{\prime}|-2(l-s))$ unused vertices.  Therefore, we have  $16s+4(l-s)+4h=|X^{\prime}|+|Z^{\prime}|+|Y^{\prime}|+(|W^{\prime}|-(l-s))$.  It follows by $|X^{\prime}|=|Z^{\prime}|$ that $|Y^{\prime}|+(|W^{\prime}|-(l-s))$ must be even. 

Let $2t$ be the difference between $|Y^{\prime}|$ and $|W^{\prime}|-(l-s)$. At this stage, the number of used vertices in $G$ is $|U|\leqslant |V(\mathcal{P})|+20\xi n+O(\delta n)=O(\xi n)$. Then by \ref{EP1} and the fact $\delta\ll \xi$, we have $t=O(\xi n)$. If $|W^{\prime}|-(l-s)$ is larger than $|Y^{\prime}|$, then we find $t$ directed 13-paths in the remaining $G$ with form $WXYZWXY^{2}ZW^{4}$ and, otherwise we find $t$ directed 13-paths with form  $WXYZWXY^{4}ZW^{2}$. This can be done easily by using \ref{EP3}-\ref{EP4} (recall that all bad vertices of $G$ have already been covered in Step 1) and the fact that $t=O(\xi n)$. We embed $t$ paths among $\{L_1,L_2,\ldots,L_s\}$ onto those 13-paths in $G$.

Let $W^{\prime\prime}, X^{\prime\prime}, Y^{\prime\prime}, Z^{\prime\prime}$ be the sets obtained from $W^{\prime}, X^{\prime}, Y^{\prime}$ and $Z^{\prime}$ by deleting all internal vertices of the embedded paths. Note that we already embedded $l-s+t$ paths. Next we claim that $$|X^{\prime\prime}|=|Z^{\prime\prime}| \mbox{ and } |W^{\prime\prime}|-(l-s+t)=|Y^{\prime\prime}|.$$  Here we only consider the case that $|W^{\prime}|-(l-s)=|Y^{\prime}|+2t$.  Recall that every 13-path we found in $G$ has form $WXYZWXY^{2}ZW^{4}$ and we only delete their internal vertices. Therefore, $|W^{\prime\prime}|=|W^{\prime}|-4t$, $|X^{\prime\prime}|=|X^{\prime}|-2t$, $|Y^{\prime\prime}|=|Y^{\prime}|-3t$ and $|Z^{\prime\prime}|=|Z^{\prime}|-2t$. Thus the  statements hold immediately by the fact that  $|X^{\prime}|=|Z^{\prime}|$ and $|W^{\prime}|-(l-s)=|Y^{\prime}|+2t$.







For simplicity, let $L_1,L_2,\ldots, L_{s-t}$ be the paths that have not yet been embedded. Clearly, $s-t\geqslant \sqrt{\xi}n/3$. By a similar argument, the difference between $|X^{\prime\prime}|$ and $|Y^{\prime\prime}|$ must be even. Then  one can embed $t^{\prime}=O(\xi n)$ paths from $L_1,L_2,\ldots, L_{s-t}$ onto paths in $G$ with form $(WXZ)^2(XYZ)^2W$ or $WXY^{3}ZWXYZW^{3}$ to balance  $|X^{\prime\prime}|$ and $|Y^{\prime\prime}|$. Finally, for the remaining directed 13-paths in $\{L_1,L_2,\ldots, L_{s-t}\}$, we greedily embed them onto paths in $G$ with form $(WXYZ)^3W$ by using \ref{EP3}. In the same way, we have $$|W^{\ast}|-l=|X^{\ast}|=|Y^{\ast}|=|Z^{\ast}|,$$ where $W^{\ast},X^{\ast},Y^{\ast},Z^{\ast}$ are the remaining sets of $W,X,Y,Z$ by deleting all internal vertices of $L_1,L_2,\ldots, L_l$. Figure \ref{FIG-embedfewsink} should help the reader to better understand the embedding of the cycle $C$. The two black paths in (b) correspond to the embedding of $L_1,L_2,\ldots,L_l$ and, the red and blue paths correspond to the the embedding of  $R_1,R_2,\ldots,R_l$, which will be performed in the next step.

\medskip
\noindent\textbf{Step 3: Embed paths $R_1,R_2,\ldots, R_l$ by using the Blow-up Lemma}
\medskip

Let $G^{\ast}$ be the underlying graph of $G$ with vertex set $W^\ast\cup X^\ast\cup Y^\ast\cup Z^\ast$ and edges in the ``4-cycle direction'', that is, $E(G^{\ast})=\{uv: uv\in E_G(J^{\ast},(J^+)^\ast), J\in\{W,X,Y,Z\}\}$. By Lemma \ref{LEM-cyclepartationfew} (ii), the number of vertices in $G$ that have been used at this stage is $\sum_{i=1}^l |L_i|= O(\sqrt{\xi}n)$. Moreover, by \ref{EP1} and \ref{EP3},  the following statements hold for each $J\in \{W,X,Y,Z\}$.

\medskip
(i)   $n/5\leqslant |J^\ast|\leqslant n/3$; 

(ii) For each $\sigma\in \{+,-\}$, we have $d_{G^{\ast}}(v,(J^\sigma)^\ast)\geqslant |(J^\sigma)^\ast|-O(\delta n)$ for every $v\in J^{\ast}$.

To see (ii), first recall that all bad vertices are covered in Step 1 and, there are at most $O(\delta n)$ missing edges between $v$ and $J^-\cup J^+$ in $G$ by \ref{EP3}. 
\medskip

In order to find a copy of  $C$ in $G$,  the adjacency between the two endvertices of $L_i$ and $R_i$ must be preserved when embedding $R_1,R_2,\ldots, R_l$. In other words, the endvertices of $R_i$ should be regarded as special vertices when applying the Blow-up Lemma (Lemma \ref{LEM-blowup}). By  the above property (i) and the fact $\xi\ll\alpha$, the number of these special vertices is $2l=O(\sqrt{\xi} n)\leqslant \alpha |J^{\ast}|$ for each $J\in\{W,X,Y,Z\}$. Moreover, (ii) implies that every special  $u$ has at least $|J^{\ast}|/2$ potential candidates for embedding, where $J^{\ast}$ is the set into which $u$ is supposed to be embedded.  By the fact $\delta\ll \varepsilon$ and (ii)  again,  it is not difficult to check that $(J^{\ast},(J^+)^{\ast})$ is $(\varepsilon,1/2)$-superregular for each $J\in\{W,X,Y,Z\}$.

Recall that Lemma \ref{LEM-cyclepartationfew} (iii) shows that deleting all internal vertices of $L_i$ and $L_{i+1}$ produces a directed path that contains $R_i$ and whose lengths are modulo 4. Thus the paths  $R_1,R_2,\ldots,R_l$ can be embedded into $G$ by applying the Blow-up Lemma with  $d=1/2,\Delta=2,c=1/2,k=4$ such that their endvertices (i.e., the special vertices) are properly embedded.  More precisely, the paths can be embedded as follows: if the direction of $R_i$
coincides with that of $C$, then the initial vertex of $R_i$ is embedded in $X^\ast$ and $R_i$ is embedded onto a path in $G$ with form $XYZ(WXYZ)^{j}$ for some $j$. See the blue path in Figure \ref{FIG-embedfewsink} (b) for an illustration. For the case that the two directions are opposite, then $R_i$ is embedded  onto a path with form $ZYX(WZYX)^{j}$, see the red path in Figure \ref{FIG-embedfewsink} (b) for an illustration. 
\end{proof}

We now end this section by giving the proof of Lemma \ref{LEM-cyclepartationfew}.

\begin{proof}[\textbf{Proof of Lemma \ref{LEM-cyclepartationfew}}]

First observe that the number of sinks and sources of the cycle $C$ are equal, that is, $C$ has $2\sigma(C)$ sinks and sources. Let $s_1,s_2,\ldots,s_{2\sigma(C)}$ be a list of sinks and sources of $C$ along the clockwise order. As $\sigma(C)\leqslant \xi n$ and $\xi\ll 1$, we may w.l.o.g assume that $dist_C(s_{2\sigma(C)},s_1)\geqslant 10$.

For the vertex $s_1$, let $i\geqslant 1$ be the smallest index so that $dist_C(s_i,s_{i+1})\geqslant 10$. Choose a vertex $w$ in $C[s_i,s_{i+1}]$ such that $1< dist_C(s_i,w)\leqslant 5$ and $dist_C(w,s_{i+1})\equiv 0 \pmod 4$.  Let $P_1=C[s_1,w]$ and $Q_1=C[w^+,s_{i+1}^-]$.  Clearly, $|P_1|\geqslant 3$ and $|Q_1|\geqslant 3$. In the same way, for the vertex $s_{i+1}$, choosing the smallest index $j\geqslant i+1$ such that $dist_C(s_{j},s_{j+1})\geqslant 10$, we can  obtain a path partition $(P_2,Q_2)$ of $C[s_{i+1},s_{j+1}^-]$.  We continue in this way until we obtain a path partition  $C=(P_1Q_1P_2Q_2\cdots P_rQ_r)$ for some $r$. It should be noted that  we have $Q_r=C[u^+,s_{1}^-]$ for some vertex $u^+$ as $dist_C(s_{2\sigma(C)},s_1)\geqslant 10$. To put it concisely, we split $C$ between every pair of vertices $s_i,s_{i+1}$ with $dist_C(s_{i},s_{i+1})\geqslant 10$. 

By the construction of $P_i$ and $Q_i$, we have the following:

(a) $|P_i|,|Q_i|\geqslant 3$ for each $i$ and, $\sum_{i=1}^r|P_i|\leqslant 10 \cdot 2\sigma(C)\leqslant 20\xi n$;

(b) $dist_C(P_i,P_{i+1})\equiv 0 \pmod 4$ for each $i$, where $r+1=1$;

(c) $\bigcup_{i=1}^rP_i$ covers all sinks and sources of $C$ and thus all vertices of $Q_i,i\in[r]$ are normal.

\medskip
It follows by (c) and $\sigma(C)\leqslant \xi n$ that $r=O(\xi n)$. Moreover,  (b)  implies that $|Q_i|=4t_i-1$, $i\in[t]$ for some $t_i\geqslant 1$. Then every $Q_i$ can be written as $(R_{i,1}L_{i,1}\cdots R_{i,(s_i-1)}L_{i,(s_i-1)}R_{i,s_i})$, where $s_i\geqslant 1$, $|R_{i,j}|=3,|L_{i,j}|=13$ for $j\in[s_i-1]$ and $0<|R_{i,s_i}|< 16$. Next we claim that $(P_1Q_1P_2Q_2\cdots P_rQ_r)$ can be converted into the desired partition $(L_1R_1L_2R_2\cdots L_lR_l)$ by choosing exactly $\sqrt{\xi}n$ directed 13-paths $L_{i,j}$. Note that this is possible because $\sum_{i=1}^r|Q_i|$ is sufficiently large due to  (a).  More precisely, we relabel $P_1,P_2,\ldots, P_r$ and those $\sqrt{\xi} n$  13-paths $L_{i,j}$ by $L_1,L_2,\ldots,L_{l}$ and relabel the path between $L_i$ and $L_{i+1}$ by $R_i$. Note that  $l=\sqrt{\xi} n+r\leqslant 2\sqrt{\xi}n$ as $r=O(\xi n)$. Then $(L_1R_1L_2R_2\cdots L_lR_l)$ is the desired path partition of $C$ due to (a)-(c).\end{proof}

\section{$C$ has many sinks}\label{SEC-manysink}

This section is devoted to the situation when the Hamilton cycle $C$ contains many sinks. Indeed, we will prove the following result in this section.

\begin{theorem}\label{THM-manysink}
Let $0< 1/n  \ll \xi \ll 1$. Suppose $G$ is an $n$-vertex oriented graph with $\delta^0(G)\geqslant (3n-1)/8$, then $G$ contains a copy of any $n$-vertex oriented cycle $C$ with $\sigma(C)\geqslant \xi n$.  In particular, $G$ has an antidirected Hamilton cycle. 
\end{theorem}

To prove Theorem \ref{THM-manysink}, it suffices to consider the case that $G$ has no expansion property by Theorem \ref{THM-expanderanyori}. Then by Theorem \ref{THM-extremal}, $V(G)$ has a $\delta$-extremal partition $(W,X,Y,Z)$ satisfying \ref{EP1}-\ref{EP7}.  It follows by \ref{EP1} that $|X|+|Z|\approx n/2$ and $|W|+|Y|\approx n/2$. The highly preliminary proof strategy for Theorem \ref{THM-manysink} is outlined as follows. We would like to embed half of cycle $C$ into $G[X\cup Z]$, a quarter into  $G[W]$, and another quarter into $G[Y]$, making use of the fact that these subdigraphs are nearly regular (bipartite) tournaments.  The first challenge we will face is finding
appropriate edges to connect the three parts of the embedding. Such edges are called special edges. More precisely,  an edge $e$ is called \emph{special} for $(W,X,Y,Z)$ if it belongs to $E(W\cup Z,Y\cup Z)\cup E(X\cup Y,W\cup X)$.

The next lemma shows that the degree condition $\delta^0(G)\geqslant (3n-1)/8$ ensures two disjoint special edges for $(W,X,Y,Z)$ unless $G$ has a specific structure.

\begin{lemma}\label{LEM-2matching}
Let $G$ be an $n$-vertex oriented graph  with $\delta^0(G)\geqslant (3n-1)/8$. Then every $\delta$-extremal partition  $(W,X,Y,Z)$ of $V(G)$ has a special edge. Moreover, if it has no two disjoint special edges, then there exists $v\in V(G)$ such that $G-v$ is isomorphic to the oriented graph of order $8s+2$ for some $s$ depicted in Figure \ref{FIG-degreesharp}.
\end{lemma}

By Lemma \ref{LEM-2matching}, if $G$ does not contain two disjoint special edges, then $|G|$ is odd and thus every oriented Hamiltonian cycle $C$ of $G$ is not antidirected. This is due to the fact that every antidirected cycle must be of even order. Then Theorem \ref{THM-manysink} follows immediately from Lemma \ref{LEM-2matching} and the following result.

\begin{theorem}\label{THM-specialmanysink}
Let $0< 1/n\ll\delta  \ll \xi \ll 1$ and let $G$ be an $n$-vertex oriented graph with $\delta^0(G)\geqslant 3n/8 -O(\xi n)$. Suppose $(W,X,Y,Z)$ is  a $\delta$-extremal partition of $V(G)$. If $G$ contains two disjoint special edges, then $G$ contains a copy of any $n$-vertex oriented cycle with $\sigma(C)\geqslant \xi n$. Moreover, one special edge suffices if $C$ is not antidirected.
\end{theorem}

The rest of the section is organized as follows: In Section \ref{SEC:2matching}, we first give a proof of Lemma \ref{LEM-2matching} and we show that every special edge can be extended to a well-behaved path that can be used for connection. The two key ingredients in the proof of Theorem \ref{THM-specialmanysink}, ``Lemma for $G$'' and ``Lemma for $C$'', are given in Sections \ref{SEC:lemmaG} and \ref{SEC:lemmaC} respectively, which give us a further partition of $(W,X,Y,Z)$ and a path partition of $C$. The role of the two lemmas are to obtain some special structures within $G$ and of $C$ so that
$G$ will be suitable for embedding $C$ into. Finally, the proof of Theorem \ref{THM-specialmanysink} is present in Section \ref{SEC:embedC}.

\subsection{Special edges and proper paths}\label{SEC:2matching}
First we give a proof of  Lemma \ref{LEM-2matching}.
\begin{proof}[\textbf{Proof of Lemma \ref{LEM-2matching}}]
Since $(W,X,Y,Z)$ does not contain two disjoint  special edges, either $G$ has exactly three special edges and these edges form a (not necessary directed) triangle or, all special edges for the partition are incident to the same vertex of $G$. We first consider the former case and let $T$ be the triangle. By the definition of special edges, we have $E(T)=E(W\cup Z,Y\cup Z)\cup E(X\cup Y,W\cup X)$. Let $w,x,y,z\notin V(T)$ be vertices of $W,X,Y$ and $Z$, respectively. The vertices can be found by \ref{EP1} and the fact $|V(T)|=3$. Recall that $G$ is an oriented graph. It follows by $E(X)\subseteq E(T)$ and $x\notin V(T)$ that $d(x)\leqslant|W\cup Y\cup Z|$. Similarly, we have  $d(w)\leqslant |(W\backslash\{w\})\cup X\cup Z|$, 
$d(y)\leqslant |(Y\backslash \{y\})\cup X\cup Z|$ and $d(z)\leqslant |W\cup X\cup Y|$. Then the degree condition shows  $3n-1\leqslant d(w)+d(x)+d(y)+d(z)\leqslant 3n-2$, a contradiction.

Thus it suffices to consider the case that all special edges for $(W,X,Y,Z)$ are incident to the same vertex $v$. Let $W^{\prime},X^{\prime},Y^{\prime},Z^{\prime}$ be the sets obtained from $W,X,Y$ and $Z$ by removing the vertex $v$. Observe that $d(z)\leqslant  |W^{\prime}\cup X^{\prime}\cup Y^{\prime}\cup\{v\}|$ for each $z\in Z^{\prime}$. Similarly, by considering the degree of every $w\in W^{\prime},x\in X^{\prime}$ and every $y\in Y^{\prime}$, we have  
$d(w)\leqslant  |(W^{\prime}\backslash\{w\})\cup X^{\prime}\cup Z^{\prime}\cup\{v\}|$, 
$d(x)\leqslant  |W^{\prime}\cup Z^{\prime}\cup Y^{\prime}\cup\{v\}|$ and 
$d(y)\leqslant  |Z^{\prime}\cup X^{\prime}\cup (Y^{\prime}\backslash\{y\})\cup\{v\}|$. 
Then the degree condition and these four inequalities  gives that $$8\cdot \lceil(3n-1)/8\rceil\leqslant d(z)+d(w)+d(x)+d(y)\leqslant  3n-1.$$
This means that all inequalities above must be equal, in particular, $3n-1\equiv 0\pmod 8$. Then $n=8s+3$ for some integer $s$ and $\delta^0(G)=(3n-1)/8=3s+1$. Moreover, we have $|X^{\prime}|=|Z^{\prime}|=2s+1$ and $|W^{\prime}|=|Y^{\prime}|=2s$. Then the degree condition shows that $G-v$ is isomorphic to the oriented graph of order $8s+2$ depicted in Figure \ref{FIG-degreesharp}.

Now we claim that $(W,X,Y,Z)$ has at least one special edge. Since $G$ is oriented and $|W^{\prime}|=2s$, there exist vertices $w_1,w_2\in W^{\prime}$  with $d^-(w_1,W^{\prime})\leqslant s-1$ and $d^+(w_2,W^{\prime})\leqslant s-1$. Moreover, as $3s+1\leqslant d^-(w_1)\leqslant d^-(w_1,W^{\prime})+|Z^{\prime}\cup\{v\}|$ and  $|Z^{\prime}|=2s+1$, the vertex $v$ must be an inneighbor of $w_1$. Similarly, $v$ must be an outneighbor of $w_2$  and there exist $y_1,y_2\in Y^{\prime}$ with $vy_1,y_2v\in E(G)$. Considering the original partition $(W,X,Y,Z)$ of $V(G)$, no matter which set $v$ belongs to, one of the edges in $\{vw_1,w_2v,vy_1,y_2v\}$ is special for $(W,X,Y,Z)$. 
\end{proof}

The degree condition in Lemma \ref{LEM-2matching} is best possible. Indeed, let $G$ be the oriented graph in Figure \ref{FIG-degreesharp} with $n=8s+6$. Clearly, $G$ is $(3n-2)/8$-regular and its vertex partition $(W,X,Y,Z)$ has no special edges.


\begin{definition} \emph{(Proper path)}\label{DEF-goodpath}
Let $(W,X,Y,Z)$ be a $\delta$-extremal partition of $V(G)$.  A path $P=v_1v_2\cdots v_{l}$ is called a $(W,Y)$-proper path if 
\begin{itemize}
	
	\item  $P$ is an antidirected out-path of $G$, i.e., $v_1v_2\in E(G)$;
	
	\item $v_1$ is a good vertex in $W$ and $v_{l}$ is a good vertex in $Y$.
\end{itemize}
\end{definition}

Next we show that every special edge can be extended to a proper path that can be used for connection. 

\begin{lemma}\label{LEM-8specialpath}
Let $0<1/n \ll \delta \ll 1$ and let $G$ be an $n$-vertex oriented graph with a $\delta$-extremal partition $(W,X,Y,Z)$.  For each special edge $uv$ for $(W,X,Y,Z)$, there is a $(W,Y)$-proper 13-path  
containing $uv$ in $G-S$ for any $S\subseteq V(G)\backslash \{x,y\}$ with $|S|=O(\delta n)$.
\end{lemma}

\begin{proof} 
Let $B$ be the set of bad vertices in $V(G)\backslash\{u,v\}$. It follows by \ref{EP3}-\ref{EP5} that $|B|=O(\delta n)$.  By the definition of special edge, we have $uv\in E(W\cup Z,Y\cup Z)\cup E(X\cup Y,W\cup X)$. Here, we only consider the case where $uv \in E(X\cup Y, W\cup X)$,  the case of $uv\in E(W\cup Z, Y\cup Z)$ can be handled similarly, and we omit the details. 

By \ref{EP6} and \ref{EP7}, we have $d^+(u, Y \cup Z) \geqslant n/50-O(\delta n)$ and $d^-(v, W \cup Z) \geqslant n/50-O(\delta n)$. 
As $|S\cup B|=O (\delta n)$, we can choose vertices $u_1\in N^+(u,Y\cup Z)\backslash (S\cup B)$ and $v_1\in N^-(v,W \cup Z)\backslash (S\cup B)$. Note that both $u_1$ and $v_1$ are good as they do not belong to $B$. Thus  \ref{EP3}-\ref{EP5} show that we can choose  $u_2\in N^-(u_1,Y)\backslash (S\cup B)$ and $v_2\in N^+(v_1,W)\backslash (S\cup B)$. We continue in this way until we get the desired $(W,Y)$-proper 13-path with form $W^3v_2v_1vuu_1u_2Y^4$.
\end{proof}

\subsection{Lemma for $G$: A further partition of $V(G)$}\label{SEC:lemmaG}

\begin{lemma}[Lemma for $G$]\label{LEM-splitgraph}
Let $0<1/n\ll \delta\ll1/k \leqslant \varepsilon \ll d\ll \mu\ll 1$. Suppose $G$ is an $n$-vertex oriented graph with a $\delta$-extremal partition $(W,X,Y,Z)$. Then there exist partitions $(J_0,J_1,\ldots,J_k)$ of $J\in\{W,X,Y,Z\}$ satisfying  the following. 
\begin{enumerate}[label =\upshape \textbf{(G\arabic{enumi})},ref=\upshape (G\arabic{enumi})]
	\setlength{\itemindent}{1.5em}
	\item $\mu n\leqslant |J_0|\leqslant 2\mu n$ and $J_0$ contains all bad vertices in $J$;\label{G1}

	\item $|J_i|=m$ for each $i\in[k]$, where $n/(5k)\leqslant m\leqslant n/(4k)$;\label{G2}
	
	\item $d^\pm(v,J_0) = (|J_0|/|J|)d^\pm(v,J) \pm O(dn)$  for each $v\in V(G)$;\label{G3}
	
	\item $d^\pm(v,K_i) = (m/|K|)d^\pm(v,K) \pm O(\delta n)$  for each $v\in V(G)$, $K\in\{W,Y\}$ and $i\in[k]$;\label{G4}

	\item there exists $k^{\ast}$ with $k/4\leqslant k^{\ast}\leqslant 3k/4$ such that $(Z_i,X_i)$ is $(\varepsilon,d)$-superregular for each $i\in [k^{\ast}]$ and $(X_i,Z_i)$ is $(\varepsilon,d)$-superregular for each $i\in[k]\backslash[k^{\ast}]$.\label{G5}
\end{enumerate}
\end{lemma}
\begin{proof} Let $M$ be the constant given in the Diregularity Lemma with parameters $\varepsilon^2,M^\prime=1/\varepsilon, M^{\prime\prime}=1$.  Let $0<1/n\ll  \delta \ll 1/M \ll \varepsilon \ll d\ll \mu\ll 1$.

Let $B$ denote the set of bad vertices in $G$. Clearly, $|B|=O(\delta n)$ due to \ref{EP3}-\ref{EP5}. To ensure that the final partition satisfies \ref{G1}, we first assign all vertices    in $B\cap J$ to $J_0$. However, the degrees of  $v\in V(G)$ in $G$ cannot be proportionally preserved in $G[B]$. Thus to ensure that \ref{G3} holds, we randomly select a linear number of vertices in $J$ and assign them to $J_0$. Indeed, applying Lemma \ref{LEM-randompartition} with parameter $\delta$, there exists $S_J\subseteq J\backslash B$ of size $\mu n$ such that the following holds for each  $v\in V(G)$ as $|B|=O(\delta n)$.
\begin{equation}\label{EQU-Sandomwxyz}
	\begin{aligned}
		&d^\pm(v,S_J)=\frac{|S_J|}{|J|}d^\pm (v,J)\pm O(\delta n) \mbox{ and,} \\
		&d^\pm(v,J\backslash S_J)=\frac{|J\backslash S_J|}{|J|}d^\pm (v,J)\pm O(\delta n).
	\end{aligned}
\end{equation}
Let $J^{\prime}$ be the set obtained from $J$ by removing the vertices in $S_J\cup (B\cap J)$. It follows by \ref{EP1}, $|B|=O(\delta n)$ and $|S_J|=\mu n$ that $|J^{\prime}|=n/4\pm O(\mu n)$ and $||X^{\prime}|-|Z^{\prime}||=O(\delta n)$.  Next we show how to find the wanted partitions of $W,X,Y,Z$. Roughly speaking, the partitions of $X^\prime$ and $Z^\prime$ are derived from the Diregularity Lemma and, the partitions of $W^\prime$ and $Y^\prime$ are obtained by repeatedly applying  Lemma \ref{LEM-randompartition}. 

Firstly, we give the partitions of $X$ and $Z$.  Applying the Diregularity Lemma to $G[X^\prime\cup Z^\prime]$ with $\varepsilon^2,2d,M^\prime=1/\varepsilon$ and $M^{\prime\prime}=1$, there is  a partition $V_0,V_1,\ldots,V_{p+q}$ of $X^\prime\cup Z^\prime$ with $p+q \leqslant M$ and  $|V_0|\leqslant \varepsilon^2 n$. Moreover, $X^\prime$ or $Z^\prime$ contains entire $V_i$ and   $|V_i|=m^{\prime}$ for each $i\in[p+q]$, where $m^\prime \geqslant n/(3M)$ as $|X^{\prime}\cup Z^{\prime}|=n/2\pm O(\mu n)$ and $|V_0|\leqslant \varepsilon^2 n$.  
For simplicity, we rename the sets as $X^{\prime}\cap V_0,X_1,\ldots, X_p, Z^{\prime}\cap V_0,Z_1,\ldots,Z_q$ depending on which set $V_i$ belongs to. The vertices of $K^{\prime}\cap V_0$ are also placed into $K_0$ for each $K\in\{X,Z\}$. Let $R$ be the reduced oriented graph of $G[X^\prime\cup Z^\prime]$ and let $R_X, R_Z$ be the vertex sets of $R$ which corresponding to the clusters in $X$ and in $Z$, respectively. Set $r=\min\{p,q\}$. Clearly, $rm^{\prime}=\min\{|X^{\prime}\backslash V_0|,|Z^{\prime}\backslash V_0|\}$ and  $|R|=|R_X|+|R_Z|=p+q\geqslant 2r$.

Next we claim  that $R$ has no edges between any two vertices of $R_Z$. Suppose to the contrary that $Z_1Z_2\in E(R)$ with $Z_1,Z_2\in R_Z$. Then $(Z_1,Z_2)$ is an $(\varepsilon^2,2d)$-regular pair in $G[X\cup Z]$ and thus $e(W\cup Z,Y\cup Z)\geqslant e(Z)\geqslant2d (m^{\prime})^2$. As $\delta\ll 1/M\ll d$ and $m^\prime \geqslant  n/(3M)$, we have  $e(W\cup Z,Y\cup Z)\gg \delta n^2$, which contradicts the property \ref{EP2}. Hence, there is no edge in $R[R_Z]$ and thus $d^+_R(Z_i,R_X)=d^+_R(Z_i), d^-_R(Z_i,R_X)=d^-_R(Z_i)$  for any  $Z_i\in R_Z$. Since all vertices in $X^\prime\cup Z^\prime$ are good, \ref{EP5} gives that $d^+(v,X)\geqslant |X|/2-O(\delta n)$ for each $v\in Z^{\prime}$. Thus $$d_G^+(v,X^{\prime})\geqslant d_G^+(v,X\backslash S_X)-|B|\overset{\scriptstyle\text{(\ref{EQU-Sandomwxyz})}}{\geqslant} \frac{|X\backslash S_X|}{2}-O(\delta n) \geqslant |X^\prime|/2-O(\delta n) \mbox{ for any }  v\in Z^{\prime}.$$  Then by  (\ref{EQU-orientreduced}) and  $||X^{\prime}|-|Z^{\prime}||=O(\delta n)$, for any $Z_i \in R_Z$, we have 
\begin{align*}
	d^+_R(Z_i,R_X)=d^+_R(Z_i)&\geqslant \left(\frac{e_G(Z_i,X^\prime)}{|X^\prime\cup Z^\prime||Z_i|} -(2d+3\varepsilon^2)\right)(p+q)\\
	&\geqslant \left(\frac{|X^\prime| -O(\delta n)}{2|X^\prime|+O(\delta n)} - (2d+3\varepsilon^2)\right)(p+q)\\
	&\geqslant \frac{r}{2}-5dr.
\end{align*}

By symmetry, we have  $d^-_R(Z_i,R_X)=d^-_R(Z_i)\geqslant r/2-5dr$ and thus  $d_R(Z_i,R_X)\geqslant r-10dr$ for each $Z_i \in R_Z$. Then we can greedily choose a matching $M\subseteq E(R)$ of size $k=r - 10dr$ between $R_X$ and $R_Z$ such that each of the two directions has more than $r/4 -5dr/2$ edges.  For simplicity, we write the edges in $M$ from $R_Z$ to $R_X$ as $\{Z_1X_1,\ldots,Z_{k^\ast}X_{k^\ast}\}$ and write the edges from $R_X$ to $R_Z$ as $\{X_{k^\ast+1}Z_{k^\ast+1},\ldots,X_kZ_k\}$. Clearly,  $k/4\leqslant k^\ast\leqslant 3k/4$. Put all vertices in $X_i$ with $k<i\leqslant p$ and $Z_j$ with $k<j\leqslant q$ into $X_0$ and $Z_0$, respectively. Moreover, we remove $\varepsilon^2 m^\prime$ vertices from each of the two clusters associated with an edge of $M$ and place them into $X_0$ and $Z_0$ so that the cluster pair becomes $(\varepsilon,d)$-superregular. Note that this can be done by Lemma \ref{LEM-regularsuper}.  Set $m=(1-\varepsilon^2)m^\prime$. Then $(Z_0,Z_1,\ldots,Z_k)$ and $(X_0,X_1,\ldots,X_k)$ are two partitions of $Z$ and $X$, respectively, and $|Z_i|=|X_i|=m$ for each $i\in[k]$.

We now construct the partitions of $W$ and $Y$ as follows. Let $J\in\{W,Y\}$. For each $i\in[k]$,  Lemma \ref{LEM-randompartition} shows that there is a set $J_i$ of size $m$ in $J^\prime\backslash( \bigcup_{j=1}^{i-1}J_j)=:S_i$ such that $d^{\pm}(v, J_i)\geqslant (m/|S_i|)d^{\pm}(v,S_i)-\delta n$ and $d^{\pm}(v, S_i\backslash J_i)\geqslant (1-m/|S_i|))d^{\pm}(v,S_i)-\delta n$ for each $v\in V(G)$. Note that the process above can continue $k$ times. This is because $|J^\prime| \geqslant|J|-|B|-|S_J|\geqslant n/4 -O(\delta n) -\mu n$,  $\delta \ll d$ and 
\begin{align*}
	km=(r-10dr)m&\leqslant (1-10d)\max\{|X^\prime|,|Z^\prime|\}\\
	&\leqslant (1-10d)(n/4+O(\delta n) -\mu n) \\
	&\leqslant n/4 -\mu n -2dn.
\end{align*}
Remove all remaining vertices of $J^{\prime}$ into $J_0$. Then $|J_i|=m$ for each $i\in[k]$ and $(J_0,J_1,\ldots,J_k)$ is a partition of $J$ for  $J\in \{W,Y\}$.

Next we show that for each $J\in\{W,X,Y,Z\}$, the partition  $(J_0,J_1,\ldots,J_k)$ we constructed  earlier satisfies the conditions \ref{G1}-\ref{G5}. Clearly, the conditions \ref{G2}, \ref{G4} and \ref{G5} are satisfied by the construction. Recall that $J_0$ contains all vertices in $S_J\cup (B\cap J)$ and $|S_J|= \mu n$. Thus to ensure \ref{G1} holds, it suffices to show $|J_0|\leqslant 2\mu n$. To see this, we need to compute the lower bound of $|J_1\cup J_2\cup \cdots \cup J_k|$, i.e., $km$. Recall that $r=\min\{p,q\}$, $rm^{\prime}=\min\{|X^{\prime}\backslash V_0|,|Z^{\prime}\backslash V_0|\}$ and $m=(1-\varepsilon^2)m^{\prime}$, where $V_0$ is the exceptional set given in the Diregularity Lemma. Therefore, 
\begin{align*}
	km=(r-10dr)m&\geqslant (1-10d)(1-\varepsilon^2) (\min\{|X^\prime|,|Z^\prime|\}-|V_0|)\\
	&\geqslant (1-10d)(n/4-O(\delta n) -|B|-\mu n-O(\varepsilon^2 n))\\
	&\geqslant n/4-\mu n-3dn,
\end{align*}
which implies that $|J_0|=|J|-km\leqslant O(\delta n)+\mu n+3dn$. This proves \ref{G1} due to $\delta \ll d\ll \mu$.  Moreover, the condition \ref{G3} is satisfied by the inequality (\ref{EQU-Sandomwxyz}) and the fact $|J_0\backslash S_J|\leqslant O(\delta n)+3dn=O(dn)$, which completes the proof. 
\end{proof}

The following observations follow immediately from the Lemma for $G$. 

\begin{observation}\label{OBS-degree}
Let $S\subseteq V(G)$ be a set with $|S|=O(dn)$ that contains all bad vertices of $G$.

\indent (i) Both $J_0\cup S$ and $J_0\backslash S$ satisfy \ref{G3} for each  $J\in\{W,X,Y,Z\}$.

(ii) Let $J^{\prime}$ be obtained from a collection of several sets from $\{J_0,J_1,\ldots, J_k\}$ by adding or deleting some vertices in $S$. We have $d^\pm(v,J^{\prime}) = (|J^{\prime}|/|J|)d^\pm(v,J)\pm O(dn)$ for each $J\in\{W,Y\}$ and $v\in V(G)$. In particular, if $J^\prime$ has no bad vertices, then $\delta^0(G[J^{\prime}])\geqslant |J^{\prime}|/2-O(d n)$ by \ref{EP4}.
\end{observation}

\begin{observation}\label{OBS-manypath} 
Let $0<\varepsilon\ll \xi \leqslant 1$. Suppose $J^{\prime}$ is a set of $V(G)$ with $\delta^0(G[J^{\prime}])\geqslant (3/8+\xi)|J^{\prime}|$. Let $\mathcal{P}$ be a collection of oriented paths with order at least $1/\varepsilon$ and $|V(\mathcal{P})|\leqslant \xi|J^{\prime}|/2$.   Then every path in  $\mathcal{P}$  can be embedded into $G[J^{\prime}]$ such that its copy starts and ends at the prescribed vertices. 
\end{observation}

To see Observation \ref{OBS-manypath},  we note that $G[J^{\prime}]$ is a robust $(\nu,\tau)$-outexpander due to Lemma \ref{LEM-semitoexpander}, where $\varepsilon\ll\nu\ll \tau\ll\xi$. Moreover, Lemma \ref{LEM-Connect} show that the paths in $\mathcal{P}$ can be embedded sequentially with all previously used vertices avoided.

\subsection{Lemma for $C$: A path partition of $C$}\label{SEC:lemmaC}

In this subsection, we split the cycle $C$ into a collection of subpaths, where every three consecutive subpaths belongs to one of the specifically defined types below.

\begin{definition}\label{DEF-Ptype} Let $\varepsilon$ be a constant and let $Q_{i-1},P_i,Q_{i}$ be three disjoint subpaths of an $n$-vertex oriented cycle $C$. The 3-tuple $(Q_{i-1},P_i,Q_{i})$ is said to be of 

type I,  if both $Q_{i-1}$ and $Q_{i}$ are directed and $1/\varepsilon \leqslant |P_i|\leqslant n^{1/3}$;

type II, if $Q_{i-1}$ is directed,  $v_{i}Q_{i}u_{i+1}$ is an antidirected path whose vertices are all non-normal vertices of $C$, where $v_{i},u_{i+1}$ are the vertices before and after $Q_{i}$ on $C$ respectively, and $|P_i| = 2/\varepsilon$;

type III, if $v_{i-1}Q_{i-1}u_i$ is an antidirected path whose vertices are all non-normal vertices of $C$, where $v_{i-1},u_{i}$ are the vertices before and after $Q_{i-1}$ on $C$, respectively. Moreover, $P_i$ is antidirected with $n^{1/6}\leqslant |P_i|\leqslant 2n^{1/6}$.
\end{definition}
For simplicity, we will sometimes say $P_i$ is of type I, II or III when no ambiguity arises.

\begin{lemma}[Lemma for $C$]\label{LEM-splitcyclemany}
Let $0<1/n\ll \delta  \ll 1/k \leqslant \varepsilon \ll \xi \ll 1$.  Suppose $C$ is an $n$-vertex oriented cycle with $\sigma(C)\geqslant \xi n$. Let $k^{\ast},m$ be integers with $k/2\leqslant k^{\ast}\leqslant k$ and $n/(5k)\leqslant m\leqslant n/(4k)$. Then $C$ has a path partition $(L_YQ_0P_1Q_1\cdots P_tQ_t P)$ such that 
\begin{enumerate}[label =\upshape \textbf{(C\arabic{enumi})},ref=\upshape (C\arabic{enumi}),series=mylist]
	\setlength{\itemindent}{1.5em}
	\item  $|L_Y|= \sqrt{\xi}n$, $\sigma(L_Y)\geqslant \xi^2 n$ and $|Q_i|=3$ for each $i\in [t]$;\label{C1}
	
	\item for each $i\in[t]$, the 3-tuple $(Q_{i-1},P_i,Q_i)$ is of type I, II or III;\label{C2}
	
	\item $Q_0$ is either an antidirected $13$-path whose initial vertex is a source of $C$ or it is a directed $3$-path. Moreover, $Q_0$ is antidirected if and only if $C$ is antidirected.\label{C3}
\end{enumerate}
Moreover, the paths $P_1,P_2,\ldots,P_t$ can be divided into $k+1$ families $\mathcal{P}_0,\mathcal{P}_1,\ldots,\mathcal{P}_{k}$ such that there is a partition $I_1,I_3$  of $[k]$ satisfying the following.
\begin{enumerate}[label =\upshape \textbf{(C\arabic{enumi})},ref=\upshape (C\arabic{enumi}),resume=mylist]
	\setlength{\itemindent}{1.5em}
	
	\item $\mathcal{P}_i$ consists of paths $P_j$ of type I for $i \in I_1$  and of type III for $i \in I_3$, respectively;\label{C4}

	\item $\mathcal{P}_0$ contains all paths $P_j$ of type II and $|V(\mathcal{P}_0)|\leqslant 4m$;\label{C5}
	
	\item for each $i\in I_1$, we have $|V(\mathcal{P}_i)|\in[\,3m-2\delta  n,3m-\delta  n\,]$;\label{C6}
	
	\item for each $i\in I_3\cap[k^{\ast}]$ we have $|V(\mathcal{P}_i)|\in[\,2m-2\delta  n, 2m-\delta  n\,]$ and,\\ for each $i\in I_3\backslash [k^{\ast}]$, we have  $|V(\mathcal{P}_i)|\in [\,4m-2\delta  n, 4m-\delta  n\,]$.\label{C7}
\end{enumerate}
\end{lemma}

It should be noted that $t\leqslant \varepsilon n$ holds by \ref{C2} and Definition \ref{DEF-Ptype}. Indeed, this bound follows directly from the fact that each path $P_i$ has order at least $1/\varepsilon$. Moreover, \ref{C1} and \ref{C3} imply that  $\sum_{i=0}^t|Q_i|\leqslant 3t + 13 \leqslant 4\varepsilon  n$.

\begin{proof} Since $C$ has at least $\xi n$ sinks and $\beta \ll \xi \ll 1$, a simple calculation shows that there exists a subpath $L_Y$ of $C$ with $|L_Y|=\sqrt\xi n$ containing at least $(\xi n) \sqrt\xi n/|C| \geqslant \xi^2 n$ sinks. Note that we can find such a path $L_Y$ so that the segment after $L_Y$ (along the direction of $C$) is a directed $3$-path when $C$ is not antidirected. Let $Q_0$ be the directed 3-path when $C$ is not antidirected and otherwise let $Q_0$ be the antidirected $13$-path after $L_Y$ such that its first vertex is a source of $C$. Clearly, $L_Y$ satisfies the conditions \ref{C1} and \ref{C3}.

Before presenting a path partition that satisfies \ref{C2} and \ref{C4}-\ref{C7}, we first introduce a primary partition $(L_YQ_0L_1R_1\cdots L_sR_s)$ of $C$ with some $s\geqslant 1$ as follows. Set $R_0=Q_0$. For each $i\geqslant 1$, we assign the vertices of $C$  that lie after $R_{i-1}$ to $L_i$, continuing this assignment until $|L_i|\geqslant 1/\varepsilon$ and   either a directed 3-path is encountered or the initial vertex of $L_Y$ is reached. Let $R_{i}$ denote the directed 3-path following $L_i$ if such a path exists, otherwise set $R_i=\emptyset$. Note that $|R_i|=3$ for each $i\in[s-1]$ and $|R_s|\in\{0,3\}$. 

Now we construct a new path  partition $(L_YQ_0P_1Q_1\ldots P_lQ_l)$ of $C$ by further splitting those long paths $L_j$. Here, we say that $L_j$ is long if  $|L_j|> n^{1/3}$, otherwise it is short. Clearly, $C$ has at most $n^{2/3}$ long paths $L_j$. Moreover,  the construction  of $L_j$ shows that a long $L_j$ is antidirected if we ignore its first $1/\varepsilon +1$ vertices. For each long $L_j$, we further split it into short paths such that $L_j=(L_j^{\prime}Q_j^1P_j^1\cdots Q_j^rP_j^rQ_j^{r+1}L_j^{\prime\prime})$, where $|L_j^{\prime}|=2/\varepsilon $,   $|Q_j^i|=3, |P_j^i|=n^{1/6}$ for each $i$ and $n^{1/6}\leqslant|L_j^{\prime\prime}|\leqslant 2n^{1/6}$. Then we can obtain the desired  path partition  $C=(L_YQ_0P_1Q_1\cdots P_lQ_l)$ by relabeling those paths, where $Q_l=R_s$. It is not difficult to check that for each $i\in[l-1]$, we have $|Q_i|=3$ and $(Q_{i-1},P_i,Q_i)$ is of type I, II or III. Indeed, $(Q_{i-1},P_i,Q_i)$ is of

\begin{itemize}
	\item type I, if $P_i$ is some short $L_j$ in the primary partition;
	\item type II, if $P_i$ is the first segment of some long $L_j$, i.e., $P_i=L_j^{\prime}$;
	\item type III, otherwise.
\end{itemize}
Moreover, there are at most $n^{2/3}$ paths of type II. By Definition \ref{DEF-Ptype}, every path has order at least $1/\varepsilon$ and thus $l\leqslant \varepsilon n$.  Since $|Q_i|=3$ for each $i\in[l-1]$, $|Q_0|=\{3,13\}$ and  $|Q_l|=|R_s|=\{0,3\}$, we have  $\sum_{i=0}^l|Q_i|\leqslant 3l + 13 \leqslant 4\varepsilon n.$ Observe that if we can show there exists $t \in [l]$ such that $P_1, P_2, \ldots, P_t$ meet \ref{C4}-\ref{C7}, then by setting  $P = P_{t+1}Q_{t+1}\cdots P_lQ_l$, the partition $(L_YQ_0P_1Q_1\cdots P_tQ_tP)$ will satisfy \ref{C1}-\ref{C7}. 

Next we assign paths $P_i$ into  $\mathcal{P}_0,\mathcal P_1,\ldots ,\mathcal P_{k+1}$ and find the integer $t$.  Initially, set $\mathcal{P}_i=\emptyset$ for each $i$ and set $\alpha =1$, $\beta  =2, \gamma=3$.
While $\gamma \leqslant k+2$, process $i=1,2,\ldots,l$ sequentially, and do the following.
\begin{enumerate}
	\item If $P_i$ is of type~I, then assign $P_i$ to $\mathcal P_{\alpha }$. Furthermore, after this assignment, if  $|V(\mathcal P_{\alpha })|\in[\,3m-2\delta  n,\;3m-\delta  n\,]$, then reset $\alpha=\gamma$ and $\gamma =\gamma +1$;  
	
	\item If $P_i$ is of type~II, then assign $P_i$ to $\mathcal{P}_0$;
	
	\item If $P_i$ is of type~III, then assign $P_i$ to $\mathcal P_{\beta}$. Moreover, set $\alpha =\gamma$ and $\gamma=\gamma+1$ if one of the following holds after the  assignment.
	
	$\bullet$ If $\beta  \in[k^\ast]\ \text{ and }\ |V(\mathcal P_{\beta  })|\in[\,2m-2\delta  n,\;2m-\delta  n\,]$;
	
	$\bullet$ If $\beta  \in [k+1]\backslash [k^\ast]\ \text{ and }\ |V(\mathcal P_{\beta  })|\in[\,4m-2\delta  n,\;4m-\delta  n\,]$.
\end{enumerate}
We say that $\mathcal{P}_j$ is \emph{saturated} if its number of vertices in $\mathcal{P}_j$ lies within the above intervals. By Definition \ref{DEF-Ptype}, the assignment of $P_i$ to $\mathcal{P}_j$ increases $|V(\mathcal{P}_j)|$ by $|P_i|\leqslant n^{1/3}\ll \delta  n$. Moreover, as $m\geqslant n/(5k)$ and $1/k\gg \delta $, we have $2m-2\delta  n\gg 0$.  Recall that there are at most $n^{2/3}$ paths $P_i$ of type II. Let $P_t$ be the last path assigned. We now show that $t$ is much smaller than $l$.  To see this, as $k^{\ast}\geqslant k/2$ and $m\leqslant n/(4k)$, we have   $\sum_{j=0}^{k+1}|V(\mathcal{P}_j)|=\sum_{i=1}^t|P_i|\leqslant (2
/\varepsilon)n^{2/3} +(k+1-k^\ast)4m + 3k^\ast m\leqslant 9n/10$. Since $\sum_{i=0}^l|Q_i| \leqslant 4\varepsilon n$ and $|L_Y|=\sqrt{\xi}n$, we have  $|P|=|P_{t+1}\cup Q_{t+1}\cup\cdots\cup Q_l|\geqslant n -\sqrt{\xi}n- 9n/10-4\varepsilon  n\geqslant n/20$. Recall that  $|P_i|\leqslant n^{1/3}$  and $|Q_i|\leqslant 13$ for each $i$. Hence, $t<l$ and thus the procedure can terminate.

Next we give the desired  partition  $I_1,I_3$ and $\mathcal{P}_0,\mathcal{P}_1,\ldots, \mathcal{P}_k$. Let $a,b$ be the value of $\alpha,\beta$ before the assignment of $P_t$. Clearly, $\max\{a+1,b+1\}=k+2$. Moreover,  the family into which $P_t$ is placed, w.l.o.g say $\mathcal{P}_a$, will become saturated after $P_t$ is assigned, whereas $\mathcal{P}_{b}$ remains unsaturated. Relabeling $\mathcal{P}_1,\mathcal{P}_2,\ldots, \mathcal{P}_{k+1}$, we may assume that every $\mathcal{P}_j$ with $j\in[k]$ is saturated. We remove all paths of $\mathcal{P}_{k+1}$ into $\mathcal{P}_0$.  Let $I_1,I_3\subseteq [k]$ denote the sets of indices of all  $\mathcal{P}_j$ that consist of paths of type I and type III, respectively.  Then $I_1,I_3$ is the desired partition of $[k]$ and $\mathcal{P}_0,\mathcal{P}_1,\ldots, \mathcal{P}_k$ are the wanted families satisfying \ref{C4}-\ref{C7} as $|V(\mathcal{P}_0)|\leqslant (2/\varepsilon)n^{2/3}  +|V(\mathcal{P}_{k+1})| \leqslant 4m$. This completes the proof.
\end{proof}

\subsection{Proof of Theorem \ref{THM-specialmanysink}}\label{SEC:embedC}

For convenience, let us first recall the statement of the theorem. 

\medskip
\noindent\textbf{Theorem \ref{THM-specialmanysink}.} \textit{Let $0< 1/n\ll\delta  \ll \xi \ll 1$ and let $G$ be an $n$-vertex oriented graph with $\delta^0(G)\geqslant 3n/8 -O(\xi n)$. Suppose $(W,X,Y,Z)$ is  a $\delta$-extremal partition of $V(G)$. If $G$ contains two disjoint special edges, then $G$ contains a copy of any $n$-vertex oriented cycle with $\sigma(C)\geqslant \xi n$. Moreover, one special edge suffices if $C$ is not antidirected.}
\medskip

In what follows, we establish the proof of this theorem. Before delving into the proof, we first provide some basic analysis. All parameters, with the following order, will be used throughout the section.  $$0<1/n\ll \delta \ll  1/k\leqslant\varepsilon\ll d\ll \mu\ll \xi\ll 1.$$

Let $G$ be an $n$-vertex oriented graph with $\delta^0(G)\geqslant 3n/8 -O(\xi n)$ and let $C$ be any $n$-vertex oriented cycle with $\sigma(C)\geqslant \xi n$. Suppose $(W,X,Y,Z)$ is a $\delta$-extremal partition of $V(G)$ such that it has a special edge and moreover, it has two disjoint special edges if $C$ is antidirected. From Lemma \ref{LEM-8specialpath}, every special edge is contained in a $(W,Y)$-proper 13-path of $G$. Moreover,  any two disjoint special edges can be extended to two disjoint $(W,Y)$-proper 13-paths. Indeed, these two paths can be constructed sequentially so that the second path avoids all vertices of the first one. This means that $G$ has a  proper 13-path $P^{\ast}$, moreover, it has two disjoint proper 13-paths $P^{\ast}, P^{\ast\ast}$ if $C$  is antidirected. Let $\hat{W},\hat{X},\hat{Y},\hat{Z}$ be the resulting sets obtained from $W,X,Y$ and $Z$ by deleting all vertices of $P^{\ast}$  and $P^{\ast\ast}$. Clearly, $(\hat{W},\hat{X},\hat{Y},\hat{Z})$ is a vertex partition of the resulting graph $\hat{G}$ which satisfies \ref{EP1}-\ref{EP7}.  Applying  Lemma \ref{LEM-splitgraph} to $\hat{G}$ with $n=\hat{n}:=|\hat{G}|$, there is a partition $(\hat{J_0},\hat{J_1},\ldots,\hat{J_k})$ of  $\hat{J}$ satisfying \ref{G1}-\ref{G5} for each $J\in  \{W,X,Y,Z\}$.

Set $G^{\ast}=G-P^{\ast}$. For each $J\in\{W,X,Y,Z\}$, set $J^{\ast}=J\backslash V(P^{\ast})$ and $J_i=\hat{J_i}$ for $i\in[k]$ and, let  $J_0=\hat{J_0}\cup (V(P^{\ast\ast})\cap J)$ if $C$ is antidirected  and otherwise let $J_0=\hat{J_0}$.  In other words, if $C$ is antidirected, then we incorporate all vertices of $V(P^{\ast\ast})\cap J$ into $J_0$. It is not difficult to check that  $(W^{\ast},X^{\ast},Y^{\ast},Z^{\ast})$ is a $\delta$-extremal partition of $V(G^{\ast})$ and  $(J_0,J_1,\ldots, J_k)$ is a partition of $J^{\ast}$  satisfying \ref{G1}-\ref{G5} as each set only incorporates constant vertices.  In particular, there exists $k^{\ast}$ with $k/4\leqslant k^{\ast}\leqslant 3k/4$ such that $(Z_i,X_i)$ is $(\varepsilon,d)$-superregular for each $i\in [k^{\ast}]$ and $(X_i,Z_i)$ is $(\varepsilon,d)$-superregular for each $i\in[k]\backslash[k^{\ast}]$. We may further assume $k^{\ast}\geqslant k-k^\ast$, i.e., $k^{\ast}\geqslant k/2$, by reversing all edges of $G^{\ast}$ and swapping the labels of $W^{\ast}$ and $Y^{\ast}$.

Due to \ref{G2} and the fact that $\sigma(C)\geqslant \xi n$,  Lemma \ref{LEM-splitcyclemany} implies that there is a path partition $C=(L_YQ_0P_1Q_1\cdots P_tQ_t P)$ satisfying \ref{C1}-\ref{C7}. In the following, we divide the proof of Theorem \ref{THM-specialmanysink} into the following two claims. The first one shows that we can embed the subpath $Q_0P_1Q_1\cdots P_tQ_t$ of $C$ into $G^\ast$ very effectively and the second one shows that there is a copy of $PL_Y$ in the remaining $G$ and thus there is a copy of $C$ in the whole $G$.

Suppose $w^{\ast}$ is the vertex after $Q_t$ and $y^{\ast}$ is the vertex before $Q_0$  on the cycle $C$, respectively. In other words,  $w^{\ast}$ is the initial vertex of $P$ and $y^{\ast}$ is the terminal vertex of $L_Y$.

\begin{claim}\label{CLM-embedpath}
There is a copy $H^{\ast}$ of $y^{\ast} Q_0P_1Q_1\cdots P_tQ_tw^{\ast}$ in $G^{\ast}$. Suppose the vertices $w^{\ast}, y^{\ast}$ are embedded onto $w^{\ast}_G$ and $y^{\ast}_G$, respectively. For each $J\in\{W,X,Y,Z\}$, let $J^\prime$ be the set obtained from $J$ by removing all internal vertices of $H^{\ast}$. Then the following holds.
\begin{enumerate}[label =\upshape \textbf{(H\arabic{enumi})},ref=\upshape (H\arabic{enumi})]
	\setlength{\itemindent}{1.5em}
	\item  $w^{\ast}_G\in W_0,y^{\ast}_G\in Y_0$ are two good vertices;\label{H1}

	\item   $|W^{\prime}|\geqslant \mu n/2$, $3\mu n\geqslant |X^{\prime}|,|Z^{\prime}|\geqslant \mu n/2$ and $|Y^{\prime}|\geqslant n/20$;\label{H2}

	\item $d^\pm(v,J^\prime)=(|J^\prime|/|J|)d^\pm(v,J) \pm O(d n)$ for any $v\in V(G)$.\label{H3}

\end{enumerate}
\end{claim}

It should be noted that $w^{\ast}_G\in W^{\prime}, y^{\ast}_G\in Y^{\prime}$ since we only remove all internal vertices of $H^{\ast}$. The property \ref{H2} shows that the embedding of $y^{\ast} Q_0P_1Q_1\cdots P_tQ_tw^{\ast}$ covers nearly  all vertices of $X^{\ast},Z^{\ast}$. On the other hand,  the remaining sets of $W,X$ and $Z$  still contain a linear number of vertices, while  the remaining set of $Y$ is of significant size. Moreover, the degrees are preserved proportionally within these resulting subsets by \ref{H3}.

\begin{claim}\label{CLM-connectcycle}
Let $G^{\prime}=G[W^{\prime}\cup X^{\prime}\cup Y^{\prime}\cup Z^{\prime}\cup V(P^{\ast})]$. Then $G^\prime$ contains a copy $H^{\prime}$ of $PL_Y$  which starts from $w^{\ast}_G$ and ends at $y^{\ast}_G$.
\end{claim}

\begin{proof}[\textbf{Proof of Claim \ref{CLM-embedpath}}]
Recall that the paths $P_1,P_2,\ldots,P_t$ can be divided into $k+1$ families $\mathcal{P}_0,\mathcal{P}_1,\ldots, \mathcal{P}_k$ by Lemma \ref{LEM-splitcyclemany}. Moreover, by \ref{G5} and the arguments before Claim \ref{CLM-embedpath}, there exists $k^{\ast}$ with $k/2\leqslant k^{\ast}\leqslant 3k/4$ such that $(Z_i,X_i)$ is $(\varepsilon,d)$-superregular for each $i\in [k^{\ast}]$ and $(X_i,Z_i)$ is $(\varepsilon,d)$-superregular for each $i\in[k]\backslash[k^{\ast}]$. It follows by \ref{EP3}-\ref{EP5} that $WXYZ$ is close to a blowup of the directed 4-cycle. Thus for each $i,j\in[k]$, it is not difficult to check that each of $(W_i,X_j)$, $(X_i,Y_j)$, $(Y_i,Z_j)$ and $(Z_i,W_j)$ is $(\varepsilon,d)$-superregular as every vertex is good by \ref{G1}. It is natural to consider that, by using Lemma \ref{LEM-randomwind} or the Blow-up Lemma, the paths in $\mathcal{P}_i$ can be embedded at random by ``winding'' around  one of ``triangles'' $Z_iX_iY_iZ_i,X_iZ_iW_iX_i$ or one of ``edges'' $Z_iX_i, Z_iW_i$ and $W_iX_i$ in $G^{\ast}$. Based on this, we divide the proof into the following three steps. In Step 1, we prepare for the idea stated above. Specifically, we introduce a strategy for paths in 
$\mathcal{P}_i$ to wind around a triangle or an edge. Then we first embed the connecting paths $Q_0,Q_1,\ldots,Q_t$ in Step 2 and, in Step 3 we give an embedding of paths $P_1,P_2,\ldots, P_t$.

\medskip
\noindent\textbf{Step 1: The assignment of the paths $P_1,P_2,\ldots,P_t$.}
\medskip

It is worth noting that in this step, we do not embed  $P_1,P_2,\ldots,P_t$ into $G^{\ast}$, instead,  we perform the winding operation on these paths in a virtual graph whose vertex set is $\{J_i: 0\leqslant i\leqslant k, J\in\{W,X,Y,Z\}\}$, and where  every edge $J_iK_i$ corresponds to a superregular pair $(J_i,K_i)$ in $G$ with $J\neq K\in\{W,X,Y,Z\}$.

Recall that $\mathcal{P}_i$ with $i\in I_1$ (resp., with $i\in I_3$) consists of paths of type I (resp., type III) due to Lemma \ref{LEM-splitcyclemany}. Moreover,  Definition \ref{DEF-Ptype} shows that every path of type III is an antidirected path. Now we formally present the winding strategy,  see Figure  \ref{FIG-wind}  for an illustration.

\begin{enumerate}[label =\upshape \textbf{(S\arabic{enumi})},ref=\upshape (S\arabic{enumi}),series=mylist1]
	\setlength{\itemindent}{1.5em}
	\item Assign all paths of $\mathcal{P}_0$ to $W_0$;\label{S1}
	
	\item For each $i\in I_1\cap [k^\ast]$, assign all paths of $\mathcal{P}_i$ to $Z_iX_iY_iZ_i$ by applying Lemma \ref{LEM-randomwind};\label{S2}
	
	\item For each $i\in I_1\backslash [k^\ast]$, assign all paths of $\mathcal{P}_i$ to $X_iZ_iW_iX_i$ by applying Lemma \ref{LEM-randomwind};\label{S3}
	
	\item For each $i\in I_3\cap [k^\ast]$, assign all paths in $\mathcal{P}_i$ to $Z_iX_i$ in such a way that every source  of $C$ on these paths lies in  $Z_i$; \label{S4}
	
	\item  For each $i\in I_3\backslash [k^\ast]$, choose two unused $W_{i_1},W_{i_2}$ with $i_1\neq i_2\in [k]$ and, assign paths in $\mathcal{P}_i$ to one of $Z_iW_{i_1}$ and $W_{i_2}X_i$ so that  every source  of $C$ on these paths lies in $Z_i$ or $W_{i_2}$.\label{S5}
\end{enumerate}

It should be noted that only the assignments in \ref{S3} and \ref{S5} use vertices $W_j$. As $k^{\ast}\geqslant k/2$, the assignments above use at most $2(k- k^{\ast})\leqslant k$ vertices $W_j$  in the virtual graph.

Next we estimate how many times each vertex of the virtual graph is traversed. Clearly, \ref{S1} and \ref{C5} show that the vertex $W_0$ of the virtual graph is used $|V(\mathcal{P}_0)|\leqslant 4m$ times. For each $\mathcal{P}_i$ with $i\in I_1$,  every path $P_j\in \mathcal{P}_i$ is of type I with  $1/\varepsilon \leqslant|P_j|\leqslant n^{1/3}$ by Definition \ref{DEF-Ptype}.  Furthermore, as indicated by \ref{C6}, we have $|V(\mathcal{P}_i)|\in [3m-2\delta  n,3m-\delta  n]$. Applying Lemma \ref{LEM-randomwind} with $\varepsilon = \delta ^2$, each of vertices $W_i,X_i,Y_i,Z_i$ in the virtual graph is traversed $|V(\mathcal{P}_i)|/3\pm \delta ^2 n$ times after the assignment in \ref{S2} or \ref{S3}. More precisely,  the number of times each of those vertices is traversed lies between $m-2\delta n$ and $m-\delta   n/5$.

For each $\mathcal{P}_i$ with $i\in I_3$, we get that every path $P_j\in \mathcal{P}_i$ is an antidirected path of type III with $n^{1/6}\leqslant |P_j|\leqslant 2n^{1/6}$ by \ref{C4} and Definition \ref{DEF-Ptype}. Moreover, by \ref{C7}, for each $i\in I_3\cap[k^{\ast}]$ we have $|V(\mathcal{P}_i)|\in[\,2m-2\delta  n, 2m-\delta  n\,]$  and thus there are at most $2m/n^{1/6}\leqslant 2\varepsilon n^{5/6}\ll \delta ^2 n$ paths in $\mathcal{P}_i$.  On the other hand,  the assignment in \ref{S4} shows that the number of times each path in $\mathcal{P}_i$ with $i\in I_3\cap [k^{\ast}]$ passes through the vertex $Z_i$ and the vertex $X_i$ differs by at most 1. Therefore, after the assignment in \ref{S4}, the number of times each of $Z_i$ and $X_i$ is traversed lies between $m-2\delta   n$ and $m-\delta   n/5$. Similarly, for each $\mathcal{P}_i$ with $i\in I_3\backslash [k^{\ast}]$, the number of times each of $Z_i, W_{i_1},W_{i_2}$ and $X_i$ is traversed lies between $m-2\delta   n$ and $m-\delta   n/5$.

\medskip
\noindent \textbf{Step 2: An embedding of paths  $Q_0,Q_1,\ldots,Q_t$.}
\medskip

Note that the adjacency between endvertices of $P_{j},P_{j+1}$ and $Q_{j}$ must be preserved when embedding those  paths.  This implies that we need to embed $Q_j$ in accordance with the winding strategy of $P_j,P_{j+1}$ mentioned in Step 1. Let $u_j,v_j$ be the initial and terminal vertices of $P_j$, respectively. Set $v_0=y^{\ast}$ and $u_{t+1}=w^{\ast}$, where $w^{\ast},y^{\ast}$ are given in the statement of the claim. Then $Q_j$ is a $(v_j^+,u_{j+1}^-)$-path for each $0\leqslant j\leqslant t$. 

Let $B$ be the set of bad vertices in $G$. (Note that the endvertices of special edges in $P^{\ast},P^{\ast\ast}$ may be bad vertices, however, these endvertices are excluded from our consideration of bad vertices in the proof.) It follows by \ref{EP3}-\ref{EP5} that $|B|=O(\delta n)$. A rough idea is to embed $Q_0,Q_1,\ldots,Q_t$ into $G^{\ast}[(W_0\cup X_0\cup Y_0\cup Z_0)\backslash B]$ and, moreover, embed $Q_0$ onto $P^{\ast\ast}$ if $P^{\ast\ast}$ exists. Note that this is possible as $\sum_{i=0}^t|Q_i|\leqslant 4\varepsilon n$ by Lemma \ref{LEM-splitcyclemany} and the ``4-cycle direction'' degrees of $G^{\ast}[(W_0\cup X_0\cup Y_0\cup Z_0)\backslash B]$, as well as the degrees in $G^{\ast}[W_0\backslash B]$ and $G^{\ast}[Y_0\backslash B]$ are large enough by Observation \ref{OBS-degree} (i) and $\delta\ll\varepsilon\ll d$. Before giving an embedding of $Q_0,Q_1,\ldots, Q_t$, we first assign the vertices $w^{\ast}$ and $y^{\ast}$ as follows.
\begin{enumerate}[label =\upshape \textbf{(S\arabic{enumi})},ref=\upshape (S\arabic{enumi}),resume=mylist1]
	\setlength{\itemindent}{1.5em}
	\item Assign the vertex $y^{\ast}$ to $Y_0\backslash B$ and assign the vertex $w^{\ast}$ to $W_0\backslash B$.\label{S6}
\end{enumerate}

Next we greedily embed $Q_j$ based on the assignments of the vertices $v_j$ and $u_{j+1}$ in Step 1. For simplicity, let $Q_j=v_j^+q_ju_{j+1}^-$ if $|Q_j|=3$. It follows by \ref{C1} and \ref{C3} that $|Q_i|=3$ for each $i\in[t]$ and $|Q_0|\in\{3,13\}$. Moreover, if $|Q_0|=13$, then it is antidirected.

\medskip
\textbf{Case 1.} $Q_j$ is a directed path with $|Q_j|=3$.
\medskip

It is not difficult to check that  $v_j^+,u_{j+1}^-$ can be embedded into $(W_0\cup Y_0)\backslash B$ no matter which sets $v_j$ and $u_{j+1}$ are assigned to due to \ref{EP3} and \ref{G3}.  Then the vertex $q_j$ can be embedded properly depending on the sets $v_j^+$ and $u_{j+1}^-$ belong to. For example, if $v_j$ is assigned to some vertex $Z_i$ in Step 1, then we embed $v_j^+$ into $W_0\backslash B$ if the edge between $v_j$ and $v_j^+$ is oriented from $v_j$ to $v_j^+$ and otherwise embed $v_j^+$ into  $Y_0\backslash B$. Moreover, assume that $v_j^+$ and $u_{j+1}^-$ are embedded to $W_0\backslash B$ and $Y_0 \backslash B$, respectively, then we embed $q_j$ into  $X_0\backslash B$ if the directed path $Q_j$ is oriented from $v_j^+$ to $u_{j+1}^-$ and otherwise embed $q_j$  into  $Z_0\backslash B$.

\medskip
\textbf{Case 2.} $Q_j$ is an antidirected path.
\medskip

If $j=0$, i.e., $Q_0$ is antidirected, then \ref{C3} shows that $Q_0$ is an antidirected 13-path, where both of its endvertices are sources of $C$. Moreover,  $C$ is antidirected and thus the proper 13-path $P^{\ast\ast}$ exists by the arguments before Claim \ref{CLM-embedpath}. Then $Q_0$ can be embed onto the path $P^{\ast\ast}$ so that the copy of $Q_0$ in $G$ starts from the  endvertex of $P^{\ast\ast}$ in $Y_0\backslash B$ and ends at the other  endvertex of  $P^{\ast\ast}$ in $W_0\backslash B$.  By \ref{S6}, $y^{\ast}$ is assigned to a vertex in $Y_0\backslash B$.  Then the adjacency between $Q_0$ and $y^{\ast}$ is preserved due to Observation \ref{OBS-degree} (ii).  Next we claim that the adjacency between $Q_0$ and $ P_1$ is preserved.  By \ref{C2} and Definition \ref{DEF-Ptype}, the tuple $(Q_0,P_1,Q_1)$ is of type III. Moreover, as the endvertices of $Q_0$ are sources of $C$, the initial vertex $u_1$ of $P_1$ is a sink of $C$.  It follows by \ref{S5} that $u_1$ is assigned into $W_{i_1}\cup X_i$ for some $i,i_1\in[k]$. Recall that the terminal vertex of $Q_0$ is embedded onto a vertex in $W_0\backslash B$ and every vertex of  $W_0\backslash B$ has many outneighbors in both $W_{i_1}$ and $X_i$ by \ref{G4} and \ref{EP3}-\ref{EP4}.  Hence there are many choices for the embedding of $u_1$ (in Step 3) based on its assignment mentioned in Step 1.

For the case $j\geqslant 1$, we have $|Q_j|=3$ and thus  $Q_j=v_j^+q_ju_{j+1}^-$.  Again, as $Q_j$ is antidirected, \ref{C2} and Definition \ref{DEF-Ptype} imply that $P_{j+1}$ is of  type III (or $P_{j+1}=u_{j+1}=w^{\ast}$)  and $v_jv_j^+q_ju_{j+1}^-u_{j+1}$ is an antidirected subpath of $C$. Moreover, as $Q_j$ is not directed, $(Q_{j-1},P_j,Q_j)$, and consequently $P_j$, is of type II or III. By \ref{S1} and \ref{S4}-\ref{S6}, the vertices $v_{j}$ and $u_{j+1}$ are assigned to $(W_0\backslash B) \cup W_i\cup X_{i}\cup Z_{i}$ for some $i\in[k]$. Moreover, for each of $v_j$ and $u_{j+1}$, if it is assigned to $X_{i}$ (resp., to $Z_{i}$), then it is a sink (resp., source of $C$).

We embed $Q_j=v_j^+q_ju_{j+1}^-$ into $G^{\ast}[W_0\backslash B]$ and next we show that the embedding preserves the adjacency. Here, we only consider the case that both $P_j$ and $P_{j+1}$ are of type III, and we further assume that all$v_j,q_j$ and $u_{j+1}$ are sinks of $C$.  This implies that $v_j^+,u_{j+1}^-$ are sources of $C$. By \ref{S4}-\ref{S5}, the vertex $v_j$ is assigned to $W_{i_1}\cup X_{i}$. It follows by  $v_j^+$ is a source that $v_j^+v_j$ is an edge of $C$. Then the embedding of $v_j^{+}$ is feasible for the edge $v_j^+v_j$ as every vertex in $W_0\backslash B$  has many outneighbors in $W_{i_1} \cup X_i$. In the same way, the embedding of $u_{j+1}^-$ preserves the adjacency between $u_{j+1}^-$ and $u_{j+1}$ and thus we can embed $Q_j=v_j^+q_ju_{j+1}^-$ into $G^{\ast}[W_0\backslash B]$.

\medskip
\noindent\textbf{Step 3: An embedding of paths  $P_1,P_2,\ldots,P_t$.}
\medskip

Let $U\subseteq V(G)$ denote the set of all bad vertices and vertices used in Step 2. Then $|U|= |B|+\sum_{i=0}^t|Q_i|=O(\varepsilon n)$ by Lemma \ref{LEM-splitcyclemany}.  Recall that the embedding in Step 2 already ensures the adjacency between $P_i$ and $Q_i$. Thus to embed  $P_i$ properly, it suffices to show its  endvertices $u_i,v_i$ have many possible positions for embedding.  

Set $P_0:=y^{\ast}$ and $v_0:=y^{\ast}$. By symmetry, we only estimate the number of all possible positions of  terminal vertices $v_i$ of $P_i$ with $0\leqslant i\leqslant t$.  Assume that $v_i$ is assigned to $J_j$ in Step 1 and $v_i^+$ is embedded to $K_0\backslash B$ in Step 2 for some $J,K\in\{W,X,Y,Z\}$ and $j\in[t]$. Recall that the adjacency between $v_i^+$ and $v_i$ is preserved.  This means that, for example, if $v_i$ is assigned to $W_j$ and $v_iv_i^+$ is oriented from $v_i$ to $v_i^+$, then in Step 2 we embed $v_i^+$ into $W_0\backslash B$ or $X_0\backslash B$. In both cases, \ref{G4} and $|U|= O(\varepsilon n)$ imply that the copy of $v_i^+$ in $G$ has at least $|W_j|/3$ unused inneighbors in $W_j$ and thus $v_i^+$  has $|W_j|/3$ possible positions for embedding.

Next we present a rigorous embedding for $P_1,P_2\ldots, P_t$. Recall that all paths in $\mathcal{P}_0$ are assigned to $W_0$  by \ref{S1}. For each path $P_i\in \mathcal{P}_0$, we choose an unused possible position in $W_0\backslash U$ for each of $u_i$ and $v_i$. Then we may greedily embed all paths of $\mathcal{P}_0$ in $G^{\ast}[W_0\backslash U]$ by applying Observation \ref{OBS-degree} (ii) and Observation \ref{OBS-manypath} with $J^{\prime}=W_0\backslash U$.  Furthermore, based on the assignments in \ref{S2}-\ref{S5}, we can embed paths into $\mathcal{P}_i$ with $i\in[k]$ by using the Blow-up Lemma and the arguments before Step 1. It should be noted that $t\leqslant \varepsilon n$ by Lemma \ref{LEM-splitcyclemany}  and the endvertices $u_i,v_i$ with $i\in[t]$ are $2t$ ``special vertex'' when applying the Blow-up Lemma.  Therefore, there is a copy $H^{\ast}$ of $y^{\ast} Q_0P_1Q_1\cdots P_tQ_tw^{\ast}$ in $G^{\ast}$.

Let $J^{\prime} = J\backslash V(H^{\ast})$ for $J\in\{W,X,Y,Z\}$ and let $w^{\ast}_G, y^{\ast}_G$ be the copies of $w^{\ast}, y^{\ast}$ in $G$.  Then \ref{H1} clearly holds by \ref{S6}. It follows by \ref{G1} and \ref{C5}  that $\mu n\leqslant |J_0|\leqslant 2\mu n$ and $|V(\mathcal{P}_0)|\leqslant 4m\leqslant 4\varepsilon n$. Moreover, \ref{C1} and \ref{C3} imply that  $\sum_{i=0}^t|Q_i|\leqslant 3t + 13 \leqslant 4\varepsilon  n$.  Recall that we only embed some paths in $\mathcal{P}_0$ and $\cup_{i=0}^t Q_i$ into $G^{\ast}[J_0]$. Hence $|J^{\prime}|\geqslant |J_0|-4\varepsilon n-4\varepsilon  n\geqslant \mu n/2$. On the other hand, by the arguments below \ref{S1}-\ref{S5} in Step 1, for each $K\in\{X,Z\}$ and $i\in[k]$, there are at most $2\delta  n$ unused vertices in $K_i$ and thus $|K^{\prime}|\leqslant |K_0|+2k\delta  n\leqslant 3\mu n$.  Note that only the assignment in \ref{S2} uses sets $Y_i$ of $Y$. Since $k^{\ast}\leqslant 3k/4$ and $km\leqslant |Y|$, we have that $|Y^{\prime}|\geqslant |Y|-|Y_0|-k^{\ast}m\geqslant n/4-3n/16-O(\mu n)\geqslant n/20$, which proves \ref{H2}. Note that for each $J\in\{W,X,Y,Z\}$,  every set $J_i$ with $i\in[k]$ has at least $m-2\delta  n$ vertices that either all used or all unused in the assignment/embedding process in Step 1/3. Moreover, we use at most $|V(\mathcal{P}_0)|+\sum_{i=0}^t|Q_i| \leqslant 8\varepsilon n$ vertices of $W_0\cup X_0\cup Y_0\cup Z_0$ in the embedding process. Thus \ref{H3} follows by Observation \ref{OBS-degree} with $|S|\leqslant 4k\times (2\delta  n)+5\varepsilon n=O(dn)$, which completes the proof.
\end{proof}

We next end this section by giving the proof of  Claim \ref{CLM-connectcycle}.

\begin{proof}[\textbf{Proof of Claim \ref{CLM-connectcycle}}]

Recall that $C$ has a partition $(L_YQ_0P_1Q_1\cdots P_tQ_t P)$ due to  Lemma \ref{LEM-splitcyclemany}. Moreover, by Claim \ref{CLM-embedpath}, there is a copy $H^{\ast}$ of $y^{\ast} Q_0P_1Q_1\cdots P_tQ_tw^{\ast}$ in $G^{\ast}$ which starts from $y^{\ast}_G\in Y^{\prime}$ and ends at $w^{\ast}_G\in W^{\prime}$, where $w^{\ast}$ is the initial vertex of $P$ and $y^{\ast}$ is the terminal vertex of $L_Y$. Therefore, to embed $C$ into $G$, it suffices to embed $PL_Y$ in the remaining oriented graph $G^{\prime}$ such that the copy of $PL_Y$ starts from $w^{\ast}_G$ and ends at $y^{\ast}_G$.

We first give a further partition $P=L_WP_WLP_Y$ of $P$ as follows: Let $L_W$ be the first $dn$ vertices of $P$ and let $P_W$ be the subpath of $P$ after $L_W$ such that $|L_W|+|P_W|=|W^\prime|+\sqrt{\delta} n$. Note that it is possible as $|P|+|L_Y|=|G^{\prime}|\geqslant |W^{\prime}|+n/20$ by \ref{H2} and \ref{C1}. Moreover, if the 12-path after $P_W$ contains a normal vertex of $C$, say $x$, then we extend $P_W$ to $x^-$ and let $L=x$. Otherwise,  every vertex of this 12-path is either a sink or a source of $C$, then we extend $P_W$
by at most one vertex to yield an antidirected 11-path $L$, whose initial vertex is a sink of $C$. Let $P_Y$ denote the remaining subpath of $P$. 

Recall that $w^{\ast}$ is the initial vertex of $L_W$ and $y^{\ast}$ is the terminal vertex of $L_Y$. For simplicity, we assume that $P_W$ starts from $w^2$ and ends at $w^1$ and, $P_Y$ starts from $y^1$ and ends at $y^2$.  By the above construction, we have the following:
\begin{enumerate}[label=\upshape\textbf{(\alph*)},ref=\upshape(\alph*)]
	\item $|W^{\prime}|+\sqrt{\delta}n\leqslant|L_W|+|P_W|\leqslant |W^{\prime}|+\sqrt{\delta}n+12$ and $|L_W|=dn$;\label{a}
	
	\item $w^1Ly^1$ is either a directed 3-path or, an antidirected out-path of order 13;\label{b}
	
	\item $L_W$ contains either $\varepsilon  n$ disjoint directed $13$-paths or $\varepsilon  n$ sinks of $C$.\label{c}
\end{enumerate}

To see (c),  note that the numbers of sinks and sources  in any oriented path differ by at most 2. Moreover, removing all sinks and sources of $C$ from  $L_W$ results in a collection of  directed paths.  If $L_W$ contains neither $\varepsilon n$ disjoint  directed $13$-paths nor  $\varepsilon n$ sinks,  we have $|L_W| \leqslant (2\varepsilon  n +2)+ 13\varepsilon  n + 12\times (2\varepsilon  n +3)< d n$, a contradiction.

Recall that $B$ is the set of bad vertices of $G$. We first update the set $B$ and embed the path $L$ into $G^{\prime}$ as follows.  If $w^1Ly^1$ is antidirected, we regard $P^\ast$ as a copy of $w^1Ly^1$.  For the case $w^1Ly^1$ is a directed 3-path, we put all vertices of $P^{\ast}$ into $B$ and embed $w^1Ly^1$ into $G^{\prime}$ with form $WXY$ or $WZY$ such that its initial and terminal vertices belong to $W^{\prime}\backslash B, Y^{\prime}\backslash B$, respectively. This is possible due to \ref{EP3}, \ref{H3} and the fact $|B|=O(\delta n)$. We use $w^1_G$ and $y^1_G$ to denote the copies of $w^1$ and $y^1$ in $G^{\prime}$, respectively.

Let $U\subseteq V(G^{\prime})$ denote the set of vertices used in the embedding of $L$. Let  $B^{\sigma}=\{x\in B:d^{\sigma}(x,W^\prime)\geqslant \sqrt{d}n\}$, where $\sigma\in\{+,-\}$. Clearly, $|U\cup B^+\cup B^-|=O(\delta n)$ and $\{w^1_G,y^1_G\}\cap U=\emptyset$. Next we divide the proof into the following three steps.

\medskip
\noindent\textbf{Step 1: Embed $L_W$ to cover  bad vertices in $B^+\cup B^-$.}
\medskip

Recall that $w^{\ast}$ is embedded onto the vertex $w^{\ast}_G\in G$. We further embed $w^2$, the initial vertex of $P_W$, into $G[W^{\prime}\backslash (B\cup U)]$ and say $w^2_G$ is the corresponding copy of $w^2$. It follows by \ref{H2}-\ref{H3} that each of $W^{\prime},X^{\prime},Y^{\prime}$ and $Z^{\prime}$ is of linear size and $G^\prime$ inherits the degree properties of all vertices in $G$. Moreover,  \ref{H3}, \ref{EP4} and  $| B\cup U|=O(\delta n)\ll dn$  imply that $$\delta^0(G[W^\prime\backslash  (B\cup U)])\geqslant |W^\prime\backslash(B\cup U)|/2-O(dn).$$

By \ref{c}, $L_W$ contains either $\varepsilon n$  directed 13-paths or  $\varepsilon n$ sinks. If the former case holds, we construct an embedding of $L_W$ in two steps. First we use directed 13-paths of $L_W$ to cover all bad vertices in $B^+\cup B^-$ by applying the same arguments mentioned in Step 1 of the proof of Lemma \ref{LEM-balbancecycle}. Note that the  endvertices of those 13-paths belong to the set $W^{\prime}\backslash (B\cup U)$. This is possible as $|B^+\cup B^-|=O(\delta n)$ and $L_W$ has at least $\sqrt{\delta}n$ disjoint directed 13-paths. The second step is to embed the remaining subpaths of $L_W$ into $G[W^\prime\backslash (B\cup U)]$ by applying Observation \ref{OBS-manypath}. Note that Observation \ref{OBS-manypath} requires $|L_W|=dn\leqslant \xi|W^{\prime}\backslash (B\cup U)|/2$ and, every remaining subpath to have order at least $1/\varepsilon$ and this can be satisfied by \ref{H1} and by properly selecting the $\sqrt{\delta}n$ directed 13-paths embedded in the first step as $\delta \ll \varepsilon$. 

For the case that $L_W$ contains $\varepsilon n$ sinks of $C$, it clearly contains $\varepsilon n-2$ sources of $C$. Hence, there are $\sqrt{\delta}n$ sinks (resp., sources) of $C$, where the distance between any two sinks (resp., sources) on $L_W$ is at least $2/\varepsilon$. As $|B\cup U|=O(\delta n)$,  each $x\in B^+$ has at least $\sqrt{d}n/2$ outneighbors in $W^{\prime}\backslash (B\cup U)$. Then we can greedily choose two of such outneighbors and embed one of the $\sqrt{\delta}n$ sources of $L_W$ onto the vertex $x$. In the same way, the vertices in $B^-$ can be covered by using the $\sqrt{\delta}n$ sinks of $L_W$. Therefore, all vertices of $B^+\cup B^-$ can be covered by embedding $|B^+\cup B^-|$ disjoint 3-paths with form $W(B^+\cup B^-)W$.  Since the distance between any two sinks (resp., sources) we used is at least $2/\varepsilon$, each of the remaining subpaths of $L_W$ has order at least $1/ \varepsilon$ and thus they can be embeded into the remaining $G[W^{\prime}\backslash B]$ by Observation \ref{OBS-manypath} again. More precisely, the path $L_Ww^2$ can be embedded into $G[W^{\prime}\cup B^+\cup B^-]$ with form
$w^{\ast}_G W\cdots W (B^+W\cdots W)^{|B^+|} (B^-W\cdots W)^{|B^-|}w^2_G$. It should be noted that in both cases, there is a copy of $L_Ww^2$ in $G^{\prime}$ which  starts from $w^{\ast}_G$ and ends at $w^2_G$ due to Observation \ref{OBS-manypath}.

\medskip
\noindent\textbf{Step 2: Embed $P_W$ to cover vertices in $W^\prime\backslash( B\cup U)$.}
\medskip

Update the set $U$ such that it now consists of all vertices on the copies of $L$ and $L_W$ in $G^{\prime}$. Clearly, $dn+1\leqslant |U|=|L\cup L_W|\leqslant dn+11$ by \ref{b}. Moreover, by the embedding of $L_W$, all but at most $13|B^+\cup B^-|$ vertices of $L_W$ are embedded into $G[W^{\prime}]$. This gives that   $dn -O(\delta n)\leqslant |W^\prime\cap U|\leqslant dn+11$. It follows by \ref{a} that $|W^{\prime}|-dn+\sqrt{\delta}n\leqslant|P_W|\leqslant |W^{\prime}|-dn+\sqrt{\delta}n+12$. Obviously, $|W^\prime|=|W^\prime\backslash(B\cup U)| +|W^{\prime}\cap (B\cup U)|$. Then by $|B|=O(\delta n)$ we have $$|W^\prime\backslash(B\cup U)|+\sqrt{\delta}n/2\leqslant |P_W|\leqslant |W^\prime\backslash(B\cup U)|+2\sqrt{\delta}n.$$
Let $l=|P_W|-|W^{\prime}\backslash (B\cup U)|$. Clearly, $\sqrt{\delta}n/2\leqslant l\leqslant 2\sqrt{\delta}n$.

Since $l\leqslant2\sqrt{\delta}n$ and $\delta \ll \mu$, we get a conclusion similar to \ref{c} in Step 1: Upon removing the endvertices $w^1,w^2$, $P_W$ contains either $l$ sinks of $C$ or $l$ disjoint directed $13$-paths.  For the former case, let $P_W^{\ast}$ be an auxiliary path obtained from $P_W$ by contracting $l$ sinks. In other words, $P_W^{\ast}$ is obtained from $P_W$ by replacing $s^-_is_is^+_i$ with an edge $s_i^-s_i^+$ for each $i\in[l]$, where $s_i$ is a sink. Clearly, $|P^\ast_W|=|W^\prime\backslash (B\cup U)|$. Moreover, by $|W^\prime|\geqslant \mu n/2 \gg dn$ and $|B\cup U|\leqslant 2dn$, we have $\delta^0(G[W^\prime\backslash (B\cup U)])\geqslant |W^\prime\backslash (B\cup U)|/2-O(dn)$. Thus $G[W^\prime\backslash (B\cup U)]$ contains a copy of $P^\ast_W$ which starts from $w^2_G$ and ends at $w^1_G$ by Lemmas \ref{LEM-semitoexpander} and \ref{LEM-Connect}.  Note that the copy of every vertex of $P^\ast_W$ is a good vertex in $W^{\prime}$ and thus it has at most $O(\delta n)$ missing edges to $X^{\prime}$ by \ref{EP3} and \ref{H3}. Thus for each $i\in[l]$, the copies of $s_i^-$ and $s_i^+$ have at least $|X^{\prime}\backslash (B\cup U)|-O(\delta n)$ common outneighbors in $X^{\prime}\backslash (B\cup U)$, which means that $s_i$ has many possible position to embed. Therefore, the embedding of $P_W^{\ast}$ can be extended to a copy of $P_W$ in $G^{\prime}$ due to \ref{H2} and  $|B\cup U|\leqslant 2dn\ll \mu n$. See Figure  \ref{FIG-auxiliary}  for an illustration.

\begin{figure}[H]
	\begin{center}
		\begin{tikzpicture}[black,line width=1pt,scale=0.4]
			\path (-12,6) (12,-6); 
			\draw (0,5) ellipse (6 and 1);
			\draw (7,0) ellipse (1 and 3);
			\draw (-7,0) ellipse (1 and 3);
			\draw (0,-5) ellipse (5 and 1);
			
			\foreach \i/\j in {(5.4,5)/w1,(3.6,5)/w2,(2.4,5)/w3, (0.6,5)/w4,(-0.6,5)/w5,(-2.4,5)/w6,(-3.6,5)/w7,(-5.4,5)/w8,(7,2.5)/x1,(7,-1)/x2,(-7,1)/z1,(7,1)/x3,(0,-5)/y1,(-7,-1)/z2}{\filldraw[black]\i circle (1.5pt)coordinate(\j);}

			\foreach \i/\j in {w2/w3,w4/w5,w6/w7}{\draw[line width=1.5pt,red,dashed] (\i) -- (\j);}

			\foreach \i/\j in {w2/x1,w3/x1,w4/x2,x2/z2,z2/w5,x3/y1,z1/w7,y1/z1,w6/x3} {\draw[leftlearrow={latex},line width=1.5pt,red](\i) -- (\j);}
			
			\foreach \i/\j in {w1/w2,w3/w4,w5/w6,w7/w8}{\draw[decoration={aspect=-0.05, segment length=2mm, amplitude=1mm,coil},decorate,line width=0.05cm] (\i) -- (\j);}
			
			\draw (0,5) ellipse (6 and 1);
			\draw (7,0) ellipse (1 and 3);
			\draw (-7,0) ellipse (1 and 3);
			\draw (0,-5) ellipse (5 and 1);
			\foreach \i/\j in {(5.4,5)/w1,(3.6,5)/w2,(2.4,5)/w3, (0.6,5)/w4,(-0.6,5)/w5,(-2.4,5)/w6,(-3.6,5)/w7,(-5.4,5)/w8,(7,2.5)/x1,(7,1)/x2,(-7,-1)/z1,(7,-1)/x3,(0,-5)/y1,(-7,1)/z2}{\filldraw[black]\i circle (1.5pt)coordinate(\j);}
			
			\node at (9,5) [black] {$W^\prime\backslash(B\cup U)$};
		\end{tikzpicture}
	\end{center}
	\caption{An illustration of how to cover all vertices in $W^\prime\backslash(B\cup U)$ by embedding $P_W$. The path in $G[W^\prime\backslash(B\cup U)]$ is the auxiliary path $P_W^{\ast}$ obtained from $P_W$ by replacing several short subpaths with edges.}
	\label{FIG-auxiliary}
\end{figure}

Thus we may assume that $P_W$ contains $l$ disjoint directed $13$-paths, and consequently, it has $l$ directed 4-paths and 5-paths.  Let $\alpha,\beta>0$ be two integers such that $l = 2\alpha+3\beta$ and let $P_W^{\ast}$ be the path obtained from $P_W$ by contracting $\alpha$ directed 4-paths and $\beta$ directed 5-paths. In the same way, $G[W^\prime\backslash (B\cup U)]$ has a copy of $P_W^{\ast}$  starting from $w^2_G$ and ending at $w^1_G$, and this copy can be extended to an embedding of $P_W$ in $G^{\prime}$.  More precisely, assume that $wxyzw^{\prime}$ is a directed 5-path of $P_W$ and thus $ww^{\prime}$ belongs to $P_W^{\ast}$ by the construction. By \ref{EP3} and \ref{H3},  every vertex has $O(\delta n)$ missing edges in the ``4-cycle direction''. Thus $w$ has $|X^{\prime}|-O(\delta n)\geqslant \mu n/4$ outneighbors in $X^{\prime}$ and then $x$ has $\mu n/4-|B\cup U|\geqslant \mu n/8$ possible positions. Similarly, there are $\mu n/8$ possible positions for $z$. Since every pair of vertices in $X$ and $Z$ has at least $|Y^{\prime}|-O(\delta n)$ common neighbors in $Y^{\prime}$, the vertex $y$ has many positions for embedding.  By the same argument, assume $wxzw^{\prime}$ is a 4-path of $P_W$, then $x$ has $\mu n/8$ possible positions. Moreover,  by \ref{EP5} and \ref{H3}, every vertex in $X^{\prime}\backslash B$ has more than $\mu n/8$ outneighbors in $Z^{\prime}\backslash (B\cup U)$ and thus $z$ has $\mu n/8-O(\delta n)$ possible positions for embedding. As $l\ll \mu n$,  we can embed $P_W$ properly  using the exact method described above.

\medskip
\noindent\textbf{Step 3: Embed $L_Y$ and $P_Y$ to cover all remaining vertices.}
\medskip

By the definitions of $B^+$ and $B^-$,  we have $d(v,W^\prime)\leqslant 2\sqrt{d} n$ for each $v\in B\backslash (B^+\cup B^-)$. It follows by \ref{H3} and $|W^\prime|\geqslant \mu n/2$ that $d(v,W)\leqslant (|W|/|W^\prime| )d(v,W^\prime)+O(dn)\leqslant O(\xi n)$.  On the other hand,  \ref{EP1} and the degree condition imply $d(v,W\cup Y)\geqslant n/4 -O(\xi n)$ and thus $d(v,Y)\geqslant |Y| -O(\xi n)$. By \ref{H2}-\ref{H3} again, we have that $d(v,Y^{\prime})\geqslant |Y^{\prime}| -O(\xi n)\geqslant n/25$. 

Recall that in Step 1, all vertices in $B^+\cup B^-$ are covered by the embedding of $L_W$ and thus  $B^+\cup B^- \subseteq U$. We further embed the vertex $y^2$ into $G[Y^{\prime}\backslash(B\cup U)]$ and let $y^2_G$ be its copy. Let $U^\sigma=\{v\in (B\cup  X^\prime\cup Z^{\prime})\backslash U: d^\sigma(v,Y^\prime\backslash U)\geqslant n/200\}$. Observe that $U^+,U^-$ is a partition of $(B\cup X^{\prime}\cup Z^{\prime})\backslash U$ by \ref{EP3}-\ref{EP4},  \ref{H2}-\ref{H3} and the fact $|U|=O(dn)$. Moreover, \ref{H2} and $|B|=O(\delta n)\ll \mu n$ imply that $|U^+\cup U^-|\leqslant |B| + |X^\prime \cup Z^\prime|\leqslant 7\mu n$.  Recall that $|L_Y|=\sqrt{\xi}n $ and $L_Y$ contains at least $\xi^2 n$ sinks by \ref{C1}. By the same argument as in Step 2,  there is a copy of  $y^2L_Y$ in the remaining $G^{\prime}$ with form $y^2_G(Y\cdots YU^+)^{|U^+|} (Y\cdots YU^-)^{|U^-|}Y\cdots Yy^{\ast}_G$. 
Let $Y^{\ast\ast}$ be the set of remaining vertices in $Y^\prime$. Clearly, $|Y^{\prime}\backslash Y^{\ast\ast}|\leqslant O(\sqrt{\xi}n)$ and $|Y^{\ast\ast}|\geqslant n/30$. By a similar argument, \ref{EP4} and \ref{H3} yield that $\delta^0(G[Y^{\ast\ast}])\geqslant |Y^{\ast\ast}|/2-O(\sqrt{\xi}n)$ and thus  $G[Y^{\ast\ast}]$ contains a copy of $P_Y$ which starts from $y^1_G$ and ends at $y^2_G$ by Lemmas \ref{LEM-semitoexpander} and \ref{LEM-Connect}.

Therefore, we get an embedding of $C$, i.e., $L_YQ_0P_1Q_1\cdots P_tQ_tL_WP_WLP_Y$, which completes the proof.
\end{proof}

\section{Pancyclicity}\label{SEC-pancyclicity}

In this section, we are going to prove Theorem \ref{THM-anyorianylength}.  We first mention some results on short cycles in digraphs. Recall that a central problem in digraph theory, Caccetta-H\"aggkvist Conjecture,  states that every oriented graph on $n$ vertices with minimum outdegree $d$ contains a directed cycle of length at most $n/d$. For the existence of a directed 3-cycle, Shen \cite{shenJCTB74} proved minimum outdegree $0.355 n$ suffices and, Hamburger,  Haxell and  Kostochka \cite{hamburgeEJC14} showed that if one consider minimum semidegree instead of  minimum outdegree, then the constant can be improved slightly, they showed that $\delta^0(G)\geqslant 0.346 n$ ensures a directed 3-cycle. For transitive 3-cycles, an $n/3$-blowup of the directed 3-cycle shows that minimum semidegree $\delta^0(G)\geqslant n/3$ is not sufficient  and it is not difficult to check that $\delta^0(G)\geqslant n/3+1$ suffices.

Theorem \ref{THM-kellyanyorianyl} shows that minimum semidegree $(3/8+\varepsilon)n$ ensures a cycle of every possible orientation and of every possible length in an oriented graph. Kelly, K\"uhn and Osthus  proved that for cycles of constant lengths the bound can be improved, but the improvement depends on the \emph{cycle-type}, where the cycle-type is the number of forward edges minus the number of backward edges. 

\begin{theorem}[\cite{kellyJCTB100}]\label{THM-shortcycle}
(i) Let $l\geqslant 4$ and  $\varepsilon >0$. There exists $n_1=n_1(l,\varepsilon)$ such that if $G$ is an oriented graph on $n\geqslant n_1$ vertices with  $\delta^0(G)\geqslant (1/3+\varepsilon)n$ then $G$ contains every possible orientation of an $l$-cycle.

(ii) Let $\gamma>0$ and let $l$ be some positive constant. There exists $n_2=n_2(\gamma,l)$ such that every oriented graph $G$ on $n\geqslant n_2$  vertices with $\delta^0(G)\geqslant \gamma n$ contains every oriented cycle of length at most $l$ and cycle-type 0.
\end{theorem}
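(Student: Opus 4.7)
The plan is to handle both parts via the Diregularity Lemma (Lemma~\ref{LEM-reguler}) followed by a random-winding embedding into a small substructure of the reduced oriented graph, exploiting the fact that short cycles require far less global expansion than Hamilton cycles do, so the extremal $3n/8$ threshold can be lowered.

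For part (i), first apply Lemma~\ref{LEM-reguler} with parameters $\varepsilon'\ll d\ll\varepsilon$ to obtain a reduced oriented digraph $R$ on $k$ vertices with $\delta^0(R)\geqslant (1/3+\varepsilon/2)k$. A short double-counting based on $d^+(V)+d^-(V)>2k/3$ produces a directed triangle $V_1V_2V_3$ in $R$, and after trimming a few vertices per cluster via Lemma~\ref{LEM-regularsuper} the three bipartite pairs $(V_i,V_{i+1})$ become superregular. A given orientation $C$ of an $l$-cycle has cycle-type $\tau$ whose residue modulo $3$ is $0$, $+1$ or $-1$. If $\tau\equiv 0\pmod 3$ I would wind $C$ randomly around $V_1V_2V_3$ via Lemma~\ref{LEM-randomwind}: a forward edge advances the cluster index by $+1\pmod 3$ and a backward edge by $-1\pmod 3$, so the walk closes up, and size-balancing is automatic since $l$ is a constant. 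For residues $\pm 1\pmod 3$ a single triangle is not enough; I would locate a second short configuration in $R$ whose winding displacement is not $0\pmod 3$ --- for instance a directed $4$-cycle sharing an edge with the triangle, or (via common in- and out-neighbourhoods supplied by $\delta^0(R)>k/3$) an antidirected $4$-cycle adjacent to it --- and cut $C$ into a bounded number of pieces, each wound around the appropriate substructure, then glued across shared clusters using superregularity.

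For part (ii) the semidegree is only $\gamma n$ and regularity is overkill: I would argue directly and greedily. Any constant-sized set of vertices has a linear common out-neighbourhood (and likewise common in-neighbourhood) by iterated inclusion-exclusion using $\delta^0(G)\geqslant \gamma n$, and any cycle-type-$0$ orientation of a $2k$-cycle with $2k\leqslant l$ is a pattern of sinks, sources and normal vertices with equal numbers of forward and backward edges. I would construct the cycle vertex by vertex, at each step choosing the next vertex from a common neighbourhood of the appropriate in/out type, and close up at the final step by reserving a vertex lying in the correct intersection of in- and out-neighbourhoods of the first and penultimate vertices; this succeeds once $n$ is large enough compared to $l/\gamma$.

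The main obstacle is the $\tau\equiv\pm 1\pmod 3$ case in part (i): one must show that at density only $1/3+\varepsilon$ the reduced graph contains, in addition to a directed triangle, an auxiliary short structure whose winding displacement fills in the missing residue classes, and verify that the random winding can simultaneously balance across both substructures without violating Lemma~\ref{LEM-randomwind}. This is where the tightness of the $(1/3+\varepsilon)n$ bound really shows. Once this flexibility is available, endpoint-matching and cluster-size balancing become routine because $l$ is a fixed constant and only constantly many clusters of $R$ are involved.
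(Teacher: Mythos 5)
Theorem \ref{THM-shortcycle} is an imported result: the paper cites it from \cite{kellyJCTB100} and gives no proof, so there is nothing to match your argument against; judged on its own terms, your proposal has genuine gaps in both parts.

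In part (i) the very first structural step fails. You claim that $\delta^0(R)\geqslant (1/3+\varepsilon/2)k$ yields a directed triangle in the reduced oriented graph ``by a short double-counting''. That statement is the minimum-semidegree form of the Caccetta--H\"aggkvist problem for triangles, which is open: the best known constant is about $0.346k$ (Hamburger--Haxell--Kostochka \cite{hamburgeEJC14}), quoted in Section \ref{SEC-pancyclicity} of this paper, and it lies strictly above $1/3$. So for small $\varepsilon$ no elementary counting produces the triangle, and your winding scheme has no base structure to wind around. Even granting a triangle, the residues $t\equiv\pm 1\pmod 3$ are not handled: an antidirected $4$-cycle has cycle-type $0$, so routing a segment around it changes the winding displacement by $0$ and cannot repair the residue; and the existence of a directed $4$-cycle (sharing an edge with the triangle, say) at semidegree $(1/3+\varepsilon)k$ is itself a statement of comparable difficulty to the theorem you are proving --- note that the plain directed $4$-cycle has cycle-type $4\equiv 1\pmod 3$, so this case is not peripheral but is exactly where the work of \cite{kellyJCTB100} lies.

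In part (ii) the key claim, that any constant-sized set has a linear common out-neighbourhood ``by iterated inclusion--exclusion using $\delta^0(G)\geqslant\gamma n$'', is false once $\gamma<1/2$: in the balanced blow-up of a directed triangle with parts $A,B,C$ we have $\delta^0=n/3$, yet a vertex of $A$ and a vertex of $B$ have disjoint out-neighbourhoods ($B$ and $C$ respectively). Your greedy construction needs precisely such common in-/out-neighbourhoods at the closing step, so it breaks down; worse, the sketch never uses the hypothesis that the cycle-type is $0$, and if the greedy argument worked as written it would embed cycles of every cycle-type, contradicting the same example (the triangle blow-up has no cycle whose cycle-type is not divisible by $3$). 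A workable route is to use cycle-type $0$ structurally: labelling the vertices of $C$ by partial sums of $\pm1$ along the cycle gives a homomorphism into a short directed path, hence into any directed cycle; then apply the Diregularity Lemma (Lemma \ref{LEM-reguler}), find a directed cycle in the reduced digraph (it exists because the reduced digraph inherits positive minimum outdegree), make the relevant pairs superregular via Lemma \ref{LEM-regularsuper}, and embed the constant-length cycle by winding it around that cycle inside the regular pairs. This is where the cycle-type-$0$ hypothesis enters, exactly at the point where your argument silently assumes more than the degree condition provides.
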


Note that antidirected cycles are special cases of cycles with cycle-type 0. In \cite{kuhnEJC34} K\"uhn,  Osthus and  Piguet  considered the case that the cycle-type is sufficient large with respect to $k$. 
\begin{theorem}[\cite{kuhnEJC34}]
Let $k \geqslant 3$ and $t \geqslant 10^8k^6$. Suppose that $k$ is the minimal integer greater than 2 that does not divide $t$. Then for all $\varepsilon>0$ and all $l$ there exists an integer $n_0 = n_0(\varepsilon, l)$ such that every oriented graph $G$ on $n \geqslant n_0$ vertices with  $\delta^0(G)\geqslant (1 + \varepsilon)n/k$ contains any oriented $l$-cycle of cycle-type $t$.
\end{theorem}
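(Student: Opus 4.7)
The plan is to reduce the problem to finding a suitable closed walk in an auxiliary reduced digraph and then embedding the desired cycle via regularity. First I would apply the Diregularity Lemma (Lemma \ref{LEM-reguler}) to $G$ with parameters chosen small relative to $\varepsilon$ and $l$, together with the trick in \eqref{equ-orientreduced} to extract an oriented reduced graph $R$ inheriting the degree condition, so that $\delta^0(R) \geqslant (1+\varepsilon/2)|R|/k$. The goal then becomes to find a closed walk $W$ in $R$ of length $l$ whose signed length (forward minus backward steps) equals $t$, and which visits each cluster of $R$ with bounded multiplicity, since such a walk can be converted into an embedded $l$-cycle in $G$ by assigning vertices to clusters along the walk and using $(\varepsilon,d)$-regularity to pick the actual images greedily.

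The structural heart of the argument is the construction of this walk in $R$. The degree threshold $(1+\varepsilon/2)|R|/k$ is precisely the regime in which, by Caccetta--H\"aggkvist-type results inside the reduced graph (where density permits standard arguments that are not available in $G$ itself at $(1+\varepsilon)n/k$), one expects to locate a directed cycle $F$ of length exactly $k$ in $R$. Moreover, for every $2 \leqslant i \leqslant k-1$, one expects directed cycles of length $i$ to exist as well. Winding $t/k$ times around $F$ would give signed length $t$ only if $k \mid t$, which is forbidden, so the plan is to combine one winding around $F$ with a small number of windings around shorter directed cycles to realize exactly signed length $t$. Because each $i \in \{2, \ldots, k-1\}$ divides $t$, the residue classes modulo each such $i$ cause no obstruction, and since $k \nmid t$ the $k$-cycle $F$ must be used at least once; the very large lower bound $t \geqslant 10^8 k^6$ guarantees the existence of nonnegative integer coefficients realizing both the length $l$ and the signed length $t$ simultaneously, via a Chicken McNugget / Frobenius-type argument over the set of achievable cycle-length pairs.

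With the walk $W$ in hand, the embedding step is comparatively standard: after removing a small fraction of each cluster to turn the relevant pairs along $F$ and the shorter cycles into $(\varepsilon,d)$-superregular pairs (Lemma \ref{LEM-regularsuper}), one embeds the $l$-cycle vertex by vertex along $W$, using Lemma \ref{LEM-randomwind}-style randomised distribution to ensure no cluster is overloaded, and possibly applying the Blow-up Lemma (Lemma \ref{LEM-blowup}) to finish off the final matching on a single superregular pair. Since $l$ is a constant, even modest regularity and superregularity parameters suffice, and no special-vertex machinery is required.

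The main obstacle I anticipate is the combinatorial-arithmetic step: showing that starting from one directed $k$-cycle and auxiliary short directed cycles of lengths $2, 3, \ldots, k-1$ in $R$, one can realize \emph{every} pair $(l,t)$ with $|t| \leqslant l$ and $t \geqslant 10^8 k^6$ as the (length, signed length) of a closed walk. The bound $t \geqslant 10^8 k^6$ strongly suggests a representation-theoretic or Frobenius-number argument on the achievable set, and one must carefully track parity and divisibility at each step, which is why the hypothesis that $k$ is the \emph{smallest} integer $> 2$ not dividing $t$ is used so sharply. Once this arithmetic is set up, the regularity-based embedding is routine.
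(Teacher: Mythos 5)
This statement is quoted in the paper from \cite{kuhnEJC34} without proof, so there is no in-paper argument to compare against; judging your proposal on its own merits (and against the strategy of the cited source), the reduction to the reduced oriented graph $R$ via Lemma \ref{LEM-reguler} and (\ref{equ-orientreduced}), and the final greedy/regularity embedding of a constant-length cycle, are fine. The problem is the structural heart of your argument.

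The claim that $\delta^0(R)\geqslant (1+\varepsilon/2)|R|/k$ yields a directed cycle of length exactly $k$ in $R$, together with directed cycles of every length $2\leqslant i\leqslant k-1$, is not justified and is false in general. First, cycles of length $2$ cannot occur at all, since $R$ is oriented. More importantly, $R$ could be (close to) a blow-up of the directed cycle $\vec{C}_{k-1}$: this satisfies the degree hypothesis, yet every directed cycle in it has length divisible by $k-1$, so it contains no $k$-cycle and no $i$-cycle for most $i\leqslant k-1$. The Caccetta--H\"aggkvist problem you appeal to is open, and even its full strength would only give \emph{some} cycle of length at most $\lceil k/(1+\varepsilon/2)\rceil$, not cycles of each prescribed length. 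Your arithmetic framing also signals the confusion: if $R$ really contained a directed cycle of each length $3,\ldots,k-1$, then (since each such length divides $t$) winding the whole oriented $l$-cycle around a single such cycle would already close up, the $k$-cycle would never be needed (your ``since $k\nmid t$ the $k$-cycle must be used'' is backwards --- winding around a $k$-cycle changes the displacement by multiples of $k$ and so cannot help fix a residue $t\not\equiv 0 \pmod k$), and the hypothesis $t\geqslant 10^8k^6$ would be superfluous. The genuine difficulty, which your sketch bypasses, is precisely that one cannot prescribe the individual cycle lengths available in $R$; one can only show, roughly, that short directed cycles exist (Chv\'atal--Szemer\'edi-type expansion bounds, giving lengths $O(k)$ rather than $\leqslant k$) and that the gcd $d$ of the available cycle lengths must divide $t$ --- since otherwise $R$ would admit a cyclic $d$-partition forcing $\delta^0(R)\leqslant |R|/d$ (or $\leqslant |R|/4$ when $d=2$), contradicting the degree bound except when $3\leqslant d\leqslant k-1$ or $d\in\{1,2\}$ with $d\mid t$. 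Only then does a Frobenius-type representation argument, using $t\geqslant 10^8k^6$ and the fixed forward/backward pattern of the given orientation, produce the required homomorphic image of the $l$-cycle in $R$. Without this replacement for your ``cycles of every length up to $k$'' step, the proof does not go through.
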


It is worth noticing that for some cycle-types and lengths, a semidegree exceeding $n/3$ is necessary, for example, an $n/3$-blowup of the directed 3-cycle has no directed $k$-cycle if 3 does not divide $k$. Thus the degree condition in Lemma \ref{THM-shortcycle} (i) is the best possible. For cycles of cycle-type 0, Lemma \ref{THM-shortcycle} (ii) implies that we can find short antidirected cycles with a minimum semidegree $\gamma n$. Further, for antidirected cycles of every possible length, a minimum semidegree $(3/8+\varepsilon)n$ suffices due to Kelly, K\"uhn, and Osthus. Recently, Stein and Z\'arate-Guer\'en \cite{steinCPC33} gave an intermediate bound on minimum semidegree for antidirected cycles of medium length. 

\begin{theorem}[\cite{steinCPC33}]
For all $\varepsilon\in(0,1)$ there is $n_0$ such that for all $n\geqslant n_0$ and $k\geqslant \eta n$, every oriented graph $G$ on $n$ vertices with $\delta^0(G)>(1+\varepsilon)k/2$ contains any antidirected cycle of length at most $k$.
\end{theorem}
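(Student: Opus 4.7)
The plan is to reduce the problem of finding an antidirected cycle of a prescribed even length $\ell\leqslant k$ in $G$ to finding an ordinary cycle of length $\ell$ in an auxiliary bipartite graph. Form a bipartite graph $H$ with vertex classes $A$ and $B$, each a copy of $V(G)$, where $a_ub_v$ is an edge of $H$ exactly when $uv$ is an arc of $G$; an antidirected $\ell$-cycle $v_1v_2\cdots v_\ell$ in $G$ (with $v_1$ a source) then corresponds bijectively to an $\ell$-cycle $a_{v_1}b_{v_2}a_{v_3}\cdots b_{v_\ell}$ in $H$, subject to the constraint that $v_1,\ldots,v_\ell$ are pairwise distinct in $V(G)$.

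To enforce distinctness and obtain a clean degree condition on a single bipartite graph, I would randomly partition $V(G)$ into ``source candidates'' $V_+$ and ``sink candidates'' $V_-$, each of size roughly $n/2$. By Theorem \ref{THM-chernoff1}, with high probability every $u\in V_+$ satisfies $d_G^+(u,V_-)>(1+\varepsilon/2)k/4$ and every $v\in V_-$ satisfies $d_G^-(v,V_+)>(1+\varepsilon/2)k/4$. Let $H'$ be the bipartite graph on $V_+\cup V_-$ whose edges are the pairs $uv$ with $u\in V_+$, $v\in V_-$, and $uv\in E(G)$. Since $V_+\cap V_-=\emptyset$, any cycle in $H'$ automatically uses each vertex of $V(G)$ at most once, and $H'$ has minimum degree exceeding $(1+\varepsilon/2)k/4$. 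It now suffices to exhibit in $H'$ a cycle of every even length $4\leqslant\ell\leqslant k$.

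This bipartite pancyclicity step is where the main difficulty lies. A single long cycle in $H'$ follows from Moon-Moser-type arguments, but producing cycles of all prescribed even lengths up to $k$ requires more work. The plan is a stability dichotomy analogous to the one used elsewhere in this paper: in the non-extremal case, $H'$ is a sufficiently good bipartite expander so that a P\'osa-style rotation-extension argument combined with a pre-constructed absorbing gadget yields a cycle of any prescribed even length in the feasible range; in the extremal case, $H'$ is close to a blow-up of the $4$-cycle, and one exploits the hypotheses $k\geqslant\eta n$ and the strict inequality $\delta^0(G)>(1+\varepsilon)k/2$, transported back through the random partition, to rule out or directly handle such near-extremal configurations. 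Once cycles of every even length up to $k$ are located in $H'$, the initial correspondence delivers the required antidirected cycles in $G$.
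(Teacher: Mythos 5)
This statement is only quoted in the paper (it is a result of Stein and Z\'arate-Guer\'en cited as \cite{steinCPC33}), so there is no in-paper proof to compare against; judging your argument on its own terms, it has a fatal gap at the reduction step. By fixing a random bipartition $V_+\cup V_-$ and keeping only the arcs from $V_+$ to $V_-$, you commit in advance to which vertices may serve as sources and which as sinks, and this loses a factor of $2$ exactly in the extremal configurations. Concretely, let $G$ be the blow-up of the directed triangle with parts $T_1,T_2,T_3$ of size $n/3$ and all arcs $T_1\to T_2\to T_3\to T_1$. Then $\delta^0(G)=n/3$, so the hypothesis $\delta^0(G)>(1+\varepsilon)k/2$ permits $k$ as large as roughly $2n/3$ (e.g.\ $k=0.6n$ for small $\varepsilon$), and $G$ does contain antidirected cycles of every even length up to $2n/3$: they alternate between $T_1$ (all sources) and $T_2$ (all sinks). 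But for \emph{any} choice of $V_+,V_-$ your graph $H'$ is the disjoint union of the three complete bipartite graphs between $V_+\cap T_i$ and $V_-\cap T_{i+1}$, whose parts have size about $n/6$; hence $H'$ has minimum degree about $n/6>(1+\varepsilon/2)k/4$, satisfying all the conditions you establish, yet contains no cycle of length exceeding roughly $n/3<k$. So the sentence ``it now suffices to exhibit in $H'$ a cycle of every even length $4\leqslant\ell\leqslant k$'' asks for something that is simply not there, and no stability or absorption argument carried out inside $H'$ can recover it.

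The same example shows that your final step is not merely unproven but unfixable at the level of $H'$: a balanced bipartite graph with minimum degree $d$ need not contain cycles longer than $2d$ (disjoint copies of $K_{d,d}$ already witness this), so minimum degree $(1+\varepsilon/2)k/4$ can never force even cycles of length close to $k$, and the ``extremal case'' you hope to rule out genuinely occurs. Any correct proof must retain the freedom to decide along the cycle which vertices are sources and which are sinks (equivalently, work with the directed structure of $G$, or with the bipartite double cover together with a mechanism preventing a vertex and its clone from both being used), rather than freezing this choice by a global random split; this is where the actual argument of \cite{steinCPC33} departs from your plan.
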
    

\begin{theorem}\label{THM-expanderanylength}
Let  $0<1/n \ll \nu \ll \tau  < 1$. 
Let $G$ be an oriented graph on $n$ vertices which is a robust $(\nu,\tau)$-outexpander. If $\delta^0(G)\geqslant 0.346 n$, then $G$ contains a cycle of every possible orientation and of every possible length.
\end{theorem}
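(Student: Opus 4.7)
The plan is to split the range of possible cycle lengths $l \in [3,n]$ into four (overlapping) regimes, each handled by a different tool. Throughout, by Lemma \ref{LEM-inexpander} applied to $G$, our oriented graph is not only a robust $(\nu,\tau)$-outexpander but also a robust $(\nu^{3},2\tau)$-diexpander; this is the upgrade that makes path-construction arguments between prescribed endpoints available. Fix an arbitrary oriented $l$-cycle $C$.

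\textbf{Regime 1 ($l=3$).} Since $\delta^{0}(G)\ge 0.346n>n/3+1$ for large $n$, the result of Hamburger, Haxell and Kostochka cited in the opening of this section produces a directed triangle in $G$, while the elementary bound $\delta^{0}(G)\ge n/3+1$ (noted in the same discussion) produces a transitive triangle. So both orientations of a $3$-cycle occur in $G$. \textbf{Regime 2 (constant-length cycles, $4\le l\le L_{0}$).} Fix a constant $L_{0}$ to be chosen and pick $\varepsilon^{*}>0$ with $1/3+\varepsilon^{*}<0.346$. Then Theorem \ref{THM-shortcycle}(i) applied with this $\varepsilon^{*}$ to each $l\in\{4,\dots,L_{0}\}$ produces every possible orientation of an $l$-cycle, provided $n$ is large relative to $L_{0}$.

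\textbf{Regime 3 (medium cycles, $L_{0}\le l\le \nu^{3}n/4$).} Choose any directed edge $xy\in E(G)$ whose orientation matches that of the first edge of $C$; such edges exist in abundance since $\delta^{0}(G)\ge 0.346n$. Identifying the vertices of $C$ so that the first edge is $xy$, the remainder of $C$ is an orientation $P$ of an $(l-1)$-path from $y$ back to $x$. Apply Lemma \ref{LEM-shortcexpander} to the robust $(\nu^{3},2\tau)$-diexpander $G$ with $k=l-1$ to realise $P$ in $G$; appending the edge $xy$ gives the desired copy of $C$. The lower bound $L_{0}>2/\nu^{3}$ ensures $k$ meets the hypothesis of the lemma, and $l\le \nu^{3}n/4$ meets the upper bound. \textbf{Regime 4 (long cycles, $\nu^{3}n/4\le l\le n$).} For $l=n$ invoke Theorem \ref{THM-expanderanyori} on $G$ directly. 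For $l<n$, note that $G$ is trivially also a robust $(\nu^{3},\tau)$-outexpander, and apply Lemma \ref{LEM-randomexpander} with this weaker parameter and $\lambda=l/n\ge \nu^{3}/4$ to extract $U\subseteq V(G)$ of size $l$ such that $G[U]$ is a robust outexpander and $\delta^{0}(G[U])\ge (0.346-\varepsilon')|U|$. Applying Theorem \ref{THM-expanderanyori} to $G[U]$ then realises $C$ as a Hamilton cycle of $G[U]$, which is the required $l$-cycle of $G$.

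The \textbf{main obstacle} is purely bookkeeping of the parameter hierarchy so that the four regimes actually tile $[3,n]$ without any gap and so that the conclusions of each tool remain applicable. In particular, $L_{0}$ must be a constant strictly larger than $2/\nu^{3}$ yet not larger than $\nu^{3}n/4$, which is possible once $n$ is sufficiently large in terms of $\nu$; the chained conditions $1/|U|\ll \nu^{(U)}\ll \tau^{(U)}\ll 0.346$ needed to apply Theorem \ref{THM-expanderanyori} inside the smaller graph $G[U]$ in Regime 4 must be verified from the parameters produced by Lemma \ref{LEM-randomexpander}; and in Regime 3 one must track that converting $(\nu,\tau)$-outexpansion to $(\nu^{3},2\tau)$-diexpansion via Lemma \ref{LEM-inexpander} still leaves the hierarchy $1/n\ll\nu^{3}\le 2\tau\ll 0.346$ intact. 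All of these are routine inside the hierarchy $1/n_{0}\ll\nu\ll\tau<1$ stated in the theorem; the real conceptual content is the case split itself, with Regime 3 being the step that genuinely uses the full diexpansion strength of $G$.
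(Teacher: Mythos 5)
Your proposal is correct and follows essentially the same route as the paper: the identical four-way split into triangles (Hamburger--Haxell--Kostochka plus the $n/3+1$ bound for the transitive triangle), constant lengths via Theorem \ref{THM-shortcycle}(i), medium lengths via Lemma \ref{LEM-inexpander} and Lemma \ref{LEM-shortcexpander} applied to an edge $xy$, and long lengths via Lemma \ref{LEM-randomexpander} followed by Theorem \ref{THM-expanderanyori}. The only differences are cosmetic (naming the constant $L_{0}$ instead of using $2/\nu^{3}$ directly, and quoting the semidegree of $G[U]$ as $(0.346-\varepsilon')|U|$ rather than $l/3$), so no further changes are needed.
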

\begin{proof}

Choose $\varepsilon$ with $0<1/n\ll \varepsilon \ll \nu$.  By the arguments in the beginning of this section, Hamburger, Haxell and  Kostochka showed that $\delta^0(G)\geqslant 0.346 n$ ensures a directed 3-cycle and for a transitive 3-cycle, $\delta^0(G)\geqslant n/3+1$ suffices.  For the case $4\leqslant l \leqslant 1/\varepsilon+1$, by Lemma \ref{THM-shortcycle} (i) and the fact $\delta^0(G)\geqslant 0.346 n$, one may find $C$ in the oriented graph $G$. When $1/\varepsilon +1\leqslant l \leqslant n$,  Lemma \ref{LEM-Connect} shows that for any edge $xy$, there exists a path of every possible orientation with length $l-1$ from $x$ to $y$ and there is an embedding of $C$ in $G$.
\end{proof}

Now we are ready to prove
our pancyclicity result, i.e., Theorem \ref{THM-anyorianylength}. 

\begin{proof}[\textbf{Proof of Theorem \ref{THM-anyorianylength}.}]
Let $G$ be an oriented graph on $n $ vertices with $\delta^0(G)\geqslant  (3n-1)/8$  and let $C$ be a cycle of every possible orientation and every possible length. 

By Theorem \ref{THM-expanderanylength}, it suffices to consider the case that $G$ is not a robust outexpander. Then $V(G)$ has a partition $(W,X,Y,Z)$ satisfying \ref{EP1}-\ref{EP7} due to Theorem \ref{THM-extremal}. In particular, by \ref{EP3}-\ref{EP5}, there are $O(\delta n)$ bad vertices in $G$. Denote the set of these bad vertices by $B$. It follows by \ref{EP1} and \ref{EP4}  that $|W\backslash B|\geqslant n/5$ and $\delta^0(G[W\backslash B])\geqslant 4|W\backslash B|/9$. Then Lemma \ref{LEM-semitoexpander} and Theorem \ref{THM-expanderanylength} imply that there is a copy of $C$ in $G[W\backslash B]$ if $|C|\leqslant n/5$. 

Thus it suffices to consider the case that  $|C|\geqslant n/5$. We first assume that $C$ contains only a few sinks and, by symmetry, we further assume $|X|\geqslant |Z|$. By Lemma \ref{LEM-assignmatch}, $V(G)$ has a $\delta$-extremal partition $(W^\prime,X^\prime,Y^\prime,Z^\prime)$ with $|X^\prime|\geqslant |Z^\prime|$ such that $E(X^\prime\cup Y^\prime,W^\prime\cup X^\prime)$ contains a matching $M$ of size $|X^\prime|-|Z^\prime|$. By the definition of $\delta$-extremal partition, we have $|X^\prime|-|Z^\prime|=O(\delta n)$. Fix the vertices in $V(M)$ and apply Lemma \ref{LEM-randompartition} to choose a subset in each of $W^\prime,X^\prime,Y^\prime,Z^\prime$ such that the resulting  $W^\ast,X^\ast,Y^\ast,Z^\ast$ and $G^{\ast}$ satisfying the following.

(i)   $|G^{\ast}|=|C|$ and $(W^\ast,X^\ast,Y^\ast,Z^\ast)$ is a $\delta$-extremal partition of $V(G^{\ast})$;

(ii) $J^{\ast}\subseteq J^\prime$ for each $J\in\{W,X,Y,Z\}$;

(iii) $0\leqslant |X^\ast|-|Z^\ast|\leqslant |X^\prime|-|Z^\prime|$;

It should be noted that $E(X^\ast\cup Y^\ast,W^\ast\cup X^\ast)$ still contains the matching $M$  as we fixed them first. Moreover, Lemma \ref{LEM-randompartition} shows that $G^{\ast}$ inherits the degree condition of $G$, that is, $\delta^0(G^{\ast}) \geqslant 3 |G^\ast|/8-O(\varepsilon |G^\ast|)$. Then there is a copy of $C$ in $G^\ast$ by Lemmas  \ref{LEM-matchlarge} and \ref{LEM-balbancecycle}.

A similar proof can be applied to the case that $C$ has many sinks. More precisely, we first apply Lemma \ref{LEM-2matching} to the partition $(W,X,Y,Z)$ of $V(G)$. If $G-v$ is isomorphic to the graph in Figure \ref{FIG-degreesharp}, then Theorem \ref{THM-anyorianylength} (ii) holds clearly. Thus we may assume that there are two disjoint special edges for $(W,X,Y,Z)$. Fixing the endvertices of these two edges and applying Lemma \ref{LEM-randompartition} again, the resulting oriented graph $G^{\ast}$ inherits the degree condition of $G$ and, the resulting partition $(W^{\ast},X^{\ast},Y^{\ast},Z^{\ast})$ is  $\delta$-extremal for $V(G^{\ast})$. Then the problem of embedding $C$ into $G$ is equivalent to the problem of finding a Hamilton cycle $C$ in $G^{\ast}$ and  we are done by applying Theorem \ref{THM-specialmanysink}.
\end{proof}

\section{Concluding remarks}\label{SEC-remark}

In this paper,  we determine the exact minimum semidegree threshold for a cycle of every possible orientation and of every possible length in oriented graphs,  which improves an approximate result of Kelly in \cite{kellyEJC18} and  solves a problem of H\"aggkvist and Thomason in \cite{haggkvist1997}  (also see page 2 in \cite{haggkvistJGT19}). Next we mention a few open questions and future directions for research.

Given a digraph $G$, let $\delta(G)$ denote the minimum degree of $G$ (i.e. the minimum number of edges
incident to a vertex) and set $\delta^{\ast}(G) := \delta(G) + \delta^+(G) + \delta^-(G)$. H\"aggkvist \cite{haggkvistCPC2} conjectured that every $n$-vertex oriented graph $G$ with $\delta^{\ast}(G)> (3n-3)/2$ contains a directed Hamilton cycle.  In \cite{kellyCPC17}, the  conjecture  was verified approximately by Kelly, K\"uhn and Osthus. Moreover, Woodall \cite{woodallPLMS24} proved the following digraph version of Ore's theorem: if $d^+(x)+d^-(y) \geqslant |G|$  for every two distinct vertices $x,y$  for which there is no edge from $x$ to $y$, then the given digraph $G$ has a directed Hamilton cycle. It seems to be an interesting problem to determine the exact $\delta^{\ast}(G)$ or Ore-type condition threshold for every possible orientation  of a Hamilton cycle in  oriented graphs and in general digraphs.

One may further consider the transversal generalization of Theorem \ref{THM-arbitraryori}.  For a collection
$\mathcal{G} = \{G_1,G_2,\ldots,G_m\}$  of not-necessarily distinct digraphs on the common vertex set $V$, an $m$-edge digraph $G$ on vertex set $V$ is called a \emph{transversal} of  $\mathcal{G}$ if the edges of $G$ intersect each $G_i$ exactly once. For results on transversal, we refer the reader to the survey paper \cite{sunarXiv2024}. It is well-known that every tournament contains a directed Hamilton path, and every strongly connected tournament contains a directed Hamilton cycle. Chakraborti, Kim, Lee and Seo \cite{chakrabortiC44}  
established transversal generalizations of these results. Further, they \cite{chakrabortiarXiv2024} established the transversal generalization of a result of Thomason in \cite{thomasonTAMS296}, providing the existence of every possible
transversal cycles in a collection of tournaments.  They showed that every system of $n$ tournaments with order $n$ contains a transversal of every orientation of a  Hamilton cycle except possibly the directed one.  In  \cite{chengarXiv2025}, Cheng, Li, Sun and Wang generalized Ghouila-Houri's theorem to transversal version and they posed a corresponding problem in oriented graphs.  Determining the exact degree threshold that ensures a transversal of every orientation of a  Hamilton cycle in a digraph or an oriented graph system is a natural  question. Indeed, we have the following  problems.

\begin{problem}
Let $\mathcal{G} = \{G_1,G_2,\ldots,G_n\}$ be a collection of oriented graphs with common vertex set
$V$ of size $n$. If $\delta^0(G_i) \geqslant(3n-1)/8$  for all $i\in[n]$, does $\mathcal{G}$ contain a transversal of every orientation of a  Hamilton cycle?
\end{problem}

\begin{problem}
Let $\mathcal{G} = \{G_1,G_2,\ldots,G_n\}$ be a collection of digraphs with common vertex set
$V$ of size $n$. If $\delta^0(G_i) \geqslant n/2+1$ for all $i\in[n]$, does $\mathcal{G}$ contain a transversal of every orientation of a Hamilton cycle?
\end{problem}

\bibliographystyle{plain}

\newpage    
\appendix

\section{A construction of $G$ in Figure \ref{FIG-degreesharp}}
\label{APPSEC-table1}

Now we give a construction of $G$ as follows:  
Let $W,X,Y,Z$ be a partition of $V(G)$. We first add all edges from $W$ to $X$, from $X$ to $Y$, from $Y$ to $Z$ and from $Z$ to $W$. Then we choose the tournament inside $W$ and $Y$ to be as regular as possible. 

Next we construct the bipartite tournament between $X$ and $Z$.  For the case $n=8s+1$, we have $|X|=|Z|=2s+1$. Let $X=\{x_1,x_2,\ldots,x_{2s+1}\}$ and $Z=\{z_1,z_2,\ldots,z_{2s+1}\}$. In this case we choose the bipartite tournament between $X$ and $Z$ such that $d^+(x_i,Z)= d^-(z_j,X)=s+1$ and $d^-(x_i,Z)= d^+(z_j,X)=s$ for $i,j\in[2s+1]$. This can be achieved by letting $N^+(x_i,Z)=\{z_i,z_{i+1},\ldots,z_{i+s}\}$ for each $x_i\in X$, where the subscript is taken mod $2s+1$. For the case that $n\neq 8s+1$, let $X=X_1\cup X_2$, $Z=Z_1\cup Z_2$ with $|X_1|=|Z_1|=s+1$ and  $|X_2|=|Z_2|\in\{s,s+1\}$ and let $X_1Z_1X_2Z_2$ be a blowup of the directed cycle of length 4.

Table \ref{TAB-degreesharp} will help readers to better check every vertex of $G$  has the correct indegree and outdegree by the construction.

\begin{table}[H]

\centering
\begin{tabular}{m{1.5cm}<{\centering}m{1.5cm}<{\centering}m{1.5cm}<{\centering}m{2cm}<{\centering}m{1.5cm}<{\centering}}
	\hline
	$n$  &  $\delta^0(G)$ & $|W|$ & $|X|=|Z|$ & $|Y|$ \\
	\hline
	$8s+1$ & $3s$ & $2s$ & $2s+1$ & $2s-1$ \\
	$8s+2$ & $3s$ & $2s$ & $2s+1$ & $2s$ \\
	$8s+3$ & $3s$ & $2s$ & $2s+1$ & $2s+1$ \\
	$8s+4$ & $3s+1$ & $2s+1$ & $2s+1$ & $2s+1$ \\
	$8s+5$ & $3s+1$ & $2s+1$ & $2s+2$ & $2s$ \\
	$8s+6$ & $3s+2$ & $2s+1$ & $2s+2$ & $2s+1$ \\
	$8s+7$ & $3s+2$ & $2s+1$ & $2s+2$ & $2s+2$ \\
	$8s+8$ & $3s+2$ & $2s+2$ & $2s+2$ & $2s+2$ \\
	\hline
\end{tabular}
\caption{ \small The sizes of $W,X,Y,Z$ and the minimum semidegree of $G$ according to the value of $n$ mod 8.}
\label{TAB-degreesharp}
\end{table}

\section{Proof of Lemma \ref{LEM-orientedreduced}}\label{APPSEC-proforien}

Let $R$ be a spanning oriented subgraph obtained from $R^\prime$ by deleting edges as follows. For every unordered pair $V_i,V_j$ of clusters, we delete edge $V_iV_j$ (if it lies in $R^\prime$) with probability $e_{G^\prime}(V_j,V_i)/(e_{G^\prime}(V_i,V_j)+e_{G^\prime}(V_j,V_i)).$ Otherwise, we delete $V_jV_i$ (if it lies in $R^\prime$). We interpret the probability $0$ if $V_iV_j,V_jV_i \notin E(R^\prime)$. Therefore, if $R^\prime$ contains at most one of the edges $V_iV_j,V_jV_i$ then we do nothing. We do this for all unordered pairs of clusters independently. For any fixed $V_i\in R$, we have 
\begin{align*}
\mathbb{E}[d_R^+(V_i)] &= \sum_{V_j\in V(R)\backslash \{V_i\}} \frac{e_{G^\prime}(V_i,V_j)}{e_{G^\prime}(V_i,V_j)+e_{G^\prime}(V_j,V_i)} \geqslant \sum_{V_j\in V(R)\backslash \{V_i\}}\frac{e_{G^\prime}(V_i,V_j)}{|V_i||V_j|}\\
& \geqslant\frac{|R|}{|G||V_i|} \left(\sum_{v\in V_i}(d_{G^\prime}(v) - |V_0|-|V_i|)\right)\\
& \geqslant \left(\frac{e_G(V_i,V(G))}{|G||V_i|}- (d+2\varepsilon)\right)|R|.
\end{align*}
Then Chernoff Bound 1 implies that $\mathbb{P}[|d^+_R(V_i)-\mathbb{E}[d^+_R(V_i)]|\leqslant \varepsilon |R|/2 ]\leqslant e^{-\varepsilon |R|/12}$. By symmetry, we have $$\mathbb{E}[d_R^-(V_i)]\geqslant \left(\frac{e_G(V(G),V_i)}{|G||V_i|}- (d+2\varepsilon)\right)|R|,$$ and $\mathbb{P}[|d^-_R(V_i)-\mathbb{E}[d^-_R(V_i)]|\leqslant \varepsilon |R|/2 ]\leqslant e^{-\varepsilon |R|/12}$. Using the union bound,  there is a spanning oriented subgraph $R\subseteq R^\prime$ satisfying (\ref{EQU-orientreduced}) with positive probability. Moreover, the ``in particular'' part follows immediately  from (\ref{EQU-orientreduced}).

\end{document}